\documentclass{article}
\usepackage[margin=1in]{geometry}
\usepackage[utf8]{inputenc}
\usepackage{mathrsfs}
\usepackage{amsmath, amsfonts, amsthm, bm, varwidth, hyperref, xfrac, textgreek, mathtools, booktabs, tabularx, array, float, xcolor, mdframed, amssymb}
\usepackage{caption}
\usepackage{algorithm}
\usepackage[noend]{algpseudocode}
\usepackage{hyperref}
\usepackage{enumitem}
\usepackage{multirow}
\usepackage{siunitx}
\usepackage{scalerel}
\usepackage{graphicx}
\usepackage{pdflscape}
\usepackage{tikz}
\usetikzlibrary{backgrounds,matrix,fit,decorations.pathreplacing,calc,positioning,calligraphy}
\pgfsetlayers{main,background}
\usepackage{arydshln}
\usepackage{extarrows}
\usepackage[backend=biber,style=alphabetic,maxalphanames=5,maxnames=8]{biblatex}
\newcolumntype{C}{>{\raggedleft\arraybackslash}X} 
\let\originalleft\left
\let\originalright\right
\renewcommand{\left}{\mathopen{}\mathclose\bgroup\originalleft}
\renewcommand{\right}{\aftergroup\egroup\originalright}

\newtheorem{theorem}{Theorem}[section]
\newtheorem{lemma}[theorem]{Lemma}

\theoremstyle{definition}

\newtheorem{remark}[theorem]{Remark}
\newtheorem{example}[theorem]{Example}
\newtheorem{question}[theorem]{Question}
\newtheorem*{theorem*}{Theorem}
\newtheorem*{conjecture*}{Conjecture}

\newtheoremstyle{break}
  {}
  {}%
  {}
  {}%
  {\bfseries}
  {.}%
  {\newline}
  {}%
\theoremstyle{break}
\newtheorem{construction}[theorem]{CONSTRUCTION}
\BeforeBeginEnvironment{construction}{\begin{mdframed}}
\AfterEndEnvironment{construction}{\end{mdframed}}

\newcommand*{\horzbar}{\rule[.5ex]{2.9ex}{0.5pt}}

\DeclarePairedDelimiterX{\set}[1]{\{}{\}}{\setargs{#1}}
\NewDocumentCommand{\setargs}{>{\SplitArgument{1}{;}}m}
{\setargsaux#1}
\NewDocumentCommand{\setargsaux}{mm}
{\IfNoValueTF{#2}{#1} {#1\nonscript\:\delimsize\vert\allowbreak\nonscript\:\mathopen{}#2}}%
\allowdisplaybreaks

\algrenewcommand\textproc{}
\title{Hodge Structures in Sextic Fourfolds Equipped with an Involution}
\author{Benjamin E. \textsc{Diamond} \\
	\scriptsize \href{mailto:benediamond@gmail.com}{\texttt{benediamond@gmail.com}}
}
\date{}
\newif\ifdraftfullpagefigure
\draftfullpagefigurefalse

\newcommand{\fullpagefiguredummycontent}{%
  \begin{tikzpicture}[x=1in,y=1in]
\path[use as bounding box] (0,0) rectangle (11,8.5);

\tikzset{
  complex cell/.style={
    line width=0.5pt,
    fill=white,
    align=center,
    inner sep=4pt,
    minimum height=0.35in,
    minimum width=1.4in,
  },
  complex arrow/.style={->, line width=0.6pt},
  complex dots/.style={
    minimum height=0.35in,
    minimum width=0.7in,
  }
}
\def\topend{8.2}
\def\leftend{0.85}
\def\horizontal{1.71}
\def\vertical{1.45}
\def\truncation{2.45}

\newcommand{\complexcell}[5]{%
  \node[complex cell] (#1) at (#2,#3)
    {$\check{C}^{#4}\!\left(\mathscr{U}, \Omega_{\mathbb{P}^n}^{#5}(\log X)\right)$};%
}

\node [minimum height=0.35in, minimum width=1.4in]  (c1nphantom) at (\leftend+\horizontal, \topend) {};
\node [below=8pt] at (c1nphantom.south) (c1nphantombox) {$\phantom{\left\{ -\delta d\vartheta^{(0)} = (-1) \cdot d\omega^{(1)} \right\}}$};
\node [minimum height=0.35in, minimum width=1.4in]  (c1nmonephantom) at (\leftend+\horizontal, \topend-\vertical) {};
\node [below=8pt] at (c1nmonephantom.south) (c1nmonephantombox) {};
\draw[complex arrow,|->,shorten <=3pt,shorten >=3pt] ([xshift=15pt]c1nmonephantombox.north) -- ([xshift=15pt]c1nphantombox.south); 

\node [minimum height=0.35in, minimum width=1.4in] (c2nmonephantom) at (\leftend+2*\horizontal, \topend-\vertical) {};
\node [below=8pt] at (c2nmonephantom.south) (c2nmonephantombox) {$\phantom{\left\{ -\delta d\vartheta^{(1)} = (-1)^2 \cdot d\omega^{(2)} \right\}}$};
\node [minimum height=0.35in, minimum width=1.4in] (c2nmtwophantom) at (\leftend+2*\horizontal, \topend-2*\vertical) {};
\node [below=8pt] at (c2nmtwophantom.south) (c2nmtwophantombox) {};
\draw[complex arrow,|->,shorten <=3pt,shorten >=3pt] ([xshift=15pt]c2nmtwophantombox.north) -- ([xshift=15pt]c2nmonephantombox.south); 

\node [minimum height=0.35in, minimum width=1.4in] (cqmponeppthreephantom) at (\leftend+\truncation*\horizontal+\horizontal, \topend-\truncation*\vertical) {};
\node [below=8pt] at (cqmponeppthreephantom.south) (cqmponeppthreephantombox) {$\phantom{\left\{-\delta d\vartheta^{(q - 2)} = (-1)^{q - 1} \cdot d\omega^{(q - 1)} \right\}}$};
\node [minimum height=0.35in, minimum width=1.4in] (cqmponepptwophantom) at (\leftend+\truncation*\horizontal+\horizontal, \topend-\truncation*\vertical-\vertical) {};
\node [below=8pt] at (cqmponepptwophantom.south) (cqmponepptwophantombox) {};
\draw[complex arrow,|->,shorten <=3pt,shorten >=3pt] ([xshift=15pt]cqmponepptwophantombox.north) -- ([xshift=15pt]cqmponeppthreephantombox.south); 

\node [minimum height=0.35in, minimum width=1.4in] (cqpptwophantom) at (\leftend+\truncation*\horizontal+2*\horizontal, \topend-\truncation*\vertical-\vertical) {};
\node [below=8pt] at (cqpptwophantom.south) (cqpptwophantombox) {$\phantom{\left\{ -\delta d \vartheta^{(q - 1)} = (-1)^q \cdot d\omega^{(q)} \right\}}$};
\node [minimum height=0.35in, minimum width=1.4in] (cqpponephantom) at (\leftend+\truncation*\horizontal+2*\horizontal, \topend-\truncation*\vertical-2*\vertical) {};
\node [below=8pt] at (cqpponephantom.south) (cqpponephantombox) {};
\draw[complex arrow,|->,shorten <=3pt,shorten >=3pt] ([xshift=15pt]cqpponephantombox.north) -- ([xshift=15pt]cqpptwophantombox.south); 

\complexcell{c0n}{\leftend}{\topend}{0}{n}
\complexcell{c1n}{\leftend+\horizontal}{\topend}{1}{n}
\complexcell{c0nmone}{\leftend}{\topend-\vertical}{0}{n - 1}
\complexcell{c1nmone}{\leftend+\horizontal}{\topend-\vertical}{1}{n - 1}
\complexcell{c0nmtwo}{\leftend}{\topend-2*\vertical}{0}{n - 2}

\complexcell{c2nmone}{\leftend+2*\horizontal}{\topend-\vertical}{2}{n - 1}
\complexcell{c1nmtwo}{\leftend+\horizontal}{\topend-2*\vertical}{1}{n - 2}
\complexcell{c2nmtwo}{\leftend+2*\horizontal}{\topend-2*\vertical}{2}{n - 2}

\node[complex dots] (toptruncate) at (\leftend+2*\horizontal, \topend) {$\cdots$};
\node[complex dots] (top2truncate) at (\leftend+3*\horizontal, \topend-\vertical) {$\cdots$};
\node[complex dots] (lefttruncate) at (\leftend, \topend-3*\vertical) {$\vdots$};
\node[complex dots] (left2truncate) at (\leftend + \horizontal, \topend-3*\vertical) {$\vdots$};
\node[complex dots] (left3truncate) at (\leftend + 2*\horizontal, \topend-3*\vertical) {$\vdots$};

\complexcell{cqmponeppthree}{\leftend+\truncation*\horizontal+\horizontal}{\topend-\truncation*\vertical}{q - 1}{p + 3}
\complexcell{cqmponepptwo}{\leftend+\truncation*\horizontal+\horizontal}{\topend-\truncation*\vertical-\vertical}{q - 1}{p + 2}
\complexcell{cqmponeppone}{\leftend+\truncation*\horizontal+\horizontal}{\topend-\truncation*\vertical-2*\vertical}{q - 1}{p + 1}
\complexcell{cqpptwo}{\leftend+\truncation*\horizontal+2*\horizontal}{\topend-\truncation*\vertical-\vertical}{q}{p + 2}
\complexcell{cqppone}{\leftend+\truncation*\horizontal+2*\horizontal}{\topend-\truncation*\vertical-2*\vertical}{q}{p + 1}

\node[complex dots] (lasttruncate) at (\leftend+\truncation*\horizontal+2*\horizontal, \topend-\truncation*\vertical) {$\cdots$};
\node[complex dots] (last2truncate) at (\leftend+\truncation*\horizontal+3*\horizontal, \topend-\truncation*\vertical-\vertical) {$\cdots$};
\node[complex dots] (last3truncate) at (\leftend+\truncation*\horizontal+3*\horizontal, \topend-\truncation*\vertical-2*\vertical) {$\cdots$};

\node[complex dots] (beforetruncate) at (\leftend+\truncation*\horizontal, \topend-\truncation*\vertical) {$\cdots$};
\node[complex dots] (before2truncate) at (\leftend+\truncation*\horizontal, \topend-\truncation*\vertical-\vertical) {$\cdots$};
\node[complex dots] (before3truncate) at (\leftend+\truncation*\horizontal, \topend-\truncation*\vertical-2*\vertical) {$\cdots$};

\node[complex dots] (undertruncate) at (\leftend+\truncation*\horizontal+\horizontal, \topend-\truncation*\vertical-3*\vertical) {$\vdots$};
\node[complex dots] (under2truncate) at (\leftend+\truncation*\horizontal+2*\horizontal, \topend-\truncation*\vertical-3*\vertical) {$\vdots$};

\draw[complex arrow] (cqmponeppthree.east) -- (lasttruncate.west) node[midway, above] {$\delta$};
\draw[complex arrow] (cqpptwo.east) -- (last2truncate.west) node[midway, above] {$\delta$};
\draw[complex arrow] (cqppone.east) -- (last3truncate.west) node[midway, above] {$\delta$};

\draw[complex arrow] (beforetruncate) -- (cqmponeppthree.west) node[midway, above] {$\delta$};
\draw[complex arrow] (before2truncate) -- (cqmponepptwo.west) node[midway, above] {$\delta$};
\draw[complex arrow] (before3truncate) -- (cqmponeppone.west) node[midway, above] {$\delta$};

\draw[complex arrow] (undertruncate) -- (cqmponeppone.south); 
\draw[complex arrow] (under2truncate) -- (cqppone.south); 

\draw[complex arrow] (c0n.east) -- (c1n.west) node[midway, above] {$\delta$};
\draw[complex arrow] (c0nmone.east) -- (c1nmone.west) node[midway, above] {$\delta$};
\draw[complex arrow] (c0nmtwo.east) -- (c1nmtwo.west) node[midway, above] {$\delta$};
\draw[complex arrow] (c1nmone.east) -- (c2nmone.west) node[midway, above] {$\delta$};
\draw[complex arrow] (c1nmtwo.east) -- (c2nmtwo.west) node[midway, above] {$\delta$};

\draw[complex arrow] (c0nmone.north) -- (c0n.south); 
\draw[complex arrow] (c0nmtwo.north) -- (c0nmone.south); 
\draw[complex arrow] (c1nmone.north) -- (c1n.south); 
\draw[complex arrow] (c1nmtwo.north) -- (c1nmone.south); 
\draw[complex arrow] (c2nmtwo.north) -- (c2nmone.south); 

\draw[complex arrow] (c1n.east) -- (toptruncate.west) node[midway, above] {$\delta$};
\draw[complex arrow] (c2nmone.east) -- (top2truncate.west) node[midway, above] {$\delta$};
\draw[complex arrow] (lefttruncate.north) -- (c0nmtwo.south); 
\draw[complex arrow] (left2truncate.north) -- (c1nmtwo.south); 
\draw[complex arrow] (left3truncate.north) -- (c2nmtwo.south); 

\draw[complex arrow] (cqmponepptwo.north) -- (cqmponeppthree.south); 
\draw[complex arrow] (cqmponeppone.north) -- (cqmponepptwo.south); 
\draw[complex arrow] (cqmponepptwo.east) -- (cqpptwo.west) node[midway, above] {$\delta$};
\draw[complex arrow] (cqmponeppone.east) -- (cqppone.west) node[midway, above] {$\delta$};
\draw[complex arrow] (cqppone.north) -- (cqpptwo.south); 

\node [below=8pt,fill=white,draw,dashed] at (c0n.south) (c0nbox) {$\left\{ \omega^{(0)} - \partial\vartheta^{(0)} \right\}$};
\node [below=8pt,fill=white,draw,dashed] at (c1n.south) (c1nbox) {$\left\{ -\delta \partial\vartheta^{(0)} = (-1)^1 \cdot \partial\omega^{(1)} \right\}$};
\node [below=8pt,fill=white,draw,dashed] at (c1nmone.south) (c1nmonebox) {$\left\{ \omega^{(1)} - \partial\vartheta^{(1)} \right\}$};
\node [below=8pt,fill=white,draw,dashed] at (c0nmone.south) (c0nmonebox) {$\left\{ \beta^{(0)} - \vartheta^{(0)} - \partial\zeta^{(0)} \right\}$};
\node [below=8pt,fill=white,draw,dashed] at (c0nmtwo.south) (c0nmtwobox) {$\left\{ \zeta^{(0)} \right\}$};
\node [below=8pt,fill=white,draw,dashed] at (c1nmtwo.south) (c1nmtwobox) {$\left\{ \beta^{(1)} - (-1)^1 \cdot \vartheta^{(1)} - \partial\zeta^{(1)} \right\}$};
\node [below=8pt,fill=white,draw,dashed] at (c2nmone.south) (c2nmonebox) {$\left\{ -\delta \partial\vartheta^{(1)} = (-1)^2 \cdot \partial\omega^{(2)} \right\}$};
\node [below=8pt,fill=white,draw,dashed] at (c2nmtwo.south) (c2nmtwobox) {$\left\{ \omega^{(2)} - \partial\vartheta^{(2)}\right\}$};

\node [below=8pt,fill=white,draw,dashed] at (cqmponepptwo.south) (cqmponepptwobox) {$\left\{ \omega^{(q - 1)} - \partial\vartheta^{(q - 1)} \right\}$};
\node [below=8pt,fill=white,draw,dashed] at (cqmponeppthree.south) (cqmponeppthreebox) {$\left\{-\delta \partial\vartheta^{(q - 2)} = (-1)^{q - 1} \cdot \partial\omega^{(q - 1)} \right\}$};
\node [below=8pt,fill=white,draw,dashed] at (cqppone.south) (cqppnebox) {$\left\{ \omega^{(q)} \right\}$};
\node [below=8pt,fill=white,draw,dashed] at (cqpptwo.south) (cqpptwobox) {$\left\{ -\delta \partial \vartheta^{(q - 1)} = (-1)^q \cdot \partial\omega^{(q)} \right\}$};
\node [below=8pt,fill=white,draw,dashed] at (cqmponeppone.south) (cqmponepponebox) {$\left\{ \beta^{(q - 1)} - (-1)^{q - 1} \cdot \vartheta^{(q - 1)} \right\}$};

\draw[complex arrow,|->,shorten <=3pt,shorten >=3pt] (c0nbox.east) -- (c1nbox.west); 
\draw[complex arrow,|->,shorten <=3pt,shorten >=3pt] (c1nmonebox.east) -- (c2nmonebox.west); 
\draw[complex arrow,|->,shorten <=3pt,shorten >=3pt] (cqmponepptwobox.east) -- (cqpptwobox.west); 

\node [minimum height=0.35in, minimum width=1.4in] (cqponeppone) at (\leftend+\truncation*\horizontal+3*\horizontal, \topend-\truncation*\vertical-2*\vertical) {};
\node [minimum height=0.35in, minimum width=1.4in] (cqponep) at (\leftend+\truncation*\horizontal+3*\horizontal, \topend-\truncation*\vertical-3*\vertical) {};
\node [minimum height=0.35in, minimum width=1.4in] (cqp) at (\leftend+\truncation*\horizontal+2*\horizontal, \topend-\truncation*\vertical-3*\vertical) {};

\definecolor{mycolor}{rgb}{0.2, 1, 0.4}
    \begin{pgfonlayer}{background}
        \fill[opacity=0.1,blue,rounded corners=5pt,draw=black,draw opacity=0.6,dashed,thick] (c0n.south west) -- (c0n.north west) -- (c0n.north east) -- (cqponep.north east) -- (cqponep.south east) -- (cqponep.south west) -- cycle;
        \fill[opacity=0.1,mycolor,rounded corners=5pt,draw=black,draw opacity=0.6,dashed,thick] (c0nmone.south west) -- (c0n.south west) -- (cqponep.south west) -- (cqp.south west) -- cycle;
        \fill[opacity=0.1,cyan,rounded corners=5pt,draw=black,draw opacity=0.6,dashed,thick] (c0n.north east) -- (c1n.north east) -- (cqponeppone.north east) -- (cqponep.north east) -- cycle;
    \end{pgfonlayer}

  \end{tikzpicture}%
}

\newcommand{\insertfullpagefiguredummy}{%
  \ifdim\pagetotal>0pt
    \clearpage
  \fi
  \thispagestyle{empty}%
  \null
  \begin{tikzpicture}[remember picture, overlay]
    \ifdraftfullpagefigure
      \node[anchor=center] at (current page.center) {%
  \begin{tikzpicture}[x=1in,y=1in]
\path[use as bounding box] (0,0) rectangle (11,8.5);

\tikzset{
  complex cell/.style={
    line width=0.5pt,
    fill=white,
    align=center,
    inner sep=4pt,
    minimum height=0.35in,
    minimum width=1.4in,
  },
  complex arrow/.style={->, line width=0.6pt},
  complex dots/.style={
    minimum height=0.35in,
    minimum width=0.7in,
  }
}
\def\topend{8.2}
\def\leftend{0.85}
\def\horizontal{1.71}
\def\vertical{1.45}
\def\truncation{2.45}

\newcommand{\complexcell}[5]{%
  \node[complex cell] (#1) at (#2,#3)
    {$\check{C}^{#4}\!\left(\mathscr{U}, \Omega_{\mathbb{P}^n}^{#5}(\log X)\right)$};%
}

\node [minimum height=0.35in, minimum width=1.4in]  (c1nphantom) at (\leftend+\horizontal, \topend) {};
\node [below=8pt] at (c1nphantom.south) (c1nphantombox) {$\phantom{\left\{ -\delta d\vartheta^{(0)} = (-1) \cdot d\omega^{(1)} \right\}}$};
\node [minimum height=0.35in, minimum width=1.4in]  (c1nmonephantom) at (\leftend+\horizontal, \topend-\vertical) {};
\node [below=8pt] at (c1nmonephantom.south) (c1nmonephantombox) {};
\draw[complex arrow,|->,shorten <=3pt,shorten >=3pt] ([xshift=15pt]c1nmonephantombox.north) -- ([xshift=15pt]c1nphantombox.south); 

\node [minimum height=0.35in, minimum width=1.4in] (c2nmonephantom) at (\leftend+2*\horizontal, \topend-\vertical) {};
\node [below=8pt] at (c2nmonephantom.south) (c2nmonephantombox) {$\phantom{\left\{ -\delta d\vartheta^{(1)} = (-1)^2 \cdot d\omega^{(2)} \right\}}$};
\node [minimum height=0.35in, minimum width=1.4in] (c2nmtwophantom) at (\leftend+2*\horizontal, \topend-2*\vertical) {};
\node [below=8pt] at (c2nmtwophantom.south) (c2nmtwophantombox) {};
\draw[complex arrow,|->,shorten <=3pt,shorten >=3pt] ([xshift=15pt]c2nmtwophantombox.north) -- ([xshift=15pt]c2nmonephantombox.south); 

\node [minimum height=0.35in, minimum width=1.4in] (cqmponeppthreephantom) at (\leftend+\truncation*\horizontal+\horizontal, \topend-\truncation*\vertical) {};
\node [below=8pt] at (cqmponeppthreephantom.south) (cqmponeppthreephantombox) {$\phantom{\left\{-\delta d\vartheta^{(q - 2)} = (-1)^{q - 1} \cdot d\omega^{(q - 1)} \right\}}$};
\node [minimum height=0.35in, minimum width=1.4in] (cqmponepptwophantom) at (\leftend+\truncation*\horizontal+\horizontal, \topend-\truncation*\vertical-\vertical) {};
\node [below=8pt] at (cqmponepptwophantom.south) (cqmponepptwophantombox) {};
\draw[complex arrow,|->,shorten <=3pt,shorten >=3pt] ([xshift=15pt]cqmponepptwophantombox.north) -- ([xshift=15pt]cqmponeppthreephantombox.south); 

\node [minimum height=0.35in, minimum width=1.4in] (cqpptwophantom) at (\leftend+\truncation*\horizontal+2*\horizontal, \topend-\truncation*\vertical-\vertical) {};
\node [below=8pt] at (cqpptwophantom.south) (cqpptwophantombox) {$\phantom{\left\{ -\delta d \vartheta^{(q - 1)} = (-1)^q \cdot d\omega^{(q)} \right\}}$};
\node [minimum height=0.35in, minimum width=1.4in] (cqpponephantom) at (\leftend+\truncation*\horizontal+2*\horizontal, \topend-\truncation*\vertical-2*\vertical) {};
\node [below=8pt] at (cqpponephantom.south) (cqpponephantombox) {};
\draw[complex arrow,|->,shorten <=3pt,shorten >=3pt] ([xshift=15pt]cqpponephantombox.north) -- ([xshift=15pt]cqpptwophantombox.south); 

\complexcell{c0n}{\leftend}{\topend}{0}{n}
\complexcell{c1n}{\leftend+\horizontal}{\topend}{1}{n}
\complexcell{c0nmone}{\leftend}{\topend-\vertical}{0}{n - 1}
\complexcell{c1nmone}{\leftend+\horizontal}{\topend-\vertical}{1}{n - 1}
\complexcell{c0nmtwo}{\leftend}{\topend-2*\vertical}{0}{n - 2}

\complexcell{c2nmone}{\leftend+2*\horizontal}{\topend-\vertical}{2}{n - 1}
\complexcell{c1nmtwo}{\leftend+\horizontal}{\topend-2*\vertical}{1}{n - 2}
\complexcell{c2nmtwo}{\leftend+2*\horizontal}{\topend-2*\vertical}{2}{n - 2}

\node[complex dots] (toptruncate) at (\leftend+2*\horizontal, \topend) {$\cdots$};
\node[complex dots] (top2truncate) at (\leftend+3*\horizontal, \topend-\vertical) {$\cdots$};
\node[complex dots] (lefttruncate) at (\leftend, \topend-3*\vertical) {$\vdots$};
\node[complex dots] (left2truncate) at (\leftend + \horizontal, \topend-3*\vertical) {$\vdots$};
\node[complex dots] (left3truncate) at (\leftend + 2*\horizontal, \topend-3*\vertical) {$\vdots$};

\complexcell{cqmponeppthree}{\leftend+\truncation*\horizontal+\horizontal}{\topend-\truncation*\vertical}{q - 1}{p + 3}
\complexcell{cqmponepptwo}{\leftend+\truncation*\horizontal+\horizontal}{\topend-\truncation*\vertical-\vertical}{q - 1}{p + 2}
\complexcell{cqmponeppone}{\leftend+\truncation*\horizontal+\horizontal}{\topend-\truncation*\vertical-2*\vertical}{q - 1}{p + 1}
\complexcell{cqpptwo}{\leftend+\truncation*\horizontal+2*\horizontal}{\topend-\truncation*\vertical-\vertical}{q}{p + 2}
\complexcell{cqppone}{\leftend+\truncation*\horizontal+2*\horizontal}{\topend-\truncation*\vertical-2*\vertical}{q}{p + 1}

\node[complex dots] (lasttruncate) at (\leftend+\truncation*\horizontal+2*\horizontal, \topend-\truncation*\vertical) {$\cdots$};
\node[complex dots] (last2truncate) at (\leftend+\truncation*\horizontal+3*\horizontal, \topend-\truncation*\vertical-\vertical) {$\cdots$};
\node[complex dots] (last3truncate) at (\leftend+\truncation*\horizontal+3*\horizontal, \topend-\truncation*\vertical-2*\vertical) {$\cdots$};

\node[complex dots] (beforetruncate) at (\leftend+\truncation*\horizontal, \topend-\truncation*\vertical) {$\cdots$};
\node[complex dots] (before2truncate) at (\leftend+\truncation*\horizontal, \topend-\truncation*\vertical-\vertical) {$\cdots$};
\node[complex dots] (before3truncate) at (\leftend+\truncation*\horizontal, \topend-\truncation*\vertical-2*\vertical) {$\cdots$};

\node[complex dots] (undertruncate) at (\leftend+\truncation*\horizontal+\horizontal, \topend-\truncation*\vertical-3*\vertical) {$\vdots$};
\node[complex dots] (under2truncate) at (\leftend+\truncation*\horizontal+2*\horizontal, \topend-\truncation*\vertical-3*\vertical) {$\vdots$};

\draw[complex arrow] (cqmponeppthree.east) -- (lasttruncate.west) node[midway, above] {$\delta$};
\draw[complex arrow] (cqpptwo.east) -- (last2truncate.west) node[midway, above] {$\delta$};
\draw[complex arrow] (cqppone.east) -- (last3truncate.west) node[midway, above] {$\delta$};

\draw[complex arrow] (beforetruncate) -- (cqmponeppthree.west) node[midway, above] {$\delta$};
\draw[complex arrow] (before2truncate) -- (cqmponepptwo.west) node[midway, above] {$\delta$};
\draw[complex arrow] (before3truncate) -- (cqmponeppone.west) node[midway, above] {$\delta$};

\draw[complex arrow] (undertruncate) -- (cqmponeppone.south); 
\draw[complex arrow] (under2truncate) -- (cqppone.south); 

\draw[complex arrow] (c0n.east) -- (c1n.west) node[midway, above] {$\delta$};
\draw[complex arrow] (c0nmone.east) -- (c1nmone.west) node[midway, above] {$\delta$};
\draw[complex arrow] (c0nmtwo.east) -- (c1nmtwo.west) node[midway, above] {$\delta$};
\draw[complex arrow] (c1nmone.east) -- (c2nmone.west) node[midway, above] {$\delta$};
\draw[complex arrow] (c1nmtwo.east) -- (c2nmtwo.west) node[midway, above] {$\delta$};

\draw[complex arrow] (c0nmone.north) -- (c0n.south); 
\draw[complex arrow] (c0nmtwo.north) -- (c0nmone.south); 
\draw[complex arrow] (c1nmone.north) -- (c1n.south); 
\draw[complex arrow] (c1nmtwo.north) -- (c1nmone.south); 
\draw[complex arrow] (c2nmtwo.north) -- (c2nmone.south); 

\draw[complex arrow] (c1n.east) -- (toptruncate.west) node[midway, above] {$\delta$};
\draw[complex arrow] (c2nmone.east) -- (top2truncate.west) node[midway, above] {$\delta$};
\draw[complex arrow] (lefttruncate.north) -- (c0nmtwo.south); 
\draw[complex arrow] (left2truncate.north) -- (c1nmtwo.south); 
\draw[complex arrow] (left3truncate.north) -- (c2nmtwo.south); 

\draw[complex arrow] (cqmponepptwo.north) -- (cqmponeppthree.south); 
\draw[complex arrow] (cqmponeppone.north) -- (cqmponepptwo.south); 
\draw[complex arrow] (cqmponepptwo.east) -- (cqpptwo.west) node[midway, above] {$\delta$};
\draw[complex arrow] (cqmponeppone.east) -- (cqppone.west) node[midway, above] {$\delta$};
\draw[complex arrow] (cqppone.north) -- (cqpptwo.south); 

\node [below=8pt,fill=white,draw,dashed] at (c0n.south) (c0nbox) {$\left\{ \omega^{(0)} - \partial\vartheta^{(0)} \right\}$};
\node [below=8pt,fill=white,draw,dashed] at (c1n.south) (c1nbox) {$\left\{ -\delta \partial\vartheta^{(0)} = (-1)^1 \cdot \partial\omega^{(1)} \right\}$};
\node [below=8pt,fill=white,draw,dashed] at (c1nmone.south) (c1nmonebox) {$\left\{ \omega^{(1)} - \partial\vartheta^{(1)} \right\}$};
\node [below=8pt,fill=white,draw,dashed] at (c0nmone.south) (c0nmonebox) {$\left\{ \beta^{(0)} - \vartheta^{(0)} - \partial\zeta^{(0)} \right\}$};
\node [below=8pt,fill=white,draw,dashed] at (c0nmtwo.south) (c0nmtwobox) {$\left\{ \zeta^{(0)} \right\}$};
\node [below=8pt,fill=white,draw,dashed] at (c1nmtwo.south) (c1nmtwobox) {$\left\{ \beta^{(1)} - (-1)^1 \cdot \vartheta^{(1)} - \partial\zeta^{(1)} \right\}$};
\node [below=8pt,fill=white,draw,dashed] at (c2nmone.south) (c2nmonebox) {$\left\{ -\delta \partial\vartheta^{(1)} = (-1)^2 \cdot \partial\omega^{(2)} \right\}$};
\node [below=8pt,fill=white,draw,dashed] at (c2nmtwo.south) (c2nmtwobox) {$\left\{ \omega^{(2)} - \partial\vartheta^{(2)}\right\}$};

\node [below=8pt,fill=white,draw,dashed] at (cqmponepptwo.south) (cqmponepptwobox) {$\left\{ \omega^{(q - 1)} - \partial\vartheta^{(q - 1)} \right\}$};
\node [below=8pt,fill=white,draw,dashed] at (cqmponeppthree.south) (cqmponeppthreebox) {$\left\{-\delta \partial\vartheta^{(q - 2)} = (-1)^{q - 1} \cdot \partial\omega^{(q - 1)} \right\}$};
\node [below=8pt,fill=white,draw,dashed] at (cqppone.south) (cqppnebox) {$\left\{ \omega^{(q)} \right\}$};
\node [below=8pt,fill=white,draw,dashed] at (cqpptwo.south) (cqpptwobox) {$\left\{ -\delta \partial \vartheta^{(q - 1)} = (-1)^q \cdot \partial\omega^{(q)} \right\}$};
\node [below=8pt,fill=white,draw,dashed] at (cqmponeppone.south) (cqmponepponebox) {$\left\{ \beta^{(q - 1)} - (-1)^{q - 1} \cdot \vartheta^{(q - 1)} \right\}$};

\draw[complex arrow,|->,shorten <=3pt,shorten >=3pt] (c0nbox.east) -- (c1nbox.west); 
\draw[complex arrow,|->,shorten <=3pt,shorten >=3pt] (c1nmonebox.east) -- (c2nmonebox.west); 
\draw[complex arrow,|->,shorten <=3pt,shorten >=3pt] (cqmponepptwobox.east) -- (cqpptwobox.west); 

\node [minimum height=0.35in, minimum width=1.4in] (cqponeppone) at (\leftend+\truncation*\horizontal+3*\horizontal, \topend-\truncation*\vertical-2*\vertical) {};
\node [minimum height=0.35in, minimum width=1.4in] (cqponep) at (\leftend+\truncation*\horizontal+3*\horizontal, \topend-\truncation*\vertical-3*\vertical) {};
\node [minimum height=0.35in, minimum width=1.4in] (cqp) at (\leftend+\truncation*\horizontal+2*\horizontal, \topend-\truncation*\vertical-3*\vertical) {};

\definecolor{mycolor}{rgb}{0.2, 1, 0.4}
    \begin{pgfonlayer}{background}
        \fill[opacity=0.1,blue,rounded corners=5pt,draw=black,draw opacity=0.6,dashed,thick] (c0n.south west) -- (c0n.north west) -- (c0n.north east) -- (cqponep.north east) -- (cqponep.south east) -- (cqponep.south west) -- cycle;
        \fill[opacity=0.1,mycolor,rounded corners=5pt,draw=black,draw opacity=0.6,dashed,thick] (c0nmone.south west) -- (c0n.south west) -- (cqponep.south west) -- (cqp.south west) -- cycle;
        \fill[opacity=0.1,cyan,rounded corners=5pt,draw=black,draw opacity=0.6,dashed,thick] (c0n.north east) -- (c1n.north east) -- (cqponeppone.north east) -- (cqponep.north east) -- cycle;
    \end{pgfonlayer}

  \end{tikzpicture}%
};
    \else
      \node[anchor=center] at (current page.center) {\rotatebox{-90}{%
  \begin{tikzpicture}[x=1in,y=1in]
    
  \end{tikzpicture}%
}};
    \fi
  \end{tikzpicture}%
  \clearpage
}

\begin{document}
\maketitle

\begin{abstract}
To each ternary sextic $f(X_0, X_1, X_2)$ whose associated plane curve is smooth, the \textit{Shioda construction} attaches a smooth sextic fourfold $X \subset \mathbb{P}^5$ whose defining equation $f(X_0, X_1, X_2) - f(Y_0, Y_1, Y_2)$ is fixed under the involution $\iota : (X_0, X_1, X_2, Y_0, Y_1, Y_2) \mapsto i \cdot (Y_0, Y_1, Y_2, -X_0, -X_1, -X_2)$. The induced action $\iota^* : H^4(X, \mathbb{Q}) \to H^4(X, \mathbb{Q})$ 
fixes a Hodge substructure $H \subset H^4(X, \mathbb{Q})$ whose \textit{Hodge coniveau} is 1. By the general Hodge conjecture, we expect that there should exist a divisor $Y \subset X$ for which $H \subset \ker\left( H^4(X, \mathbb{Q}) \to H^4(X \setminus Y, \mathbb{Q}) \right)$. We verify this prediction in case the Waring rank of $f(X_0, X_1, X_2)$ takes on its minimum possible value, partially answering a question of Voisin (J.\ Math.\ Sci.\ Univ.\ Tokyo '15).
\end{abstract}

\section{Introduction}

For each smooth sextic fourfold $X \subset \mathbb{P}^5$ whose defining equation $F(X_0, X_1, X_2, Y_0, Y_1, Y_2)$ is invariant under the projective linear involution $\iota : (X_0, X_1, X_2, Y_0, Y_1, Y_2) \mapsto i \cdot (Y_0, Y_1, Y_2, -X_0, -X_1, -X_2)$, the induced action $\iota^* : H^4(X, \mathbb{C}) \to H^4(X, \mathbb{C})$ acts as $-1$ on $H^{4, 0}(X)$. The Hodge substructure $H^4(X, \mathbb{Q})^+ \subset H^4(X, \mathbb{Q})$ consisting of \textit{invariant} classes is thus of \textit{Hodge coniveau} 1, and so is expected to be of \textit{geometric coniveau} 1.

Voisin \cite{Voisin:2015aa} isolates precisely this ``particularly interesting'' special case of the generalized Hodge conjecture for the sake of its relationship to the generalized Bloch conjecture. Indeed, for $f(X_0, X_1, X_2)$ defining a smooth plane sextic $C \subset \mathbb{P}^2$, the smooth fourfold $X \subset \mathbb{P}^5$ defined by $f(X_0, X_1, X_2) - f(Y_0, Y_1, Y_2)$ is invariant under the involution $\iota$ (see also Shioda and Katsura \cite[Rem.~1.10]{Shioda:1979aa}). If the invariant Hodge substructure $H^4(X, \mathbb{Q})^+ \subset H^4(X, \mathbb{Q})$ were \textit{parameterized by 1-cycles} in the sense of \cite[Def.~0.3]{Voisin:2015aa}---this notion, due to Vial \cite{Vial:2013aa}, strengthens somewhat that of \textit{geometric coniveau 1}---then so too would Voisin's \cite[Ques.~3.4]{Voisin:2015aa} be true; this question in turn captures a special case of the generalized Bloch conjecture, pertaining to the self-product $S \times S$ of a smooth K3 surface $S$ that's a double cover of $\mathbb{P}^2$ ramified over $C$.

Though the notions ``parameterized by 1-cycles'' and ``geometric coniveau 1'' are equivalent for \textit{general} sextic hypersurfaces $X \subset \mathbb{P}^5$ (as Voisin notes \cite[Rem.~0.5]{Voisin:2015aa}), their equivalence for \textit{all} smooth sextics apparently remains unknown, though it would be implied by the Lefschetz standard conjecture \cite{Voisin:2015aa}.

In any case, in this work, we unconditionally confirm the prediction of the generalized Hodge conjecture as regards $H^4(X, \mathbb{Q})^+ \subset H^4(X, \mathbb{Q})$, for a particularly simple family of sextics $F(X_0, X_1, X_2, Y_0, Y_1, Y_2) = f(X_0, X_1, X_2) - f(Y_0, Y_1, Y_2)$. The \textit{Waring rank} of a ternary sextic $f(X_0, X_1, X_2)$ is the minimal integer $s$ for which $f(X_0, X_1, X_2)$ admits an expression of the shape
\begin{equation*}f(X_0, X_1, X_2) = L_1(X_0, X_1, X_2)^6 + \cdots + L_s(X_0, X_1, X_2)^6,\end{equation*}
for $L_1(X_0, X_1, X_2), \ldots , L_s(X_0, X_1, X_2)$ linear forms. (This notion can of course be defined for arbitrary numbers of variables and arbitrary degrees.) This notion is classical and quite difficult, and is surveyed at length in Iarrobino and Kanev \cite{Iarrobino:1999aa}. If the curve $C \subset \mathbb{P}^2$ defined by $f(X_0, X_1, X_2)$ is \textit{smooth}, then the Waring rank of $f(X_0, X_1, X_2)$ is trivially \textit{at least} 3. On the other hand, for dimension reasons, no fewer than 10 forms can express a \textit{general} ternary sextic, and in fact 10 forms suffice \cite[Cor.~1.62]{Iarrobino:1999aa}. The worst-case number of forms is larger---at least 12 (see De Paris \cite{De-Paris:2016aa})---and the exact maximum rank is not known.

The techniques of this work serve to settle the above case of the generalized Hodge conjecture in the Waring-rank-3 case. This case represents a projective 8-dimensional family of smooth curves $C \subset \mathbb{P}^2$ within the 27-dimensional set of all such curves. In fact, our techniques are enough to handle, at the very least, 17 projective dimensions within the 235-dimensional space of all \textit{invariant} sextics $X \subset \mathbb{P}^5$; that they might be pushed further seems plausible. (Only 8 of our 17 dimensions fall under the Shioda--Katsura construction.) 

We now survey our approach.
In the first part of the paper, we reduce the Hodge-theoretic statement at issue to a purely algebraic one, essentially by closely studying Griffiths's \cite{Griffiths:1969aa} classical description of the primitive cohomology of hypersurfaces. We fix for now an arbitrary smooth hypersurface $X \subset \mathbb{P}^n$ of degree $d$ and a pair $p + q = n - 1$. Voisin \cite[Thm.~6.1]{Voisin:2003aa} asserts a variant of Griffiths's theorem whereby the Poincaré residues $\operatorname{Res}_X (\omega_A)$, where $A$ is homogeneous of degree $d \cdot (q + 1) - (n + 1)$ and $\omega_A \coloneqq \frac{A \cdot \Omega}{F^{q + 1}}$ is its \textit{Griffiths form}, exhaust the Hodge level $F^p H^{n - 1}(X, \mathbb{C})_{\text{prim}}$. 
It follows from \cite[Thm.~6.1]{Voisin:2003aa} (see also \cite[Thm.~6.5]{Voisin:2003aa}) that the class in $H^{n - 1}(X, \mathbb{C})$ associated with $A$ vanishes on $X \cap U$---where $U \subset \mathbb{P}^n$, say, is an arbitrary principal open whose intersection with $X$ is nonempty---if and only if there exists a holomorphic differential $n - 1$-form $\beta \in \Gamma(U \setminus X, \Omega_{\mathbb{P}^n}^{n-1})$ such that $\left. \omega_A\right|_{U} - \partial \beta$ extends holomorphically across $X$.

In Section \ref{basic} below, we work out a new, elementary proof of this result (see Theorem \ref{main}). Our proof boasts the advantage over Voisin's \cite[Thm.~6.5]{Voisin:2003aa} whereby it follows purely formally from, say, Lewis's \cite[Lec.~9]{Lewis:1999aa} elementary rendition of Griffiths's method; in particular, it doesn't depend on \textit{Deligne's theorem} to the effect that the spectral sequence associated to the Hodge filtration on the logarithmic complex $\Omega^\bullet_{\mathbb{P}^n}\left( \log X \right)$ degenerates at $E_1$ (see Remark \ref{remark} for further discussion of this). Our proof further clarifies the matter of $\beta$'s pole order along $X$. In fact, it's at most $q$; that is, 
$\beta$ can be taken from $\Gamma\left(U, \Omega^{n - 1}_{\mathbb{P}^n}(qX) \right)$, if it can be taken at all. We show as much \textit{without} invoking Deligne's theorem or the Grothendieck comparison theorem; upon admitting the latter, we further deduce that $\beta$ can be assumed rational (see Theorem \ref{grothendieck}). Our approach is Čech-cohomological and algorithmic in flavor, and essentially enriches that of Lewis \cite[Lec.~9]{Lewis:1999aa}.

Using these results, we reëxpress the prediction issued by the generalized Hodge conjecture as regards $H^4(X, \mathbb{Q})^+ \subset H^4(X, \mathbb{Q})$ as the following \textit{equivalent}---and purely algebraic---condition. That is, our question becomes whether, for $F(X_0, X_1, X_2, Y_0, Y_1, Y_2)$ $\iota$-invariant, for each $q \in \{0, \ldots , 4\}$ and each $\iota$-\textit{anti-invariant} polynomial $A \in R^{6 \cdot q}$, a rational vector field $v = \sum_{i = 0}^5 v_i \cdot \frac{\partial}{\partial Z_i}$---whose components are homogeneous rational functions of degree $6 \cdot q - 5$, with denominators not contained in $(F)$---can be chosen in such a way that
\begin{equation}\label{key}F^{q + 1} \mid A - q \cdot dF(v) + F \cdot \operatorname{div}(v)\end{equation}
holds in the local ring $\mathbb{C}[X_0, X_1, X_2, Y_0, Y_1, Y_2]_{(F)}$; here, $\operatorname{div}(v)$ denotes $v$'s \textit{divergence}. 
Variants of this expression appear implicitly in Griffiths \cite[(4.5)]{Griffiths:1969aa}, Lewis \cite[9.32.~Thm.]{Lewis:1999aa} and Voisin \cite[Lem.~6.11]{Voisin:2003aa}. 

In Section \ref{solve}, we turn our attention to the algebraic partial differential equation \eqref{key}. We first apply the linear change of variables $(X_0, X_1, X_2, Y_0, Y_1, Y_2) \mapsto (X_0, X_1, X_2, i \cdot Y_0, i \cdot Y_1, i \cdot Y_2) \eqqcolon (U_0, U_1, U_2, V_0, V_1, V_2)$, under which $\iota$ identifies with the \textit{block swap} involution $\sigma : (U_0, U_1, U_2, V_0, V_1, V_2) \mapsto (V_0, V_1, V_2, U_0, U_1, U_2)$ and $f(X_0, X_1, X_2) - f(Y_0, Y_1, Y_2)$ with $f(U_0, U_1, U_2) + f(V_0, V_1, V_2)$. In this way, we reduce our task to solving, for $F(U_0, U_1, U_2, V_0, V_1, V_2)$ of that latter shape, the equation \eqref{key} for each $\sigma$-anti-invariant $A \in R^{6 \cdot q}$. The prototypical such $F$ is the Fermat sextic $F(U_0, U_1, U_2, V_0, V_1, V_2) = U_0^6 + U_1^6 + U_2^6 + V_0^6 + V_1^6 + V_2^6$, which we handle first. To this end, we develop an explicit algorithm (see Construction \ref{full_construction}) that constructively solves \eqref{key} in case $F$ is Fermat and $A \in R^{6 \cdot q}$ is $\sigma$-anti-invariant. Our algorithm extends Bostan, Lairez and Salvy's \textit{Griffiths--Dwork reduction} \cite[Alg.~1]{Bostan:2013aa} by explicitly solving---using a Fermat-specific method---the successive Jacobian remainders that arise during that algorithm, as opposed to merely accumulating them out-of-band (see also Remark \ref{bls}). Indeed, Griffiths--Dwork reduction serves only to decide the exactness of $\omega_A$ on all of $\mathbb{P}^5 \setminus X$, a condition that obtains only if $A \in R^{6 \cdot q} \cap J$ lies in the Jacobian ideal, and so represents the zero cohomology class in $H^4(X, \mathbb{C})$. We must rather decide, for each \textit{nontrivial} anti-invariant $A$, whether there is a proper principal open $U \subset \mathbb{P}^5$ meeting $X$ on which $\left. \omega_A \right|_{U}$ becomes exact (up to holomorphic error).  
Producing this principal open---essentially, an algebraic cycle---is far harder. (By the Lefschetz theorem on Picard groups, each codimension-1 subscheme $Y \subset X$ arises as $Y = V \cap X$ for some hypersurface $V \subset \mathbb{P}^5$.)

We now sketch our Fermat-specific method, which is the core of our result. By Hodge symmetry, it's enough just to handle the cases $q \in \{0, 1, 2\}$. Further, up to performing Jacobian reduction \textit{à la} Griffiths--Dwork, we may safely restrict our attention to those monomials $A = Z^a$ of total degree $6 \cdot q$ each of whose exponents is at most 4. We exploit the purely combinatorial fact whereby, for each such $a = (a_0, \ldots, a_5)$---i.e., for which each $a_i \in \{0, \ldots , 4\}$ and $\sum_{i = 0}^5 a_i = 6 \cdot q$, where $q \in \{1, 2\}$---there exists a nonempty and proper subset $I \subset \{0, \ldots , 5\}$ such that $\sum_{i \in I} a_i + |I| = 6$. (This fact fails as soon as $q > 2$; see Remark \ref{fails}.) This fact can be proven by brute enumeration (see Lemma \ref{combinatorial} below). Each such $I$ yields a ``partial Euler'' vector field $\nu_I \coloneqq \sum_{i \in I} Z_i \cdot \frac{\partial}{\partial Z_i}$ for which, first of all, $dF(\nu_I) \notin (F)$ holds (see Lemma \ref{lie} below), and for which, moreover, the witness $v \coloneqq \frac{A \cdot \nu_I}{q \cdot dF(\nu_I)}$ directly solves \eqref{key}, as an elementary quotient-rule calculation demonstrates (see Lemma \ref{quotient}). We use $A$'s anti-invariance to assert the vanishing of its fully reduced counterpart in the base case of our algorithm (see Lemma \ref{final}). This completes our treatment of the Fermat sextic $F$.

Finally, for each sextic $f(U_0, U_1, U_2) = L_0(U_0, U_1, U_2)^6 + L_1(U_0, U_1, U_2)^6 + L_2(U_0, U_1, U_2)^6$ of Waring rank 3 whose associated plane curve $C \subset \mathbb{P}^2$ is smooth, the (necessarily linearly independent) forms $L_0$, $L_1$ and $L_2$ define an element of $\text{GL}_3(\mathbb{C})$. Upon applying that element blockwise, we obtain a $\sigma$-equivariant change of coordinates that identifies the Fermat sextic with $f(U_0, U_1, U_2) + f(V_0, V_1, V_2)$, and under which our Fermat solution too transports. This solves \eqref{key} for each such sextic $f(U_0, U_1, U_2) + f(V_0, V_1, V_2)$ (see Theorem \ref{last}).

Interestingly, our divisors $V \subset \mathbb{P}^5$---i.e., defined by the equations $dF(\nu_I)$, for $I \subset \{0, \ldots , 5\}$ nonempty and proper---can be interpreted geometrically, as we now explain. We fix such an $I \subset \{0, \ldots , 5\}$. For notational convenience, we assume that $I = \{i, \ldots, 5\}$ for some $i \in \{1, \ldots , 5\}$. 
The prescription $\varphi : (Z_0 : \cdots : Z_5) \mapsto \left( (Z_0 : \cdots : Z_{i - 1}), (Z_i : \cdots : Z_5) \right)$ defines a rational map $\varphi : \mathbb{P}^5 \dashrightarrow \mathbb{P}^{i - 1} \times \mathbb{P}^{5 - i}$, undefined along the union of the two projective linear subspaces $Z_0 = \cdots = Z_{i - 1} = 0$ and $Z_i = \cdots = Z_5 = 0$. Over each point $\left( (z_0 : \cdots : z_{i - 1}), (z_i : \cdots : z_5) \right)$, the fiber of $\varphi$ is $\set*{ \left( \lambda \cdot z_0 : \cdots : \lambda \cdot z_{i - 1} : \mu \cdot z_i : \cdots : \mu \cdot z_5\right) ; (\lambda : \mu) \in \mathbb{P}^1 }$ (or more properly, that line minus its two endpoints). We claim that the divisor $V \subset \mathbb{P}^5$ cut out by $dF(\nu_I)$ is such that $V \cap X$ is the ramification divisor of $X$ under this fibration. To see this, note that the map $(a_1, \ldots , a_{i - 1}, b_{i + 1}, \ldots , b_5, t) \mapsto (1 : a_1 : \cdots : a_{i - 1} : t : t \cdot b_{i + 1} : \cdots : t \cdot b_5)$ maps $\mathbb{C}^4 \times \mathbb{C}^*$ isomorphically 
onto the open chart $\set{ (Z_0 : \cdots : Z_5) \in \mathbb{P}^5 ; Z_0 \neq 0 \text{ and } Z_i \neq 0 }$ of $\mathbb{P}^5$. Moreover, each $(a_1, \ldots , a_{i - 1}, b_{i + 1}, \ldots , b_5)$ defines a fiber of $\varphi$, and the intersection of $X$ with that fiber is the vanishing locus of the univariate sextic $f_{a, b}(t) \coloneqq F(1, a_1, \ldots , a_{i - 1}, t, t \cdot b_{i + 1}, \ldots , t \cdot b_5)$. The ramification locus of $X$ is defined by the vanishing conditions $f_{a, b}(t) = 0$ and $f'_{a, b}(t) = 0$. By the chain rule, $t \cdot f'_{a, b}(t)$ is exactly the pullback of $\sum_{i \in I} Z_i \cdot \frac{\partial F}{\partial Z_i}$.

We discuss a few further aspects of the special case of the generalized Hodge conjecture treated by this work that make it especially attractive. Typical cases of the generalized Hodge conjecture present the difficulty whereby no purely algebraic criterion---i.e., other than that furnished tautologically by the conjecture itself---serves to identify, within the set of all complex cohomology classes, those whose vanishing the conjecture predicts. 
In other words, it's hard to know which classes one must kill, if he is to establish the conjecture, in each given case.
While the Hodge filtration $F^p H^k(X, \mathbb{C}) = \operatorname{im} \left( \mathbb{H}^k(X, \Omega^{\geq p}_X) \to \mathbb{H}^k(X, \Omega^\bullet_X) \right)$ on $H^k(X, \mathbb{C})$
admits a purely algebraic---i.e., hypercohomological---description, the underlying Betti $\mathbb{Q}$-structure on that space doesn't;  in particular, one can't in general detect purely algebraically which de Rham classes lie in the complexification of the rational Hodge substructure at issue.

The problem treated by this work, on the other hand, bypasses this issue by \textit{fixing} a Hodge structure---known \textit{a priori} to be of Hodge coniveau 1---and then asking whether the generalized Hodge conjecture's prediction as regards it holds. Moreover, the Hodge structure at issue---or rather, its complexification---can be characterized purely algebraically (i.e., as that consisting of the involution-invariant algebraic de Rham hypercohomology classes). This fact bypasses, in our case, the problem of ``detecting Hodge classes'', and makes our variant of Griffiths's explicit description extremely effective. Our refinement of that description, together with the results of our Subsection \ref{algebraic}, serves to establish not just the sufficiency of the algebraic partial differential equation \eqref{key} (which seems more-or-less standard), but further its necessity. 

Moreover, the target \eqref{key} of this work's reductions can be attacked computationally, as we now explain. For each $\sigma$-invariant sextic $F$ and each $\sigma$-anti-invariant homogeneous polynomial $A$ of the right degree, each \textit{candidate} denominator $D$---homogeneous, as well as involution-invariant, say---serves to induce a finite-dimensional space of solutions $v = (v_0, \ldots , v_5)$, itself consisting of those vector fields of the shape $v = \left( \frac{A_0}{D}, \ldots , \frac{A_5}{D} \right)$, for numerators $A_i$ again homogeneous and of the right degree (we explain this reformulation further in Subsection \ref{algebraic}). The equation \eqref{key} yields a linear system on the coefficients of these numerators, whose consistency can be checked in finite time (at least in principle). 
The problem, then, is to find denominators $D$ that cause ``clever'' cancellations to take place in \eqref{key}. (This latter problem, of course, algebraically distills the very cycle-theoretic statement at issue.) While we've been able to achieve just this for $F$ the Fermat sextic, and for a family of sextics closely related to that one, doing as much for general $F$ seems difficult. This work is the first of which we're aware that manages to reduce a case of the generalized Hodge conjecture to a further problem that's equivalent \textit{and} is amenable to computer-aided search.

Indeed, it is not for lack of effort that we have failed---thus far---to solve \eqref{key} for, say, a nonempty Zariski-open subset's worth of $\sigma$-invariant sextics $F(U_0, U_1, U_2, V_0, V_1, V_2)$ (this latter desideratum captures the full ``challenging'' question asked by Voisin \cite{Voisin:2015aa}). Our inability---despite its concreteness and simplicity---to solve that equation for a broader class of sextics $F$ reflects, presumably, the great difficulty with which algebraic cycles are inevitably produced. We suggest the equation \eqref{key} as a point of departure for future efforts 
(see also Question \ref{question} below).

\subsection{Background and Notation}

We adopt the notation of Lewis \cite[Lec.~9]{Lewis:1999aa}, Görtz and Wedhorn \cite{Gortz:2023aa} and Peters and Steenbrink \cite{Peters:2008aa}.
We fix an integer $n > 1$ and a smooth hypersurface $X \subset \mathbb{P}^n$ whose defining equation $F(Z_0, \ldots , Z_n) \in \mathbb{C}[Z_0, \ldots , Z_n]$ is of degree $d$, say. We write $\mathscr{U} = \left\{ U_j \right\}_{j \in J}$ for an open cover of $\mathbb{P}^n$.

We make use of Čech hypercohomology (see e.g.\ \cite[Def.~Rem.~21.74]{Gortz:2023aa}). That is, we have the total complex $\check{C}(\mathscr{U}, \Omega^\bullet_{\mathbb{P}^n}(\log X)) \coloneqq \operatorname{Tot}\left((\check{C}^i(\mathscr{U}, \Omega^j_{\mathbb{P}^n}(\log X)))_{i, j} \right)$ on $\mathbb{P}^n$. 
By convention, $i \in \{0, \ldots , n\}$ indexes $(\check{C}^i(\mathscr{U}, \Omega^j_{\mathbb{P}^n}(\log X)))_{i, j}$'s columns and $j \in \{0, \ldots , n\}$ its rows (see \cite[(F.17.1)]{Gortz:2023aa}). We recall that the total complex's differential flips each of that double complex's odd-indexed vertical differentials (this is \cite[Def.~F.87]{Gortz:2023aa}). 
Similarly, we have the total complex
$\check{C}(\mathscr{U} \cap X, \Omega^\bullet_X) \coloneqq \operatorname{Tot}\left((\check{C}^i(\mathscr{U} \cap X, \Omega^j_X))_{i, j} \right)$ on $X$.

We use the notational convention whereby $\delta$ signifies the Čech differential, $\partial$ signifies the holomorphic differential, and $d$ signifies the total differential (i.e., that of the total complex). Following \cite[(F.14)]{Gortz:2023aa}, we write $\pi_n(C)$ for the $n$\textsuperscript{th} component of a complex $C$, as well as $Z^n(C)$, $B^n(C)$ and $H^n(C)$ respectively for its $n$\textsuperscript{th} cocycle, coboundary and cohomology objects.

We may freely assume that each component of $\mathscr{U} = \left\{ U_j \right\}_{j \in J}$ is affine. Since $\mathbb{P}^n$ is separated, each iterated intersection $U_{j_0, \ldots , j_k} \coloneqq U_{j_0} \cap \cdots \cap U_{j_k}$---and further, each set of the form $U_{j_0, \ldots , j_k} \cap U$, where $U \subset \mathbb{P}^n$ is itself affine---is affine; its analytification is thus Stein. Since $X \subset \mathbb{P}^n$ is closed, each set of the form $U_{j_0, \ldots , j_k} \cap X$ and $U_{j_0, \ldots , j_k} \cap U \cap X$ is again affine, with Stein analytification. Each algebraic sheaf used in this work is coherent, so that its analytification is a coherent analytic sheaf. Cartan's Theorem B (see e.g.\ \cite[Ex.~B.17]{Peters:2008aa}) thus implies that on each of the open sets catalogued above, each such sheaf's higher cohomology vanishes. We use this result in two ways. First, it guarantees that the covers $\mathscr{U}$, $\mathscr{U} \cap X$, $\mathscr{U} \cap U$ and $\mathscr{U} \cap U \cap X$ are all Leray for all coherent analytic sheaves at hand. In particular, we obtain, for example, the guarantees $\check{H}^q\left( \mathscr{U} \cap X, \Omega^p_X \right) \cong H^q\left( X, \Omega^p_X \right)$ (this is the usual Leray theorem; see e.g.\ \cite[Thm.~B.16]{Peters:2008aa}). We further obtain global hypercohomological results like $\check{H}^{n - 1}\left( \mathscr{U} \cap X, \Omega^\bullet_X \right) \cong \mathbb{H}^{n - 1}\left( X, \Omega^\bullet_X \right)$ (see e.g.\ \cite[Cor.~21.82]{Gortz:2023aa}). We note that, by the holomorphic Poincaré lemma, this latter set in turn satisfies $\mathbb{H}^{n - 1}(X, \Omega^\bullet_X) \cong H^{n - 1}(X, \mathbb{C})$ (see e.g.\ \cite[Thm.~B.18]{Peters:2008aa}).
In addition, since the analytification of $U$ itself is Stein, we separately use Cartan's theorem to guarantee that each coherent analytic sheaf's higher cohomology vanishes on $U$. 
Combining these results, we conclude in particular that each positive-indexed Čech cohomology group of each coherent analytic sheaf on $\mathbb{P}^n$, taken with respect to the cover $\mathscr{U} \cap U$ of $U \subset \mathbb{P}^n$, vanishes.
Finally, it holds that each exact sequence of coherent analytic $\mathcal{O}_{\mathbb{P}^n}$-modules used below specializes, over each of the open sets introduced above, to an exact sequence on sections. This fact justifies those among our steps below that ``lift'' Čech cochains.

Similarly, for each affine Zariski-open $U \subset \mathbb{P}^n$, Serre's cohomological criterion for affineness (see e.g.\ Hartshorne \cite[III~Thm.~3.7]{Hartshorne:1977aa}) implies that each exact sequence of quasicoherent algebraic sheaves 
specializes over $U$ to an exact sequence on algebraic sections.


We fix a pair $p + q = n - 1$. For each $q \geq 1$, we have the long exact sequence \cite[9.29.~Lem.]{Lewis:1999aa}, namely
\begin{equation*}0 \to \Omega^{p + 1}_{\mathbb{P}^n}(\log X) \hookrightarrow \Omega_{\mathbb{P}^n}^{p + 1}(X) \xlongrightarrow{\partial_1} \Omega_{\mathbb{P}^n}^{p + 2}(2X) \mathbin{/} \Omega_{\mathbb{P}^n}^{p + 2}(X) \xlongrightarrow{\partial_2} \cdots \xlongrightarrow{\partial_q} \Omega_{\mathbb{P}^n}^n((q + 1)X) \mathbin{/} \Omega_{\mathbb{P}^n}^n(qX) \to 0.\end{equation*}
In what follows, in order to economically consolidate notation, we call the leftmost and rightmost sheaves just above $\ker \partial_1$ and $\ker \partial_{q + 1}$. Moreover, as in \cite[9.30]{Lewis:1999aa}, we break up the sequence above into short exact sequences
\begin{equation}\label{short}0 \to \ker \partial_\ell \to \mathcal{Q}_\ell \to \ker \partial_{\ell + 1} \to 0\end{equation}
for $\ell \in \{1, \ldots , q\}$; here, we use the notational device whereby we define $\mathcal{Q}_\ell \coloneqq \Omega_{\mathbb{P}^n}^{p + \ell}(\ell X) \mathbin{/} \Omega_{\mathbb{P}^n}^{p + \ell}((\ell - 1)X)$ if $\ell > 1$ and $\mathcal{Q}_\ell \coloneqq \Omega^{p + 1}_{\mathbb{P}^n}(X)$ if $\ell = 1$. These cases correspond respectively to \cite[(9.4)]{Lewis:1999aa} and \cite[(9.5)]{Lewis:1999aa}.

These exact sequences induce boundary maps $\check{H}^0\left( \mathscr{U}, \ker \partial_{q + 1}\right) \to \check{H}^1 \left( \mathscr{U}, \ker \partial_q \right) \to \cdots \to \check{H}^q\left(\mathscr{U}, \ker \partial_1 \right)$. It is a consequence of Bott's vanishing theorem that the first among these maps is a surjection, while the rest are isomorphisms (see the proof of \cite[9.31.~Prop.]{Lewis:1999aa}).

Following Lewis (see \cite[9.16.~Prop.]{Lewis:1999aa}), we use angular brackets to denote the \textit{contraction action} between a vector field and a differential form.

\paragraph{Acknowledgements.} We would like to thank Raju Krishnamoorthy and Robert Laterveer for various very helpful discussions. We'd like to gratefully thank Math, Inc.\ and Jesse Han for their support.

\section{Effective Griffiths--Lewis} \label{basic}

In this section, we develop an effective variant of Griffiths's characterization \cite{Griffiths:1969aa} of the primitive cohomology of hypersurfaces. We fix an integer $n > 1$, a smooth hypersurface $X \subset \mathbb{P}^n$ of degree $d$, and a pair of nonnegative integers $p$ and $q$ for which $p + q = n - 1$. Our goal is to make effectively decidable, for each class in $H^{n - 1}(X, \mathbb{C})_{\text{prim}}$, the matter of that class's vanishing outside of some closed algebraic subset $Y \subset X$, itself given as $Y = V \cap X$, say, for some hypersurface $V \subset \mathbb{P}^n$. We write $U \coloneqq \mathbb{P}^n \setminus V$.

Using the degeneration at $E_1$ of the spectral sequence associated to the Hodge filtration on the logarithmic complex $\Omega^\bullet_{\mathbb{P}^n}(\log X)$---itself a deep result, due to Deligne (see \cite[Thm.~8.35~(b)]{Voisin:2002aa})---Voisin \cite[Thm.~6.5]{Voisin:2003aa} proves a modern form of Griffiths's result, whereby the respective Poincaré residues of the differential $n$-forms $\omega_A = \frac{A \cdot \Omega}{F^{q + 1}}$, for $A$ homogeneous of degree $d \cdot (q + 1) - (n + 1)$, exhaust  $F^p H^{n - 1}(X, \mathbb{C})_{\text{prim}}$. Here, $\Omega \coloneqq \langle \nu, \operatorname{vol} \rangle$ is the contraction of the volume form on $\mathbb{C}^{n + 1}$ by the Euler vector field $\nu \coloneqq \sum_{i = 0}^n Z_i \cdot \frac{\partial}{\partial Z_i}$. 
Voisin's variant implies that the cohomology class corresponding to the residue of $\omega_A$ dies on $X \cap U$ if and only if there exists a holomorphic $n - 1$-form $\beta \in \Gamma\left( U \setminus X, \Omega^{n - 1}_{\mathbb{P}^n} \right)$ such that $\left. \omega_A \right|_U - \partial \beta$ extends holomorphically across $X$. \textit{A priori}, the pole order of $\beta$ along $X$ could be arbitrarily large. In fact, $\beta$ could have essential singularities both along $X$ and along the boundary of $U$, a pathology that the Grothendieck comparison theorem apparently serves only partially to rule out (we discuss this phenomenon below).

In this section, we clarify the pole order of $\beta$ along $X$. In fact, we show (see Theorem \ref{main} below) that the class of $\operatorname{Res}_X(\omega_A)$ vanishes on $X \cap U$ if and only if there exists a $\beta \in \Gamma\left( U \setminus X, \Omega^{n - 1}_{\mathbb{P}^n} \right)$ such that $\left. \omega_A \right|_U - \partial \beta$ is holomorphic and \textit{$\beta$'s pole order along $X$ is at most $q$}; that is, we can freely assume $\beta \in \Gamma\left( U, \Omega^{n - 1}_{\mathbb{P}^n}(qX) \right)$. Moreover, we prove as much without assuming Deligne's theorem; in fact, our technique implies a very weak form of that theorem (see Remark \ref{weak_form}). In Theorem \ref{grothendieck}, we show that $\beta$ can moreover be assumed algebraic. 

We now sketch our approach, which adapts Lewis's treatment \cite[Lec.~9]{Lewis:1999aa}. Lewis shows that $\text{Prim}^{p, q}(X)$, viewed as a subspace of $H^q(X, \Omega^p_X)$ (see \cite[(9.2)]{Lewis:1999aa}), is exhausted by a certain multistep procedure which, beginning with the homogeneous polynomial $A \in R^{d \cdot (q + 1) - (n + 1)}$, first attaches the $n$-form $\omega_A \coloneqq \frac{A \cdot \Omega}{F^{q + 1}}$ on $\mathbb{P}^n$, before applying to $\omega_A$ a sequence of connecting homomorphisms (see \cite[9.31.~Prop.]{Lewis:1999aa}), and finally taking the result's residue.

Lewis's approach, while simple and elementary, yields just the graded pieces $\text{Prim}^{p, q}(X)$ of $H^{n - 1}(X, \mathbb{C})_{\text{prim}}$ with respect to the Hodge filtration. We begin, in Subsection \ref{total}, by producing Čech hypercohomology classes that respectively lift these graded pieces (see Lemma \ref{inductive_construction}). We produce these total Čech cocycles by the aid of a sort of iterative correction; that is, beginning with the cocycle $(\omega_A, 0, \ldots , 0)$, we iteratively produce offsets---themselves obtained by pole-order reduction---that serve to reduce that cocycle's worst-case pole order, while keeping intact its closedness (see Construction \ref{construction}). Adapting Lewis's \cite[9.28.~Prop.]{Lewis:1999aa} and \cite[9.31.~Prop.]{Lewis:1999aa}, we prove that the respective residues of the cocycles we obtain in this way, together with ambient cohomology, exhaust $H^{n - 1}(X, \mathbb{C})$ (see Theorem \ref{containment}). 

In Subsection \ref{criterion}, having thus explicitly described the elements of $H^{n - 1}(X, \mathbb{C})$, we turn to these classes' vanishing. Our Theorem \ref{main} asserts that the total cocycle $s = (s^{(0)}, \ldots , s^{(q)}, 0, \ldots , 0)$ associated to $A$ becomes almost-exact on $U$ (i.e., up to holomorphic error) if and only if its Griffiths form $\omega_A = \frac{A \cdot \Omega}{F^{q + 1}}$ becomes almost-exact on $U$ with respect to a primitive $\beta \in \Gamma\left( U \setminus X, \Omega^{n - 1}_{\mathbb{P}^n} \right)$ whose pole order along $X$, crucially, is at most $q$; that is, we may take $\beta \in \Gamma\left( U, \Omega^{n - 1}_{\mathbb{P}^n}(qX) \right)$. Our proof of this fact is again elementary and Čech-cohomological, albeit rather intricate. To achieve the forward direction (see Construction \ref{sufficiency} and Lemma \ref{correctness_primitive}), we build, using the assumed primitive $\beta$, an almost-total primitive of $s$ in the log complex, again by iteratively reducing pole order. In the reverse direction (see Construction \ref{necessity} and Lemma \ref{necessity_correctness}), we show that each among the components of a given \textit{total} primitive can be expressed as the sum of a $\delta$-closed form and a $\partial$-closed form; in this endeavor, we repeatedly apply Cartan's \textit{Theorem B}.  

In Subsection \ref{algebraic}, we discuss the algebraicity of $\omega_A$'s primitive. To show that the algebraic partial differential equation \eqref{key} (see also \eqref{thing} below) is not just \textit{sufficient} for the vanishing of $\operatorname{Res}_X(\omega_A)$ on $X \cap U$ but further \textit{necessary}, we must show that the primitive furnished by Theorem \ref{main} can be assumed algebraic. Our Theorem \ref{grothendieck} below achieves just this, making use of a certain kind of \textit{algebraic} pole-order reduction, this time built with the aid of a key formula of Griffiths (see Lemma \ref{griffiths}). In that theorem, the assumed finite pole order of $\beta$ near $X$ proves apparently essential; that is, given merely a primitive $\beta \in \Gamma\left( U \setminus X, \Omega^{n - 1}_{\mathbb{P}^n} \right)$ whose singularities near $X$ could in principle be essential, we're not sure how to produce an algebraic primitive $\eta$ analogous to that of our Theorem \ref{grothendieck}. This fact suggests that our sharpening of \cite[Thm.~6.5]{Voisin:2003aa} (i.e., that achieved by our Theorem \ref{main}) is critical, at least as far as the necessity of \eqref{key} is concerned.

\subsection{Producing Total Cocycles} \label{total}

Lewis's approach, given $A \in R^{d \cdot (q + 1) - (n + 1)}$, first defines the form $\omega_A \coloneqq \frac{A \cdot \Omega}{F^{q + 1}}$. It applies to $\omega_A$ a sequence of connecting homomorphisms, so producing a sequence of elements $\omega_A^{(q + 1 - \ell)} \in H^{q - \ell + 1}\left( \mathbb{P}^n, \ker \partial_\ell \right)$, 
say, for $\ell \in \{q + 1, \ldots , 1\}$ (i.e., working downwards). At the end, Lewis's procedure applies to $\omega_A^{(q)} \in H^q\left( \mathbb{P}^n, \Omega^{p + 1}_{\mathbb{P}^n}(\log X) \right)$ the map $\operatorname{Res}_X$ induced by the Poincaré residue $\Omega^{p + 1}_{\mathbb{P}^n}(\log X) \to \Omega^p_X$, and so obtains an element of $\text{Prim}^{p, q}(X) \subset H^q(X, \Omega^p_X)$.

In this subsection, we introduce a lifting technique that mimics Lewis's, but more closely keeps track of the intermediate data that that technique produces, and so outputs a \textit{total} Čech cocycle, as opposed to a graded element. 
We sketch our construction's basic idea. We begin with the total cocycle consisting \textit{just} of $\omega_A$ in its top-left cell. That total cocycle is \textit{not} valued in the logarithmic complex $\check{C}(\mathscr{U}, \Omega^\bullet_{\mathbb{P}^n}(\log X))$, unless $q = 0$ ($\omega_A$'s pole order along $X$ is $q + 1$). For $q$ iterations, we ``correct'' the total cocycle so far constructed by adding to it a totally-closed offset. That offset causes the standing cocycle's bottom-right-most item's poles to become logarithmic, at the cost of spawning a \textit{new} bottom-right-most item, whose pole order, on the other hand, is one lower than the standing cocycle's bottom-right item's had been. After repeating this procedure $q$ times, we obtain an honest logarithmic total cocycle. 
The $(p + 1, q)$-indexed component of the final cocycle we obtain in this way coincides, as a cohomology class in $H^q\left( \mathbb{P}^n, \Omega^{p + 1}_{\mathbb{P}^n}(\log X) \right)$, with Lewis's element $\omega_A^{(q)}$ above (up to a possible sign discrepancy).

In order to keep this section typographically manageable, we introduce a few notational devices. We suppress the braces, as well as the lower indices, attached to the various Čech cochains we use. In particular, we write each Čech cochain as a single symbol, together with a parenthesized superscript that indicates that cochain's arity. For example, our $\omega^{(q)}$ below is a Čech $q$-cochain, an element of $\check{C}^{q}\left( \mathscr{U}, \Omega_{\mathbb{P}^n}^{p + 1}(\log X) \right)$.

We begin with our main construction.

\begin{construction}[Construction of Total Cocycle] \label{construction}
\textit{Input.} A smooth hypersurface $X \subset \mathbb{P}^n$, a pair $p + q = n - 1$, and a homogeneous $A \in R^{d \cdot (q + 1) - (n + 1)}$. \\
\textit{Output.} A total cocycle $s = (s^{(0)}, \ldots , s^{(q)}, 0, \ldots , 0)$ in $Z^n \left( \check{C}(\mathscr{U}, \Omega^\bullet_{\mathbb{P}^n}(\log X)) \right)$. 
\begin{itemize}
\item Set $\omega_A \coloneqq \frac{A \cdot \Omega}{F^{q + 1}}$ and initialize the Čech 0-cochain $\omega^{(0)} \coloneqq \left\{ \left. \omega_A\right|_{U_{j}} \right\}_{j}$ in $\check{C}^0\left( \mathscr{U}, \Omega_{\mathbb{P}^n}^n((q + 1)X) \right)$.
\item \textbf{for} $\ell \in \{q, \ldots , 1\}$ \textbf{do}:
\begin{itemize}
\item Initialize $\alpha^{(q - \ell)}_{\ell + 1} \coloneqq 0$.
\item \textbf{for} $h \in \{\ell, \ldots , 1\}$ \textbf{do}:
\begin{itemize}
\item By induction on $\ell$ (see below), $\partial\omega^{(q - \ell)} \in \check{C}^{q - \ell}\left( \mathscr{U}, \Omega^{p + \ell + 2}_{\mathbb{P}^n}(\log X) \right)$ 
has logarithmic poles (and so \textit{a fortiori} order-$h + 1$ poles). Moreover, by induction on $h$, $\omega^{(q - \ell)} - \partial\alpha^{(q - \ell)}_{h + 1} \in \check{C}^{q - \ell}\left( \mathscr{U}, \Omega_{\mathbb{P}^n}^{p + \ell + 1}((h + 1)X) \right)$ has order-$h + 1$ poles at most.
Thus, by the exact sequence
\begin{equation*}\Omega_{\mathbb{P}^n}^{p + \ell}(hX) \xlongrightarrow{\partial} \Omega_{\mathbb{P}^n}^{p + \ell + 1}((h + 1)X) \mathbin{/} \Omega_{\mathbb{P}^n}^{p + \ell + 1}(hX) \xlongrightarrow{\partial} \Omega_{\mathbb{P}^n}^{p + \ell + 2}((h + 2)X) \mathbin{/} \Omega_{\mathbb{P}^n}^{p + \ell + 2}((h + 1)X),\end{equation*}
we can lift $\omega^{(q - \ell)} - \partial\alpha^{(q - \ell)}_{h + 1}$ to an element $\gamma^{(q - \ell)}_h \in \check{C}^{q - \ell}\left( \mathscr{U}, \Omega_{\mathbb{P}^n}^{p + \ell}\left( hX \right) \right)$ say,
such that $\omega^{(q - \ell)} - \partial\alpha^{(q - \ell)}_{h + 1} - \partial\gamma^{(q - \ell)}_h$ has poles of order $h$ at most. Update $\alpha^{(q - \ell)}_h \coloneqq \alpha^{(q - \ell)}_{h + 1} + \gamma^{(q - \ell)}_h$.
\end{itemize}
\item Define $\vartheta^{(q - \ell)} \coloneqq \alpha^{(q - \ell)}_1$ in $\check{C}^{q - \ell}\left( \mathscr{U}, \Omega_{\mathbb{P}^n}^{p + \ell}(\ell X) \right)$.
\item Further, define $\omega^{(q - \ell + 1)} \coloneqq (-1)^{q - \ell + 1} \cdot \delta \vartheta^{(q - \ell)} $ in $\check{C}^{q - \ell + 1}\left( \mathscr{U}, \Omega_{\mathbb{P}^n}^{p + \ell}(\ell X) \right)$.
\end{itemize}
\item Output the total cochain $\big( \omega^{(0)} - \partial \vartheta^{(0)} , \ldots , \omega^{(q - 1)} - \partial \vartheta^{(q - 1)} , \omega^{(q)}, 0, \ldots , 0 \big)$.
\end{itemize}
\end{construction}

The following claim establishes the correctness of Construction \ref{construction}.
\begin{lemma} \label{inductive_construction}
The total cochain $s = (s^{(0)}, \ldots , s^{(q)}, 0, \ldots , 0)$ output by Construction \ref{construction} takes values in the logarithmic complex $\check{C}(\mathscr{U}, \Omega^\bullet_{\mathbb{P}^n}(\log X))$ and is a total cocycle. The class of $s^{(q)}$ in $\check{H}^q\left( \mathscr{U}, \Omega_{\mathbb{P}^n}^{p + 1}(\log X) \right)$ is $(-1)^{\binom{q + 1}{2}}$ times the image of $\omega^{(0)}$ under the chain of boundary maps $\check{H}^0\left( \mathscr{U}, \ker \partial_{q + 1}\right) \to \cdots \to \check{H}^q\left(\mathscr{U}, \ker \partial_1 \right)$.
\end{lemma}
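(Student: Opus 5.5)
The plan is to prove all three assertions together by induction on the outer loop index $\ell$, running from $q$ down to $1$. Write $k \coloneqq q - \ell$ for the Čech degree. The invariant I would carry is:
\begin{itemize}
\item[(i)] $\omega^{(k)} \in \check{C}^{k}\left(\mathscr{U}, \Omega^{p+\ell+1}_{\mathbb{P}^n}((\ell+1)X)\right)$;
\item[(ii)] $\partial\omega^{(k)}$ has logarithmic poles;
\item[(iii)] $\delta\omega^{(k)} = 0$.
\end{itemize}
In the base case $k = 0$, (ii) is trivial because $\partial\omega^{(0)}$ is an $(n+1)$-form and hence vanishes. Also (iii) holds because $\omega^{(0)}$ is the restriction of a global form.

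Given the invariant at stage $k$, the inner loop over $h$ is well posed. At each step, $\omega^{(k)} - \partial\alpha_{h+1}$ has poles of order at most $h+1$, and its $\partial$ equals $\partial\omega^{(k)}$, which is logarithmic. So its class in $\Omega^{p+\ell+1}((h+1)X)/\Omega^{p+\ell+1}(hX)$ lies in the kernel of the next $\partial$. Exactness of the displayed sequence on the affine (Stein) intersections $U_{j_0,\ldots,j_k}$ then gives the lift $\gamma_h$, using the exactness of Lewis's sequence \cite[9.29.~Lem.]{Lewis:1999aa} on sections as recalled in the notation section. After the step $h = 1$, the form $s^{(k)} = \omega^{(k)} - \partial\vartheta^{(k)}$ has at most simple poles. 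Its differential $\partial s^{(k)} = \partial\omega^{(k)}$ is logarithmic, so $s^{(k)}$ is itself a logarithmic form (a simple-pole form with simple-pole differential).

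Next I would propagate the invariant to $\omega^{(k+1)} = (-1)^{k+1}\delta\vartheta^{(k)}$. Property (i) holds because $\vartheta^{(k)}$ has pole order $\ell = (\ell-1)+1$. Property (iii) holds because $\delta^2 = 0$. For (ii), compute
$$\partial\omega^{(k+1)} = (-1)^{k+1}\delta\partial\vartheta^{(k)} = (-1)^{k+1}\delta\bigl(\omega^{(k)} - s^{(k)}\bigr) = (-1)^{k}\,\delta s^{(k)},$$
and the right-hand side is logarithmic since $s^{(k)}$ is. At the final stage $\ell = 1$, $\omega^{(q)}$ has simple poles and logarithmic $\partial$, so it is logarithmic. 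This establishes that $s$ is valued in the log complex.

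For the cocycle condition, the component of $ds$ in Čech column $k+1$ is $\delta s^{(k)} + (-1)^{k+1}\partial s^{(k+1)}$, since the total differential flips the odd-indexed vertical differentials. By the display above and $\partial s^{(k+1)} = \partial\omega^{(k+1)}$, this component vanishes. The remaining components also vanish: $\partial s^{(0)}$ is an $(n+1)$-form, and $\delta s^{(q)} = \delta\omega^{(q)} = 0$.

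For the identification with Lewis's boundary maps, I would observe that the class of $\omega^{(k)}$ in $\check{H}^k(\mathscr{U}, \ker\partial_{\ell+1})$ is represented by its image in $\mathcal{Q}_{\ell+1}$. Indeed, by (ii) and (iii), that image is a $\delta$-closed cochain valued in $\ker\partial_{\ell+1}$.

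The connecting map of \eqref{short} is computed by three steps:
\begin{itemize}
\item lift that cochain to $\mathcal{Q}_\ell$;
\item apply $\delta$;
\item read off the result in $\ker\partial_\ell$.
\end{itemize}
A valid lift is the image of $\vartheta^{(k)}$ in $\mathcal{Q}_\ell$. When $\ell > 1$, the lower-order corrections $\gamma_h$ with $h < \ell$ die in the quotient, so only $\gamma_\ell$ contributes. Moreover, $\partial\vartheta^{(k)} \equiv \omega^{(k)}$ modulo $\Omega((\ell)X)$, so the lift maps to the right class. Hence the boundary of $[\omega^{(k)}]$ is $[\delta\vartheta^{(k)}] = (-1)^{k+1}[\omega^{(k+1)}]$. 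The case $\ell = 1$ uses $\mathcal{Q}_1 = \Omega^{p+1}(X)$ with no quotient, and the same computation applies. Multiplying the signs gives $\prod_{k=0}^{q-1}(-1)^{k+1} = (-1)^{\binom{q+1}{2}}$.

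I expect the main obstacle to be this last step. It requires checking carefully that the lifts produced by the inner loop really are admissible lifts for the connecting homomorphisms of \eqref{short}. In particular, the quotient sheaves $\mathcal{Q}_\ell$ must be matched with the pole-order filtration, and the convention relating $\ker\partial_{\ell+1}$ to pole order $\ell+1$ must be tracked consistently, together with the signs.
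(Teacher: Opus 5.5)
Your proposal is correct and follows essentially the same route as the paper. Both arguments use the same downward induction carrying $\delta$-closedness of $\omega^{(k)}$ and logarithmicity of $\partial\omega^{(k)}$, and both rest on the identity $\partial\omega^{(k+1)} = (-1)^{k}\,\delta s^{(k)}$. Both also identify the image of $\vartheta^{(k)}$ as the lift computing the connecting map of \eqref{short}, which contributes the sign $(-1)^{k+1}$ at each step.

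The only cosmetic difference is in the total cocycle condition. You check it componentwise at the end. The paper instead carries it as an inductive invariant, adding the totally closed offset $(0,\ldots,0,-\partial\vartheta^{(q-\ell)},\omega^{(q-\ell+1)},0,\ldots,0)$ at each stage.
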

\begin{proof}
We prove by induction that for each $\ell \in \{q + 1, \ldots , 1\}$, 
the Čech cochains $\omega^{(0)}, \ldots , \omega^{(q - \ell + 1)}$ and $\vartheta^{(0)}, \ldots , \vartheta^{(q - \ell)}$ constructed by Construction \ref{construction} satisfy the following invariants. 
\begin{enumerate}
\item\label{closed} $\omega^{(q - \ell + 1)} \in \check{C}^{q - \ell + 1}\left( \mathscr{U}, \Omega_{\mathbb{P}^n}^{p + \ell}(\ell X)\right)$ is $\delta$-closed.
\item\label{important} $\partial\omega^{(q - \ell + 1)} \in \check{C}^{q - \ell + 1}\left( \mathscr{U}, \Omega_{\mathbb{P}^n}^{p + \ell + 1}(\log X) \right)$ has logarithmic poles.
\item\label{chain} $\omega^{(q - \ell + 1)}$ defines a class in $\check{H}^{q - \ell + 1}\left( \mathscr{U}, \ker \partial_\ell \right)$, which in fact represents $(-1)^{\binom{q - \ell + 2}{2}}$ times the image of $\omega^{(0)}$ under the chain of boundary maps $\check{H}^0\left( \mathscr{U}, \ker \partial_{q + 1}\right) \to \cdots \to \check{H}^{q - \ell + 1}\left(\mathscr{U}, \ker \partial_\ell \right)$.
\item\label{closedness} The total cochain $\left( \omega^{(0)} - \partial\vartheta^{(0)}, \ldots , \omega^{(q - \ell)} - \partial\vartheta^{(q - \ell)}, \omega^{(q - \ell + 1)}, 0, \ldots , 0 \right)$ is a total cocycle. Its first $q - \ell + 1$ elements have logarithmic poles, whereas $\omega^{(q - \ell + 1)}$ has order-$\ell$ poles.
\end{enumerate}
We note that \ref{important} is exploited in an essential way \textit{during} the pole-lifting procedure above.

These items granted, the conclusion of the lemma essentially is immediate. Indeed, \ref{closedness} implies that the elements $\omega^{(0)} - \partial\vartheta^{(0)}, \ldots , \omega^{(q - 1)} - \partial\vartheta^{(q - 1)}$ have logarithmic poles, while $\omega^{(q)}$'s are at worst simple; on the other hand, \ref{important} implies that $\partial\omega^{(q)}$'s poles are logarithmic, so that $\omega^{(q)}$'s too in fact are (recall \cite[9.21.~Prop.-Def.]{Lewis:1999aa}) and the entire cocycle is log-valued.

In the base case $\ell = q + 1$, the inductive invariants above trivially hold by inspection. We fix an index $\ell \in \{q, \ldots , 1\}$; i.e., we proceed by \textit{downward} induction (i.e., in the same order as Construction \ref{construction}). Item \ref{closed} above is immediate, since $\omega^{(q - \ell + 1)} \coloneqq (-1)^{q - \ell + 1} \cdot \delta \vartheta^{(q - \ell)}$ is in fact a Čech coboundary.

The output $\vartheta^{(q - \ell)}$ of Construction \ref{construction}'s inner loop is defined in just such a way that $\omega^{(q - \ell)} - \partial\vartheta^{(q - \ell)} \in \check{C}^{q - \ell}\left( \mathscr{U}, \Omega_{\mathbb{P}^n}^{p + \ell + 1}(X) \right)$ has simple poles. Moreover, that quantity's holomorphic differential is $\partial\omega^{(q - \ell)}$, which has logarithmic poles (by the inductive case of \ref{important}). We conclude that $\omega^{(q - \ell)} - \partial\vartheta^{(q - \ell)}$ has logarithmic poles. Since $\omega^{(q - \ell + 1)}$'s pole order is clearly at most $\ell$, this observation proves the last sentence of item \ref{closedness}. To prove that item's first sentence, we note that the total cochain $\left( 0, \ldots , 0, -\partial \vartheta^{(q - \ell)}, \omega^{(q - \ell + 1)} , 0, \ldots , 0 \right)$ is closed, since $\delta \left(- \partial\vartheta^{(q - \ell)}\right) + (-1)^{q - \ell + 1} \cdot \partial \omega^{(q - \ell + 1)} = -\delta \partial\vartheta^{(q - \ell)} + \partial\delta \vartheta^{(q - \ell)} = 0$; taken together with the inductive case of \ref{closedness}, this observation proves \ref{closedness}.

We note that $\partial\omega^{(q - \ell + 1)} = (-1)^{q - \ell + 1} \cdot \partial\delta \vartheta^{(q - \ell)} = (-1)^{q - \ell + 1} \cdot -\delta\left( \omega^{(q - \ell)} - \partial\vartheta^{(q - \ell)} \right)$, where the last equality follows from the Čech-closedness of $\omega^{(q - \ell)}$ (itself guaranteed by the inductive case of \ref{closed}). Since, as we just proved, $\omega^{(q - \ell)} - \partial\vartheta^{(q - \ell)}$ has logarithmic poles, we see that $\partial\omega^{(q - \ell + 1)}$ too does, so that \ref{important} is proved.

Since $\omega^{(q - \ell)} - \partial\vartheta^{(q - \ell)}$ has logarithmic poles---and so, \textit{a fortiori}, poles of order at most $\ell$---it represents the zero cochain in $\check{C}^{q - \ell}\left( \mathscr{U}, \ker \partial_{\ell + 1} \right)$ (recall \eqref{short}). In other words, $\partial\vartheta^{(q - \ell)}$ and $\omega^{(q - \ell)}$ represent the same element in $\check{C}^{q - \ell}\left( \mathscr{U}, \ker \partial_{\ell + 1} \right)$. 
Since $\omega^{(q - \ell)}$ is itself Čech-closed (again by induction), it follows that the Čech differential $\delta \vartheta^{(q - \ell)}$ maps, under $\partial$, to the zero element in $\check{C}^{q - \ell + 1}\left( \mathscr{U}, \ker \partial_{\ell + 1} \right)$, so that $\delta \vartheta^{(q - \ell)}$ itself takes values in $\check{C}^{q - \ell + 1}\left( \mathscr{U}, \ker \partial_\ell \right)$ (recall again \eqref{short}), and in fact represents the image of the class of $\omega^{(q - \ell)}$ under the connecting map $\check{H}^{q - \ell}\left( \mathscr{U}, \ker \partial_{\ell + 1} \right) \to \check{H}^{q - \ell + 1} \left( \mathscr{U}, \ker \partial_{\ell} \right)$ attached to \eqref{short}. The element $\omega^{(q - \ell + 1)} \coloneqq (-1)^{q - \ell + 1} \cdot \delta \vartheta^{(q - \ell)}$ thus represents $(-1)^{q - \ell + 1}$ times this coboundary, a fact that proves \ref{chain} by induction.
\end{proof}



In the following theorem, we show that the respective residues of the cocycles collectively output by Construction \ref{construction} span, alongside $X$'s ambient cohomology, $H^{n - 1}(X, \mathbb{C})$. The main idea within the proof is already present within Lewis \cite[9.31.~Prop.]{Lewis:1999aa}. The main problem is to lift that componentwise surjectivity statement into one pertaining to total classes; we must also deal with ambient cohomology.

We again make use of the total complex $\check{C}(\mathscr{U} \cap X, \Omega^\bullet_X)$ attached to the holomorphic Čech double complex on $X$, as well as the further complex $\check{C}(\mathscr{U}, \Omega^\bullet_{\mathbb{P}^n})$ on $\mathbb{P}^n$. 
We recall that $H^{n - 1}(X, \mathbb{C}) \cong \mathbb{H}^{n - 1}(X, \Omega^\bullet_X)$, and again represent that latter space concretely as $\check{H}^{n - 1}\left( \mathscr{U} \cap X, \Omega^\bullet_X \right)$.
We write $j : X \hookrightarrow \mathbb{P}^n$ for the inclusion. 


For each $q \in \{0, \ldots , n - 1\}$, we write $\mathcal{R}_q \coloneqq \operatorname{Span}_\mathbb{C} \set*{ [\operatorname{Res}_X(s)] }_A$, i.e., the span of the cohomology classes of the respective residues of the cocycles output by Construction \ref{construction}, as $A \in R^{d \cdot (q + 1) - (n + 1)}$ varies. By the first sentence of Lemma \ref{inductive_construction}, these residues are indeed closed, so taking their cohomology classes makes sense. 
Finally, we write $\mathcal{A} \coloneqq \operatorname{im}\left( j^* : \check{H}^{n - 1} \left( \mathscr{U}, \Omega^\bullet_{\mathbb{P}^n} \right) \to \check{H}^{n - 1} \left( \mathscr{U} \cap X, \Omega^\bullet_X \right) \right)$ for $X$'s ambient cohomology.

\begin{theorem} \label{containment}
The subspaces $\mathcal{R}_q$ and $\mathcal{A}$ span the cohomology of $X$; i.e., $\mathcal{A} + \sum_{q = 0}^{n - 1} \mathcal{R}_q = \check{H}^{n - 1} \left( \mathscr{U} \cap X, \Omega^\bullet_X \right)$.
\end{theorem}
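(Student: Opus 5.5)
The plan is to induct along the Hodge filtration given by the stupid truncations of the total complex. For each $p \in \{0, \ldots , n - 1\}$, write $F^p \subset \check{H}^{n - 1}\left( \mathscr{U} \cap X, \Omega^\bullet_X \right)$ for the image of the cohomology of the subcomplex $\operatorname{Tot}\left( (\check{C}^i(\mathscr{U} \cap X, \Omega^j_X))_{i, \, j \geq p} \right)$. I would prove, by downward induction on $p$, that
\begin{equation*}F^p \subset \mathcal{A} + \sum_{q' \leq n - 1 - p} \mathcal{R}_{q'}.\end{equation*}
Since $F^0$ is everything, the case $p = 0$ gives the theorem. The base case $p = n$ is trivial, because the truncated complex is zero in total degree $n - 1$.

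For the inductive step, fix $p$ and put $q = n - 1 - p$. Take a class represented by a total cocycle $t = (t^{(0)}, \ldots , t^{(q)}, 0, \ldots , 0)$ whose Čech-$i$ component lies in $\Omega^{n - 1 - i}_X$. Its bottom component $t^{(q)} \in \check{C}^q(\mathscr{U} \cap X, \Omega^p_X)$ is $\delta$-closed, since the next row down is zero, so it defines a class $[t^{(q)}] \in H^q(X, \Omega^p_X)$. This class lies in $\ker\left( H^q(X, \Omega^p_X) \to H^{q+1}(X, \Omega^{p-1}_X) \right)$; in any case it splits as an ambient part plus a part in $\text{Prim}^{p, q}(X)$ (Lefschetz decomposition in the sense of \cite[(9.2)]{Lewis:1999aa}).

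The two parts are handled separately.
\begin{itemize}
\item \emph{Primitive part.} By \cite[9.31.~Prop.]{Lewis:1999aa}, $\text{Prim}^{p, q}(X)$ is exhausted by $\operatorname{Res}_X$ of the images of $\omega_A$ under the chain of boundary maps. By Lemma \ref{inductive_construction}, $s^{(q)}$ represents $(-1)^{\binom{q+1}{2}}$ times that image. Moreover $\operatorname{Res}_X(s)$ is a total cocycle with nonzero entries only in rows $\geq p$, since $s^{(i)}$ has form degree $n - i \geq p + 1$. Its bottom component is $\operatorname{Res}_X(s^{(q)})$. So a suitable linear combination $\sigma \in \mathcal{R}_q$ of classes $\pm[\operatorname{Res}_X(s)]$ matches the primitive part of $[t^{(q)}]$.
\item \emph{Ambient part.} This part is nonzero only when $n - 1 = 2p$, and is then a multiple of $h^p|_X$. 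I would represent $h^p$ on $\mathbb{P}^n$ by a Čech $p$-cocycle of $\partial$-closed holomorphic $p$-forms, for example the $p$-fold Čech cup product of $\partial \log(Z_i / Z_j)$. Such a cochain is a total cocycle concentrated in cell $(p, p)$, and its restriction $\alpha$ lies in $\mathcal{A}$ and in rows $\geq p$.
\end{itemize}

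The difference $t' \coloneqq t - \sigma - \alpha$ then has bottom component $\delta$-exact, say $t'^{(q)} = \delta u$ with $u \in \check{C}^{q-1}(\mathscr{U} \cap X, \Omega^p_X)$. Subtracting the total coboundary of $\pm u$, with the sign dictated by the convention of \cite[Def.~F.87]{Gortz:2023aa}, kills the bottom component. It only alters the $(q - 1)$ component, in row $p + 1$, so $t'$ becomes cohomologous to a cocycle supported in rows $\geq p + 1$. Hence $[t'] \in F^{p+1}$, and the induction hypothesis finishes the step.

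I expect the main obstacle to be the identification in the primitive step. I must check that Lewis's statement, which concerns graded pieces and his $\text{Prim}^{p,q}$, really says that every class in $H^q(X, \Omega^p_X)$ arising as the bottom component of a total cocycle equals an ambient class plus a residue class of the above kind. This requires checking that Lewis's $\text{Prim}^{p,q}$ together with the ambient line spans all of $H^q(X, \Omega^p_X)$, for instance via the Lefschetz hyperplane theorem and the Bott vanishing input already cited. The sign bookkeeping between $\operatorname{Res}_X$ and Lemma \ref{inductive_construction} should be harmless, since only spans matter. If the full-span statement is unavailable without Deligne's theorem, I would instead restrict the claim to bottom components of total cocycles, which is all the argument uses.
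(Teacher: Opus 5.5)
Your proof is correct and is essentially the paper's argument. Like you, the paper shows that $\mathcal{S} = \mathcal{A} + \sum_{q} \mathcal{R}_q$ meets ${}_{II}F^p$ surjectively onto $\operatorname{gr}^p$: it matches the class of the bottom component in $\check{H}^q(\mathscr{U} \cap X, \Omega^p_X)$ with a residue class from Lemma \ref{inductive_construction} and the argument of \cite[9.31.~Prop.]{Lewis:1999aa}, plus, when $p = q$, a $\partial$-closed ambient Čech cocycle in a single cell; it then runs the induction on the Hodge filtration that you spell out, and your bottom-component version is exactly the variant in Remark \ref{e1} that avoids $E_1$-degeneration on $X$.

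The spanning fact you flag as the main obstacle is what the paper takes from \cite[9.27.~Rem.]{Lewis:1999aa} and \cite[9.28.~Prop.]{Lewis:1999aa}. The residue exact sequence gives $t = \operatorname{Res}_X(r) + j^*(a)$ for every $t \in H^q(X, \Omega^p_X)$, so you need neither a Hodge-theoretic Lefschetz decomposition nor your fallback. One small adjustment: build the ambient representative on the given cover $\mathscr{U}$, e.g.\ as Lewis's $(\delta\beta / 2\pi i d)^{\smile p}$, since $\partial \log(Z_i/Z_j)$ presumes the coordinate cover.
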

\begin{proof}
We recall the row-based filtration $_{II}F$ on $\check{C}(\mathscr{U} \cap X, \Omega^\bullet_X)$ (see \cite[(F.25.1)]{Gortz:2023aa}) and its associated spectral sequence. By \cite[(F.25.2)]{Gortz:2023aa}, we have the first page $_{II}E_1^{p, q} = \check{H}^q\left( \mathscr{U} \cap X, \Omega^p_X \right)$. In order to make this proof simpler, we admit the degeneration at $E_1$ of the standard Hodge--de Rham spectral sequence on $X$ (though cf.\ Remark \ref{e1}). We abbreviate $\mathcal{S} \coloneqq \mathcal{A} + \sum_{q = 0}^{n - 1} \mathcal{R}_q$.

We fix a pair $p + q = n - 1$. We argue that the restriction of the quotient map ${}_{II}F^p \check{H}^{n - 1}\left( \mathscr{U} \cap X, \Omega^\bullet_X \right) \to \operatorname{gr}^p_{_{II}F} \check{H}^{n - 1}\left( \mathscr{U} \cap X, \Omega^\bullet_X \right)$ to the subspace $\mathcal{S} \cap {}_{II}F^p \check{H}^{n - 1}\left( \mathscr{U} \cap X, \Omega^\bullet_X \right)$ is surjective. To do this, we let $t \in \operatorname{gr}^p_{_{II}F} \check{H}^{n - 1}\left( \mathscr{U} \cap X, \Omega^\bullet_X \right) \cong \check{H}^q \left( \mathscr{U} \cap X, \Omega^p_X \right)$ be arbitrary. Applying Lewis \cite[9.27.~Rem.]{Lewis:1999aa} and \cite[9.28.~Prop.]{Lewis:1999aa}, we obtain an expression $t = \operatorname{Res}_X(r) + j^*(a)$ in $\check{H}^q \left( \mathscr{U} \cap X, \Omega^p_X \right)$, for $r \in \check{H}^q \left( \mathscr{U}, \Omega_{\mathbb{P}^n}^{p + 1}(\log X) \right)$ 
and $a \in \check{H}^q\left( \mathscr{U}, \Omega^p_{\mathbb{P}^n} \right)$. We handle these summands one at a time. 

As for $\operatorname{Res}_X(r)$, by the argument of \cite[9.31.~Prop.]{Lewis:1999aa}, 
our Lemma \ref{inductive_construction}'s second sentence implies that, for some total cocycle $s = (s^{(0)}, \ldots , s^{(q)}, 0, \ldots , 0)$ output by Construction \ref{construction}, $\operatorname{Res}_X(s^{(q)})$'s class in $\check{H}^q \left( \mathscr{U} \cap X, \Omega^p_X \right)$ equals $\operatorname{Res}_X(r)$. Further, the total cohomology class of $\operatorname{Res}_X(s)$ resides in $\mathcal{R}_q$ by definition of the latter set, and in any case $\mathcal{R}_q \subset {}_{II}F^p \check{H}^{n - 1}\left( \mathscr{U} \cap X, \Omega^\bullet_X \right)$ holds essentially by inspection of Construction \ref{construction}. This implies that $\operatorname{Res}_X(r)$ is in the image of $\mathcal{S} \cap {}_{II}F^p \check{H}^{n - 1}\left( \mathscr{U} \cap X, \Omega^\bullet_X \right) \to \operatorname{gr}^p_{_{II}F} \check{H}^{n - 1}\left( \mathscr{U} \cap X, \Omega^\bullet_X \right)$.

As for $a$, if $p \neq q$, then $a = 0$ and there is nothing to show. Otherwise, \cite[9.23.~Lem.]{Lewis:1999aa} shows that, explicitly, we may concretely represent the hyperplane class $a \in \check{H}^p\left( \mathscr{U}, \Omega^p_{\mathbb{P}^n} \right)$ by the Čech cocycle $\alpha = \left( \frac{\delta \beta}{2\pi i d} \right)^{\smile p}$, where $\beta \in \check{C}^0\left( \mathscr{U}, \Omega^1_{\mathbb{P}^n}(\log X) \right)$ is a further cochain explicitly constructed in \cite[9.24.~Sub.]{Lewis:1999aa} (with holomorphic coboundary). The $p$-cochain $\alpha$ is of course $\delta$-closed; as it happens, one can check directly that it's moreover $\partial$-closed (since $\beta$ itself is), and so immediately represents a totally-closed cochain---say, $\tilde{\alpha}$---in $\check{C}^{n - 1}\left( \mathscr{U}, \Omega^\bullet_{\mathbb{P}^n} \right)$. The total cohomology class of $j^*(\tilde{\alpha})$ of course lives in $\mathcal{A} \cap {}_{II}F^p \check{H}^{n - 1}\left( \mathscr{U} \cap X, \Omega^\bullet_X \right)$ (only $\tilde{\alpha}$'s $(p, q)$\textsuperscript{th} component is nonzero), and so again belongs to $\mathcal{S} \cap {}_{II}F^p \check{H}^{n - 1}\left( \mathscr{U} \cap X, \Omega^\bullet_X \right)$; moreover, its image in $\operatorname{gr}^p_{_{II}F} \check{H}^{n - 1}\left( \mathscr{U} \cap X, \Omega^\bullet_X \right)$ is $j^*(a)$. We've shown that $t$ is in the image of $\mathcal{S} \cap {}_{II}F^p \check{H}^{n - 1}\left( \mathscr{U} \cap X, \Omega^\bullet_X \right)$.

The surjections $\mathcal{S} \cap {}_{II}F^p \check{H}^{n - 1}\left( \mathscr{U} \cap X, \Omega^\bullet_X \right) \to \operatorname{gr}^p_{_{II}F} \check{H}^{n - 1}\left( \mathscr{U} \cap X, \Omega^\bullet_X \right)$, for $p \in \{0, \ldots , n - 1\}$, being established, what remains of the theorem is a standard induction on the Hodge filtration, which we skip.
\end{proof}

\begin{remark} \label{e1}
Our proof above could have gotten away with refraining from using the Hodge--de Rham degeneration at $E_1$ on $X$. To do as much, it should have instead let $t \in \operatorname{gr}^p_{_{II}F} \check{H}^{n - 1}\left( \mathscr{U} \cap X, \Omega^\bullet_X \right) \cong {}_{II}Z_\infty^{p, q} \mathbin{/} {}_{II}B_\infty^{p, q}$, a group that's of course \textit{a priori} unequal to ${}_{II}E_1^{p, q} \cong \check{H}^q\left( \mathscr{U} \cap X, \Omega^p_X \right)$. Lifting $t$ arbitrarily to ${}_{II}Z_\infty^{p, q} \mathbin{/} {}_{II}B_1^{p, q}$ and using ${}_{II}Z_\infty^{p, q} \mathbin{/} {}_{II}B_1^{p, q} \subset {}_{II}Z_1^{p, q} \mathbin{/} {}_{II}B_1^{p, q}$, that proof could have applied \cite[9.27.~Rem.]{Lewis:1999aa} rather on that lift of $t$. The preimages $\operatorname{Res}_X(s)$ and $j^*(\tilde{\alpha})$ would then have hit said lift modulo ${}_{II}B_1^{p, q}$, and so \textit{a fortiori} modulo ${}_{II}B_\infty^{p, q}$, establishing in any case the required surjectivity onto $\operatorname{gr}^p_{_{II}F} \check{H}^{n - 1}\left( \mathscr{U} \cap X, \Omega^\bullet_X \right)$.
\end{remark}

\begin{remark} \label{remark}
More importantly, our proof above---and in fact, this entire section---refrains from invoking \textit{Deligne's theorem}, the deeper fact whereby the spectral sequence $_{II}E_1^{p + 1, q} = \check{H}^q\left( \mathscr{U}, \Omega^{p + 1}_{\mathbb{P}^n}(\log X) \right)$ itself degenerates at $E_1$ (see \cite[Thm.~8.35~(b)]{Voisin:2002aa}). In this respect, our work differs from Voisin's treatment \cite[Thm.~6.5]{Voisin:2003aa} of Griffiths's methods. In fact, this work's analytic input is strikingly minimal. We essentially use just the holomorphic Poincaré lemma (recall \cite[Thm.~B.18]{Peters:2008aa}), Cartan's Theorem B (see \cite[Ex.~B.17]{Peters:2008aa}) and the tiny analytic datum furnished by \cite[9.24.~Sub.]{Lewis:1999aa}; we \textit{avoid} using not just Hodge's and Deligne's theorems but further the fact that $\mathbb{H}^n\left( \mathbb{P}^n, \Omega^\bullet_{\mathbb{P}^n}(\log X) \right) \cong H^n\left( \mathbb{P}^n \setminus X, \mathbb{C} \right)$, itself a consequence of the quasi-isomorphism of $\Omega^\bullet_{\mathbb{P}^n}(\log X) \to i_*\Omega^\bullet_{\mathbb{P}^n \setminus X}$
(see \cite[Thm.~4.2~1)]{Peters:2008aa} and \cite[Prop.~4.3]{Peters:2008aa}).

Our proof also bypasses the topological characterization of the residue map---developed in, say, Voisin \cite[\S~6.1.1]{Voisin:2003aa}---whereby the map $\mathbb{H}^n\left( \mathbb{P}^n, \Omega^\bullet_{\mathbb{P}^n}(\log X) \right) \to \mathbb{H}^{n - 1}\left( X, \Omega^\bullet_X \right)$ induced by $\operatorname{Res}_X : \Omega^{p + 1}_{\mathbb{P}^n}(\log X) \to j_*\Omega^p_X$ identifies with 
the map $H^n\left( \mathbb{P}^n \setminus X, \mathbb{C} \right) \to H^{n + 1}\left( \mathbb{P}^n, \mathbb{P}^n \setminus X, \mathbb{C} \right) \cong H^{n + 1}\left( T, \partial T, \mathbb{C} \right) \cong H^{n - 1}\left(X, \mathbb{C} \right)$. Here, $T$ is a tubular neighborhood of $X$ in $\mathbb{P}^n$ and the last isomorphism is the Thom isomorphism. We refer to \cite[\S~6.1.1]{Voisin:2003aa} for a detailed discussion of this topological characterization.
\end{remark}

\begin{remark} \label{weak_form}
It can be shown that our Lemma \ref{inductive_construction} implies a weak form of Deligne's theorem whereby, in the spectral sequence $_{II}E_1^{p + 1, q} = \check{H}^q\left( \mathscr{U}, \Omega^{p + 1}_{\mathbb{P}^n}(\log X) \right)$, for each $p + q = n - 1$, the containments ${}_{II}Z_\infty^{p + 1, q} \subset \cdots \subset {}_{II}Z_1^{p + 1, q}$ are equalities (it implies something similar for the Hodge--de Rham spectral sequence on $X$). On the other hand, our proof doesn't imply that ${}_{II}B_1^{p + 1, q} = \cdots = {}_{II}B_\infty^{p + 1, q}$, whereas of course Deligne's theorem \cite[Thm.~8.35~(b)]{Voisin:2002aa} does; nor does our proof touch the spectral sequence's off-diagonal cells.
\end{remark}

\subsection{The Vanishing Criterion} \label{criterion}

In the theorem below, we isolate a simple criterion that detects the exactness (up to holomorphic error) of a total cochain $s$ output by Construction \ref{construction} on a principal open set $U \subset \mathbb{P}^n$. We abbreviate $C_U \coloneqq \check{C}(\mathscr{U} \cap U, \Omega^\bullet_{\mathbb{P}^n}(\log X))$ for the total complex attached to the logarithmic complex on $U \subset \mathbb{P}^n$.
We recall the \textit{residue} exact sequence
\begin{equation*}0 \to \Omega^\ell_{\mathbb{P}^n} \to \Omega^\ell_{\mathbb{P}^n}(\log X) \xlongrightarrow{\operatorname{Res}} \Omega^{\ell - 1}_X\to 0,\end{equation*}
valid for each $\ell \in \{1, \ldots , n\}$ (see \cite[p.~131]{Lewis:1999aa}). 
Since the residue map annihilates exactly the holomorphic forms, and is surjective, $\left. \operatorname{Res}_X(s)\right|_U$ 
is exact in $\check{C}(\mathscr{U} \cap X \cap U, \Omega^\bullet_X)$ if and only if there exists a total primitive $\tilde{s} = (\tilde{s}^{(0)}, \ldots , \tilde{s}^{(n - 1)})$ in $\pi_{n - 1}(C_U)$ such that $\left. s \right|_U - d\tilde{s}$ is componentwise-holomorphic (i.e., takes values in $\pi_n \left( \check{C}(\mathscr{U} \cap U, \Omega^\bullet_{\mathbb{P}^n}) \right)$).

For typographical convenience, we decline below to typographically indicate the fact of cochains' having been restricted to $U$; that is, we identify the cochains $\omega^{(0)}, \ldots , \omega^{(q)}$ and $\vartheta^{(0)}, \ldots , \vartheta^{(q - 1)}$ with their respective restrictions to $U \subset \mathbb{P}^n$ throughout this subsection. 

We fix $X \subset \mathbb{P}^n$, $p + q = n - 1$, and $A \in R^{d \cdot (q + 1) - (n + 1)}$, and write $s = (s^{(0)}, \ldots , s^{(q)}, 0, \ldots , 0)$ for the corresponding total cocycle output by Construction \ref{construction}. 
For the entirety of this subsection, we moreover assume $q \geq 1$. While the results of this subsection remain true when $q = 0$, that case must be proven differently, and in any case is irrelevant to our application.

\begin{theorem} \label{main}
There exists a cochain $\tilde{s} = \left( \tilde{s}^{(0)}, \ldots , \tilde{s}^{(n - 1)} \right)$ in $\pi_{n - 1}(C_U)$
such that $s|_U - d\tilde{s}$ is everywhere-holomorphic 
if and only if there exists an $n - 1$-form $\beta \in \Gamma\left( U, \Omega^{n - 1}_{\mathbb{P}^n}(qX) \right)$ such that $\left. \omega_A \right|_U - \partial \beta \in \Gamma\left( U, \Omega^n_{\mathbb{P}^n} \right)$.
\end{theorem}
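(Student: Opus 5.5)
We prove the two implications separately. Both are Čech-cohomological diagram chases on the double complex of $C_U$, run along the staircase of Construction \ref{construction}. Throughout, we use that on $U$ every positive-degree Čech cohomology of every coherent sheaf at hand vanishes with respect to $\mathscr{U} \cap U$, as recorded in the Background.

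\emph{Sufficiency.} Assume $\beta \in \Gamma(U, \Omega^{n-1}_{\mathbb{P}^n}(qX))$ with $\omega_A - \partial\beta$ holomorphic. Regard $\beta$ as a Čech $0$-cochain. Compare it with $\vartheta^{(0)}$: both $\partial\beta$ and $\partial\vartheta^{(0)}$ agree with $\omega^{(0)}$ modulo forms of pole order at most $1$ (logarithmic ones, in the case of $\vartheta^{(0)}$). Hence $\beta - \vartheta^{(0)}$ is $\partial$-closed modulo low pole order.

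We then run the pole-reduction loop of Construction \ref{construction}, in the direction opposite to that construction. Using the exact sequences $\Omega^{p+\ell-1}(hX)\to \Omega^{p+\ell}((h+1)X)/\Omega^{p+\ell}(hX)\to\cdots$, we write $\beta - \vartheta^{(0)} = \partial\zeta^{(0)} + (\text{log-pole part})$ with $\zeta^{(0)}$ of controlled pole order. Next we pass to column $1$: $\delta(\beta - \vartheta^{(0)} - \partial\zeta^{(0)}) = -\delta\vartheta^{(0)} - \partial\delta\zeta^{(0)}$, which up to sign is $\omega^{(1)}$ plus an exact term. We repeat the same lifting with $\vartheta^{(1)}$, and so on down the staircase, producing $\tilde{s}$ with entries $\tilde{s}^{(k)} = \beta^{(k)} - (-1)^k\vartheta^{(k)} - \partial\zeta^{(k)}$, following the pattern shown in the paper's figure. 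The pole orders drop by one at each step, so at column $q$ everything is logarithmic. A direct check, using the Čech relations $\omega^{(k+1)} = (-1)^{k+1}\delta\vartheta^{(k)}$, shows that $s - d\tilde{s}$ is holomorphic.

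\emph{Necessity.} Given $\tilde{s}$ with $s - d\tilde{s}$ holomorphic, we descend from column $q$ to column $0$. At column $q$, the component $\omega^{(q)} - (\pm\delta\tilde{s}^{(q-1)} + \partial\tilde{s}^{(q)})$ is holomorphic. Since Čech cohomology on $U$ vanishes in positive degree, we can write $\tilde{s}^{(q)}$, modulo holomorphic and $\partial$-closed pieces, as a $\delta$-coboundary, and absorb it into $\tilde{s}^{(q-1)}$. The key step is to show inductively that each $\tilde{s}^{(k)}$ can be replaced by $\pm\vartheta^{(k)}$ plus a $\delta$-closed part plus a $\partial$-closed part, so that the corrected primitive is concentrated in column $0$. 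Each replacement uses Theorem B to lift $\delta$-cocycles to $\delta$-coboundaries of cochains of pole order one higher, and this is where the bound ``pole order $\le q$'' accumulates: it goes up by one per column, starting from logarithmic. At column $0$, a $\delta$-closed $0$-cochain is a global section $\beta \in \Gamma(U, \Omega^{n-1}(qX))$, and the column-$0$ equation reads $\omega_A - \partial\beta$ holomorphic.

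The main obstacle is this necessity direction. The difficulty is the bookkeeping needed to show that each corrected $\tilde{s}^{(k)}$ splits as ($\delta$-closed) $+$ ($\partial$-closed) with the right pole orders and signs, so that the Čech-vanishing lifts can be performed without raising the pole order beyond $q$. I would organize it as a downward induction whose hypothesis is exactly this decomposition.
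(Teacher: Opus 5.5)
\paragraph{Sufficiency.} Your sufficiency direction is essentially the paper's Construction~\ref{sufficiency}: the same $\zeta^{(k)}$, the same $\beta^{(k)} = (-1)^k\delta\zeta^{(k-1)}$, and the same output entries $\beta^{(k)} - (-1)^k\vartheta^{(k)} - \partial\zeta^{(k)}$. One small correction: the last nonzero entry sits in column $q-1$, not $q$. It is logarithmic because it has simple poles and logarithmic $\partial$.

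\paragraph{Necessity.} The overall plan is the paper's. It splits $\varsigma^{(k)} = \tilde{s}^{(k)} + (-1)^k\vartheta^{(k)}$, after a correction, into a $\delta$-closed plus a $\partial$-closed part, by a downward induction that uses Theorem~B on the affine $U$. As written, however, the argument does not go through, for two reasons.

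First, you start the descent at column $q$. But $\tilde{s}$ is an arbitrary element of $\pi_{n-1}(C_U)$, and its components $\tilde{s}^{(q+1)}, \ldots, \tilde{s}^{(n-1)}$ need not vanish. Whether $\tilde{s}^{(q)}$ is a \v{C}ech cocycle is governed not by the column-$q$ equation you quote. It is governed by the column-$(q+1)$ component of $s|_U - d\tilde{s}$, which only says that $\delta\tilde{s}^{(q)} + (-1)^{q+1}\partial\tilde{s}^{(q+1)}$ is holomorphic. So nothing can be done at column $q$ until $\tilde{s}^{(q+1)}$ has been split, and so on upward. The induction must start at column $n-1$, where $s^{(n)} = 0$ makes $\delta\tilde{s}^{(n-1)}$ holomorphic. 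It then passes through the logarithmic range $\mathcal{K}_\ell = \Omega^{n-\ell}_{\mathbb{P}^n}(\log X)$, $\ell > q$, before any pole order grows.

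Second, and more substantively, you only tolerate the holomorphic defects (``modulo holomorphic pieces''); you never remove them. Yet the final column-$0$ cochain must be \emph{exactly} $\delta$-closed to glue to a section of $\Omega^{n-1}_{\mathbb{P}^n}(qX)$ over $U$.
\begin{itemize}
\item Your invariant ``$\tilde{s}^{(k)} = \pm\vartheta^{(k)} + (\delta\text{-closed}) + (\partial\text{-closed})$'' is false in general. Adding any holomorphic cochain to $\tilde{s}$ preserves the hypothesis. At the top column, $\vartheta^{(n-1)} = 0$ and a $\partial$-closed $0$-form cochain is locally constant. So the invariant would force the holomorphic cochain $\delta\tilde{s}^{(n-1)}$ to be the coboundary of a locally constant cochain, which need not hold.
\item The missing step is the first loop of Construction~\ref{necessity}. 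It produces holomorphic $\xi^{(\ell)} \in \check{C}^\ell(\mathscr{U}\cap U, \Omega^{n-\ell-1}_{\mathbb{P}^n})$ by lifting the defect $\psi^{(\ell+1)} - \delta\varsigma^{(\ell)}$, where $\psi^{(\ell)} = -(-1)^\ell\partial(\varsigma^{(\ell)} + \xi^{(\ell)})$.
\item This lift uses the vanishing of $\check{H}^{\ell+1}(\mathscr{U}\cap U, \Omega^{n-\ell-1}_{\mathbb{P}^n})$. It also needs a proof that the defect is a \v{C}ech cocycle, which rests on $\delta\psi^{(\ell+1)} = \pm\partial\psi^{(\ell+2)} = 0$.
\item Afterwards $\delta(\varsigma^{(\ell-1)} + \xi^{(\ell-1)}) = -(-1)^\ell\partial(\varsigma^{(\ell)} + \xi^{(\ell)})$ holds on the nose. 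Only then does the second loop lift $\varsigma^{(\ell)} + \xi^{(\ell)} - \tau^{(\ell)}$ to $\rho^{(\ell-1)}$, and output $\beta = \varsigma^{(0)} + \xi^{(0)} - \tau^{(0)}$.
\item The two loops could be interleaved, but the $\xi$'s cannot be omitted.
\end{itemize}

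\paragraph{Pole-order bookkeeping.} You attribute the extra pole order to the lift, and that is misplaced. The lift $\rho^{(\ell-1)}$ must be taken in the \emph{same} sheaf $\mathcal{K}_{\ell+1}$ as the cocycle it lifts, using Theorem~B for that locally free sheaf. The one extra order per column comes from $\tau^{(\ell-1)} = (-1)^{\ell-1}\partial\rho^{(\ell-1)} \in \mathcal{K}_\ell$. Membership $\varsigma^{(\ell-1)} \in \mathcal{K}_\ell$ follows from the pole orders of $\vartheta$ in Construction~\ref{construction}. If the lift itself raised the order as well, you would lose two orders per column and overshoot $q$.
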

\begin{proof}
We take the theorem's two implications one at a time.

\paragraph{Sufficiency.} 

The sufficiency of the condition of the theorem is established by the following construction. 

\begin{construction}[Construction of Total Primitive] \label{sufficiency}
\textit{Input.} An element $\beta \in \Gamma\left( U, \Omega^{n - 1}_{\mathbb{P}^n}(qX) \right)$ such that $\left. \omega_A \right|_U - \partial \beta \in \Gamma\left( U, \Omega^n_{\mathbb{P}^n} \right)$. \\ 
\textit{Output.} A total cochain $\tilde{s} = (\tilde{s}^{(0)}, \ldots , \tilde{s}^{(q - 1)}, 0, \ldots , 0)$ in $\pi_{n - 1}(C_U)$ such that $\left. s \right|_U - d\tilde{s}$ is holomorphic.
\begin{itemize}
\item Initialize the Čech 0-cycle $\beta^{(0)} \coloneqq \left\{ \left. \beta \right|_{U_{j} \cap U} \right\}_{j}$ in $\check{C}^0\left( \mathscr{U} \cap U, \Omega_{\mathbb{P}^n}^{n - 1}(qX) \right)$.
\item \textbf{for} $\ell \in \{q - 1, \ldots , 1\}$ \textbf{do}:
\begin{itemize}
\item Initialize $\alpha^{(q - \ell - 1)}_{\ell + 1} \coloneqq 0$.
\item \textbf{for} $h \in \{\ell, \ldots , 1\}$ \textbf{do}:
\begin{itemize}
\item By induction on $\ell$ (see below), $\partial\beta^{(q - \ell - 1)} - (-1)^{q - \ell - 1} \cdot \partial \vartheta^{(q - \ell - 1)}$ has logarithmic poles along $X$. 
Moreover, by induction on $h$, $\beta^{(q - \ell - 1)} - (-1)^{q - \ell - 1} \cdot \vartheta^{(q - \ell - 1)} - \partial\alpha^{(q - \ell - 1)}_{h + 1} \in \check{C}^{q - \ell - 1}\left( \mathscr{U} \cap U, \Omega_{\mathbb{P}^n}^{p + \ell + 1}\left( (h + 1)X \right) \right)$ has order-$h + 1$ poles at most.
Thus, again using
\begin{equation*}\Omega_{\mathbb{P}^n}^{p + \ell}(hX) \xlongrightarrow{\partial} \Omega_{\mathbb{P}^n}^{p + \ell + 1}((h + 1)X) \mathbin{/} \Omega_{\mathbb{P}^n}^{p + \ell + 1}(hX) \xlongrightarrow{\partial} \Omega_{\mathbb{P}^n}^{p + \ell + 2}((h + 2)X) \mathbin{/} \Omega_{\mathbb{P}^n}^{p + \ell + 2}((h + 1)X),\end{equation*}
lift $\beta^{(q - \ell - 1)} - (-1)^{q - \ell - 1} \cdot \vartheta^{(q - \ell - 1)} - \partial\alpha^{(q - \ell - 1)}_{h + 1}$ to $\gamma^{(q - \ell - 1)}_h \in \check{C}^{q - \ell - 1}\left( \mathscr{U} \cap U, \Omega_{\mathbb{P}^n}^{p + \ell}\left( hX \right) \right)$,
say,
such that $\beta^{(q - \ell - 1)} - (-1)^{q - \ell - 1} \cdot \vartheta^{(q - \ell - 1)} - \partial\alpha^{(q - \ell - 1)}_{h + 1} - \partial\gamma^{(q - \ell - 1)}_h$ has order-$h$ poles. Update $\alpha^{(q - \ell - 1)}_h \coloneqq \alpha^{(q - \ell - 1)}_{h + 1} + \gamma^{(q - \ell - 1)}_h$.
\end{itemize}
\item Define $\zeta^{(q - \ell - 1)} \coloneqq \alpha^{(q - \ell - 1)}_1$ in $\check{C}^{q - \ell - 1}\left( \mathscr{U} \cap U, \Omega_{\mathbb{P}^n}^{p + \ell}(\ell X) \right)$.
\item Further, define $\beta^{(q - \ell)} \coloneqq (-1)^{q - \ell} \cdot \delta \zeta^{(q - \ell - 1)}$ in $\check{C}^{q - \ell}\left( \mathscr{U} \cap U, \Omega_{\mathbb{P}^n}^{p + \ell}(\ell X) \right)$.
\end{itemize}
\item Output $\bigl( \beta^{(0)} - \vartheta^{(0)} - \partial\zeta^{(0)}, \ldots , \beta^{(q - 2)} - (-1)^{q - 2} \cdot \vartheta^{(q - 2)} - \partial\zeta^{(q - 2)}, \beta^{(q - 1)} - (-1)^{q - 1} \cdot \vartheta^{(q - 1)}, 0, \ldots , 0 \bigr)$. 
\end{itemize}
\end{construction}


\begin{lemma} \label{correctness_primitive}
The total cochain $\tilde{s} = (\tilde{s}^{(0)}, \ldots , \tilde{s}^{(q - 1)}, 0, \ldots , 0)$ output by Construction \ref{sufficiency} takes values in the log complex $\check{C}(\mathscr{U} \cap U, \Omega^\bullet_{\mathbb{P}^n}(\log X))$, and $s|_U - d\tilde{s}$ is everywhere-holomorphic (i.e., valued in $\check{C}(\mathscr{U} \cap U, \Omega^\bullet_{\mathbb{P}^n})$).
\end{lemma}
\begin{proof}
We again prove this result inductively. For each $\ell \in \{q, \ldots , 1\}$, we prove that the following inductive invariants hold.
\begin{enumerate}
\item\label{closed_primitive} $\beta^{(q - \ell)} \in \check{C}^{q - \ell}\left( \mathscr{U} \cap U, \Omega^{p + \ell}_{\mathbb{P}^n}(\ell X) \right)$ is Čech-closed.
\item\label{important_primitive} $\partial\beta^{(q - \ell)} - (-1)^{q - \ell} \cdot \partial\vartheta^{(q - \ell)} \in \check{C}^{q - \ell}\left( \mathscr{U} \cap U, \Omega^{p + \ell + 1}_{\mathbb{P}^n}(\log X) \right)$ has logarithmic poles.
\item\label{closedness_primitive} The cochain $\bigl( \beta^{(0)} - \vartheta^{(0)} - \partial\zeta^{(0)}, \ldots , \beta^{(q - \ell - 1)} - (-1)^{q - \ell - 1} \cdot \vartheta^{(q - \ell - 1)} - \partial\zeta^{(q - \ell - 1)}, \beta^{(q - \ell)} - (-1)^{q - \ell} \cdot \vartheta^{(q - \ell)}, -(-1)^{q - \ell + 1} \cdot \vartheta^{(q - \ell + 1)}, \ldots, -(-1)^{q - 1} \cdot \vartheta^{(q - 1)}, 0, \ldots , 0 \bigr)$ 
 is a total primitive of $s$, up to holomorphic error in the top term. Its first $q - \ell$ components have logarithmic poles; its next $\ell$ items have poles of order $(\ell, \ell - 1, \ldots , 1)$.
\end{enumerate}
Granting these conditions and carrying through the induction, we immediately obtain the statement of the lemma. Indeed, while the condition \ref{closedness_primitive} \textit{prima facie} implies merely that $ \beta^{(q - 1)} - (-1)^{q - 1} \cdot \vartheta^{(q - 1)}$'s poles are simple, the further condition \ref{important_primitive} implies that that quantity's differential's poles are logarithmic, so that $\beta^{(q - 1)} - (-1)^{q - 1} \cdot \vartheta^{(q - 1)}$'s poles are in fact logarithmic.

In the base case $\ell = q$, all of the inductive conditions above hold trivially except for the last. As for that one, we must compare $\left( \omega^{(0)} - \partial\vartheta^{(0)}, \ldots , \omega^{(q - 1)} - \partial\vartheta^{(q - 1)}, \omega^{(q)}, 0, \ldots , 0 \right)$ (or rather its restriction to $U$) to the total differential of $\left( \beta^{(0)} - \vartheta^{(0)}, -(-1)^1 \cdot \vartheta^{(1)}, \ldots, -(-1)^{q - 1} \cdot \vartheta^{(q - 1)}, 0, \ldots, 0 \right)$. At each positive-indexed component, these cochains agree. This is an explicit calculation, which we skip.
As for the top-left component, we have the discrepancy
\begin{equation*}\left( \omega^{(0)} - \partial\vartheta^{(0)} \right) - \partial\left( \beta^{(0)} - \vartheta^{(0)} \right) = \left\{ \left. \left( \left. \omega_A\right|_U - \partial \beta \right) \right|_{U_i} \right\}_i,\end{equation*}
which is holomorphic by our hypothesis on $\beta$. Finally, the respective pole orders of the components $\left( \beta^{(0)} - \vartheta^{(0)}, -(-1)^1 \cdot \vartheta^{(1)}, \ldots, -(-1)^{q - 1} \cdot \vartheta^{(q - 1)} \right)$ of the initial cochain are (at worst) $(q, q - 1, \ldots , 1)$, essentially by Construction \ref{construction} and by our hypothesis on $\beta$.

To prove the inductive case, we fix $\ell \in \{q - 1, \ldots , 1\}$. 
The Čech-closedness of $\beta^{(q - \ell)} \coloneqq (-1)^{q - \ell} \cdot \delta \zeta^{(q - \ell - 1)}$ in $\check{C}^{q - \ell}\left( \mathscr{U} \cap U, \Omega_{\mathbb{P}^n}^{p + \ell}(\ell X) \right)$ is immediate.

Construction \ref{sufficiency} is designed just in such a way that $\beta^{(q - \ell - 1)} - (-1)^{q - \ell - 1} \cdot \vartheta^{(q - \ell - 1)} - \partial\zeta^{(q - \ell - 1)}$ has simple poles. Moreover, that quantity's differential is $\partial\beta^{(q - \ell - 1)} - (-1)^{q - \ell - 1}\cdot \partial\vartheta^{(q - \ell - 1)}$, whose poles are logarithmic by induction (see item \ref{important_primitive}). We conclude that $\beta^{(q - \ell - 1)} - (-1)^{q - \ell - 1} \cdot \vartheta^{(q - \ell - 1)} - \partial\zeta^{(q - \ell - 1)}$ has logarithmic poles. This establishes the inductive case of the second part of \ref{closedness_primitive}. As for the first, we note that the offset $\left( 0, \ldots , -\partial\zeta^{(q - \ell - 1)}, \beta^{(q - \ell)}, 0, \ldots , 0 \right)$ is again totally closed by construction; adding it to the total cochain so far constructed thus doesn't change that cochain's status as a total primitive of $s$ (up to holomorphic error).

To prove that Construction \ref{sufficiency}'s main loop preserves the inductive condition \ref{important_primitive}, we note that
\begin{align*}
(-1)^{q - \ell} \cdot \partial\beta^{(q - \ell)} - \partial\vartheta^{(q - \ell)} &= \partial\delta \zeta^{(q - \ell - 1)} - \partial\vartheta^{(q - \ell)} \\
&= \omega^{(q - \ell)} - \partial \vartheta^{(q - \ell)} - \omega^{(q - \ell)} + \delta \partial \zeta^{(q - \ell - 1)} \\
&= \omega^{(q - \ell)} - \partial \vartheta^{(q - \ell)} - (-1)^{q - \ell} \cdot \delta \vartheta^{(q - \ell - 1)} + \delta \partial \zeta^{(q - \ell - 1)}  \\
&= \underbrace{\left( \omega^{(q - \ell)} - \partial \vartheta^{(q - \ell)} \right)}_{\text{logarithmic poles}} \mathrel{-} \delta \underbrace{\left( \beta^{(q - \ell - 1)} - (-1)^{q - \ell - 1} \cdot \vartheta^{(q - \ell - 1)} - \partial\zeta^{(q - \ell - 1)} \right)}_{\text{logarithmic poles}},
\end{align*}
which proves \ref{important_primitive}, since $\partial\beta^{(q - \ell)} - (-1)^{q - \ell} \cdot \partial\vartheta^{(q - \ell)}$ and $(-1)^{q - \ell} \cdot \partial\beta^{(q - \ell)} - \partial\vartheta^{(q - \ell)}$ agree up to sign. That the left-hand summand just above's poles are logarithmic is an immediate consequence of the construction of $s$ (i.e., of the fact that that total cocycle is valued in the log complex $\check{C}\left( \mathscr{U}, \Omega^\bullet_{\mathbb{P}^n}(\log X) \right)$); that the right-hand summand's poles are logarithmic was just proven. In the last step above, our \textit{ex nihilo} introduction of the term $\delta \beta^{(q - \ell - 1)} = 0$ is justified on the basis of the inductive condition \ref{closed_primitive}. This calculation proves \ref{important_primitive} and completes the proof of the lemma.
\end{proof}

Lemma \ref{correctness_primitive} completes the proof of sufficiency.

\paragraph{Necessity.}

We turn to the necessity of the theorem's condition. We again make use of the cochain $s = (s^{(0)}, \ldots, s^{(q)}, 0, \ldots , 0) = \left( \omega^{(0)} - \partial\vartheta^{(0)} , \ldots , \omega^{(q - 1)} - \partial\vartheta^{(q - 1)} , \omega^{(q)}, 0, \ldots , 0 \right)$ output by Construction \ref{construction}.

We explain the idea of our construction below. Provided we write $\varsigma^{(0)} \coloneqq \tilde{s}^{(0)} + \vartheta^{(0)}$, our hypothesis on $\tilde{s}$ immediately establishes that $\left. \omega_A \right|_U - \partial \varsigma^{(0)} = \left( \left. \omega_A \right|_U - \partial\vartheta^{(0)} \right) - \partial \left( \varsigma^{(0)} - \vartheta^{(0)} \right) = s^{(0)} - \partial\tilde{s}^{(0)}$ is holomorphic. The Čech 0-cochain $\varsigma^{(0)}$ thus gives us something close to what we want; unfortunately, we have no reason to hope for that cochain's Čech-closedness. Instead, we express $\varsigma^{(0)}$---or more precisely, a perturbation of that quantity of the shape $\varsigma^{(0)} + \xi^{(0)}$, for $\xi^{(0)}$ holomorphic---as the sum of a $\delta$-closed cochain and a $\partial$-closed cochain. That desideratum is enough to imply our goal, since, it being accomplished, we may use the first of those two summands as $\beta$. On the other hand, we can achieve it only if the Čech differential of $\varsigma^{(0)} + \xi^{(0)}$ is itself the holomorphic differential of a quantity which has already been similarly split. This requirement leads to a recursive procedure of correction that ripples through the Čech double complex (actually two, since we first need to create the adjustment $\xi^{(0)}$ that makes this procedure possible).




Below, we use the notational device whereby, for each $\ell \in \{n, \ldots , 0\}$, we define $\mathcal{K}_\ell$ to be $\Omega^{n - \ell}_{\mathbb{P}^n}(\log X)$ if $\ell > q$ and $\Omega^{n - \ell}_{\mathbb{P}^n}((q + 1 - \ell)X)$ otherwise. We note that these sheaves are locally free of finite rank, and in particular coherent.
\begin{construction}[Construction of de Rham Primitive] \label{necessity}
\textit{Input.}  
A total cochain $\tilde{s} = (\tilde{s}^{(0)}, \ldots , \tilde{s}^{(n - 1)})$ in $\pi_{n - 1}(C_U)$ such that $\left. s \right|_U - d\tilde{s}$ is everywhere-holomorphic. \\
\textit{Output.} An element $\beta \in \Gamma\left( U, \Omega^{n - 1}_{\mathbb{P}^n}(qX) \right)$ such that $\left. \omega_A \right|_U - \partial \beta \in \Gamma\left( U, \Omega^n_{\mathbb{P}^n} \right)$.
\begin{itemize}
\item Define $\left( \varsigma^{(0)}, \ldots , \varsigma^{(n - 1)} \right) \coloneqq \left( \tilde{s}^{(0)}, \ldots , \tilde{s}^{(n - 1)} \right) + \left( \vartheta^{(0)}, (-1)^1 \cdot \vartheta^{(1)}, \ldots , (-1)^{q - 1} \cdot \vartheta^{(q - 1)}, 0, \ldots , 0 \right)$.
\item Initialize $\psi^{(n)} \coloneqq 0$ in $\check{C}^n \left( \mathscr{U} \cap U, \mathcal{O}_{\mathbb{P}^n} \right)$.
\item \textbf{for} $\ell \in \{n - 1, \ldots , 0\}$ \textbf{do}:
\begin{itemize}
\item Since $\check{H}^{\ell + 1}\left( \mathscr{U} \cap U, \Omega^{n - \ell - 1}_{\mathbb{P}^n} \right) = 0$ by the affineness of $U$, and since by induction on $\ell$ (see below) $\psi^{(\ell + 1)} - \delta \varsigma^{(\ell)} \in \check{C}^{\ell + 1}\left( \mathscr{U} \cap U, \Omega^{n - \ell - 1}_{\mathbb{P}^n} \right)$ is both holomorphic and Čech-closed, we can lift that latter element to an element, say $\xi^{(\ell)} \in \check{C}^\ell\left( \mathscr{U} \cap U, \Omega^{n - \ell - 1}_{\mathbb{P}^n} \right)$, under the map
\begin{equation*}\check{C}^\ell\left( \mathscr{U} \cap U, \Omega^{n - \ell - 1}_{\mathbb{P}^n} \right) \xlongrightarrow{\delta} \check{C}^{\ell + 1}\left( \mathscr{U} \cap U, \Omega^{n - \ell - 1}_{\mathbb{P}^n} \right).\end{equation*}
Define $\psi^{(\ell)} \coloneqq -(-1)^\ell \cdot \partial\left( \varsigma^{(\ell)} + \xi^{(\ell)} \right)$ in $\check{C}^\ell\left( \mathscr{U} \cap U, \mathcal{K}_\ell \right)$.
\end{itemize}
\item Initialize $\tau^{(n - 1)} \coloneqq 0$ in $\check{C}^{n - 1} \left( \mathscr{U} \cap U, \mathcal{O}_{\mathbb{P}^n} \right)$.
\item \textbf{for} $\ell \in \{n - 1, \ldots , 1\}$ \textbf{do}:
\begin{itemize}
\item Since $\check{H}^\ell\left( \mathscr{U} \cap U, \mathcal{K}_{\ell + 1} \right) = 0$ by the affineness of $U$, and since, by induction on $\ell$ (see below) $\varsigma^{(\ell)} + \xi^{(\ell)} - \tau^{(\ell)}$ takes values in $\check{C}^\ell\left( \mathscr{U} \cap U, \mathcal{K}_{\ell + 1} \right)$ and is Čech-closed, we can lift that element to an element, say $\rho^{(\ell - 1)} \in \check{C}^{\ell - 1}\left( \mathscr{U} \cap U, \mathcal{K}_{\ell + 1} \right)$, under the map
\begin{equation*}\check{C}^{\ell - 1}\left( \mathscr{U} \cap U, \mathcal{K}_{\ell + 1} \right) \xlongrightarrow{\delta} \check{C}^\ell\left( \mathscr{U} \cap U, \mathcal{K}_{\ell + 1} \right).\end{equation*}
Define $\tau^{(\ell - 1)} \coloneqq (-1)^{\ell - 1} \cdot \partial \rho^{(\ell - 1)}$ in $\check{C}^{\ell - 1} \left( \mathscr{U} \cap U, \mathcal{K}_\ell \right)$.
\end{itemize}
\item Output $\beta \coloneqq \varsigma^{(0)} + \xi^{(0)} - \tau^{(0)}$.
\end{itemize}
\end{construction}
\begin{lemma} \label{necessity_correctness}
The Čech 0-cochain $\beta$ output by Construction \ref{necessity} lives in $\check{C}^0\left( \mathscr{U} \cap U, \Omega^{n - 1}_{\mathbb{P}^n}(qX) \right)$. Further, it's Čech-closed, and so can be viewed as a global section in $\Gamma\left( U, \Omega^{n - 1}_{\mathbb{P}^n}(qX) \right)$. Finally, $\left. \omega_A \right|_U - \partial\beta \in \Gamma\left( U, \Omega^n_{\mathbb{P}^n} \right)$. 
\end{lemma}
\begin{proof}
We begin with Construction \ref{necessity}'s first loop. 
We argue that, for each $\ell \in \{n, \ldots , 0\}$, the following invariants hold throughout that loop.
\begin{enumerate}
\item\label{exact} $\psi^{(\ell)}$ is $\partial$-closed. 
\item\label{value} If $\ell > 0$, then $\psi^{(\ell)} - \delta \varsigma^{(\ell - 1)}$ is holomorphic and $\delta$-closed.
\end{enumerate}
In the base case $\ell = n$, \ref{exact} is trivially true, since $\psi^{(n)} = 0$ by construction. To prove \ref{value}, we note that, since $q < n$, $s^{(n)} = 0$ and $\tilde{s}^{(n - 1)} = \varsigma^{(n - 1)}$. We conclude that $s^{(n)} - \delta \tilde{s}^{(n - 1)} = \psi^{(n)} - \delta \varsigma^{(n - 1)}$; our hypothesis on $\tilde{s}$ guarantees that this item is holomorphic. Its Čech-closedness is self-evident.

To prove the inductive case, we fix an $\ell \in \{n - 1, \ldots , 0\}$. By the inductive case of \ref{value}, $\psi^{(\ell + 1)} - \delta \varsigma^{(\ell)}$ is holomorphic and Čech-closed, so that the construction of $\xi^{(\ell)}$ above makes sense. The $\partial$-closedness of $\psi^{(\ell)}$ is immediate. 
If $\ell > 0$, then, up to the notational convention whereby we write $\omega^{(q + 1)} \coloneqq \cdots \coloneqq \omega^{(n - 1)} \coloneqq 0$ and $\vartheta^{(q)} \coloneqq \cdots \coloneqq \vartheta^{(n - 1)} \coloneqq 0$, we have:
\begin{align*}
\psi^{(\ell)} - \delta\varsigma^{(\ell - 1)} &= -(-1)^\ell \cdot \partial \left( \varsigma^{(\ell)} + \xi^{(\ell)}\right) - \delta\varsigma^{(\ell - 1)} \\
&= \left( \omega^{(\ell)} - \partial\vartheta^{(\ell)} \right) - \delta \left( \varsigma^{(\ell - 1)} - (-1)^{\ell - 1} \cdot \vartheta^{(\ell - 1)} \right) - (-1)^\ell \cdot \partial \left( \varsigma^{(\ell)} - (-1)^\ell \cdot \vartheta^{(\ell)} + \xi^{(\ell)} \right) \\
&= \underbrace{\left( s^{(\ell)} - \delta\tilde{s}^{(\ell - 1)} - (-1)^\ell \cdot \partial \tilde{s}^{(\ell)} \right)}_{\text{holomorphic}} - (-1)^\ell \cdot \underbrace{\left( \partial\xi^{(\ell)} \right)}_{\mathclap{\text{holomorphic}}},
\end{align*}
a quantity whose summands are holomorphic (by our hypothesis on $\tilde{s}$ and by construction, respectively). This proves the first part of \ref{value}. As for that quantity's Čech-closedness, we note that $\delta \left( \psi^{(\ell)} - \delta \varsigma^{(\ell - 1)}\right) = \delta\psi^{(\ell)} = -(-1)^\ell \cdot \delta \partial\left( \varsigma^{(\ell)} + \xi^{(\ell)} \right) = -(-1)^\ell \cdot \partial\left( \delta \varsigma^{(\ell)} + \psi^{(\ell + 1)} - \delta \varsigma^{(\ell)} \right) = -(-1)^\ell \cdot \partial\psi^{(\ell + 1)} = 0$; in the last equality, we use the inductive case of \ref{exact}.

We turn to the second of Construction \ref{necessity}'s two loops. We claim that, for each $\ell \in \{n, \ldots , 1\}$, the following inductive invariants hold with respect to that loop.
\begin{enumerate}
\item\label{tau_closed} $\tau^{(\ell - 1)} \in \check{C}^{\ell - 1} \left( \mathscr{U} \cap U, \mathcal{K}_\ell \right)$ is $\partial$-closed.
\item\label{delta_closed} $\varsigma^{(\ell - 1)} + \xi^{(\ell - 1)} - \tau^{(\ell - 1)} \in \check{C}^{\ell - 1} \left( \mathscr{U} \cap U, \mathcal{K}_\ell \right)$ is $\delta$-closed.
\end{enumerate}
In particular, the two conditions above immediately yield the sum decomposition
\begin{equation*}\varsigma^{(\ell - 1)} + \xi^{(\ell - 1)} = \underbrace{\left( \varsigma^{(\ell - 1)} + \xi^{(\ell - 1)} - \tau^{(\ell - 1)} \right)}_{\text{$\delta$-closed}} + \underbrace{\left( \tau^{(\ell - 1)} \right)}_{\text{$\partial$-closed}},\end{equation*}
valid for each $\ell \in \{n, \ldots , 1\}$.

In the base case $\ell = n$, $\tau^{(n - 1)} = 0$ by definition; we conclude that $\delta \left( \varsigma^{(n - 1)} + \xi^{(n - 1)} - \tau^{(n - 1)}\right) = \delta \varsigma^{(n - 1)} + \psi^{(n)} - \delta \varsigma^{(n - 1)} = \psi^{(n)} = 0$, where in the first equality we use the defining property of $\xi^{(n - 1)}$. 

We now let $\ell \in \{n - 1, \ldots , 1\}$. 
By the inductive case of \ref{delta_closed}, $\varsigma^{(\ell)} + \xi^{(\ell)} - \tau^{(\ell)}$ is $\mathcal{K}_{\ell + 1}$-valued, so that $\rho^{(\ell - 1)}$ too is. This shows that $\tau^{(\ell - 1)}$, which in any case is of course $\partial$-closed, takes values in $\mathcal{K}_\ell$. This proves \ref{tau_closed}. 
We note that $\varsigma^{(\ell - 1)}$ takes values in $\mathcal{K}_\ell$, essentially by Construction \ref{construction}; moreover, $\xi^{(\ell - 1)}$ is holomorphic. This proves that $\varsigma^{(\ell - 1)} + \xi^{(\ell - 1)} - \tau^{(\ell - 1)}$ takes values in $\mathcal{K}_\ell$. To show that cochain's closedness, we note that $\delta \tau^{(\ell - 1)} = (-1)^{\ell - 1} \cdot \partial \delta \rho^{(\ell - 1)} = (-1)^{\ell - 1} \cdot \partial \left( \varsigma^{(\ell)} + \xi^{(\ell)} - \tau^{(\ell)} \right) = -(-1)^{\ell} \cdot \partial \left( \varsigma^{(\ell)} + \xi^{(\ell)} \right) = \psi^{(\ell)} = \delta \xi^{(\ell - 1)} + \delta \varsigma^{(\ell - 1)}$, where the last equality is the defining property of $\xi^{(\ell - 1)}$. In the middle equality, we use the inductive case of \ref{tau_closed}.

We're now ready to prove the lemma. Carrying through the induction, we conclude that $\varsigma^{(0)} + \xi^{(0)} - \tau^{(0)} \in \check{C}^0 \left( \mathscr{U} \cap U, \Omega^{n - 1}_{\mathbb{P}^n}(qX) \right)$ is Čech-closed, and so represents a well-defined section $\beta \in \Gamma\left( U, \Omega^{n - 1}_{\mathbb{P}^n}(qX) \right)$. Finally,
\begin{align*}
\left. \omega_A \right|_U - \partial\beta &= \omega^{(0)} - \partial\left( \varsigma^{(0)} + \xi^{(0)} - \tau^{(0)} \right) \tag{by definition of $\beta$.} \\
&= \left( \omega^{(0)} - \partial \vartheta^{(0)} \right) - \partial \left( \varsigma^{(0)} - \vartheta^{(0)} + \xi^{(0)} - \tau^{(0)} \right) \tag{add and subtract $\partial \vartheta^{(0)}$.} \\
&= s^{(0)} - \partial\left( \tilde{s}^{(0)} + \xi^{(0)} - \tau^{(0)} \right) \tag{by the definitions of $s^{(0)}$ and $\varsigma^{(0)}$.} \\
&= \underbrace{\left( s^{(0)} - \partial \tilde{s}^{(0)} \right)}_{\text{holomorphic}} - \underbrace{\left( \partial\xi^{(0)} \right)}_{\mathclap{\text{holomorphic}}}. \tag{using the $\partial$-closedness of $\tau^{(0)}$.}
\end{align*}
The left-hand summand above is holomorphic by our hypothesis on $\tilde{s}$; the right-hand summand is holomorphic by construction. This proves that $\left. \omega_A \right|_U - \partial\beta$ is holomorphic, and so completes the proof of the lemma.
\end{proof}
The proof of Lemma \ref{necessity_correctness} completes the proof of the theorem.
\end{proof}

In the figure below, we depict the data layout used in this section (especially in Lemmas \ref{correctness_primitive} and \ref{necessity_correctness}).

\newpage 

\insertfullpagefiguredummy

\subsection{Algebraic Differential Forms} \label{algebraic}

In this subsection, we discuss algebraic (i.e., rational) differential forms. First, we rederive a formula stated by Griffiths (see Lemma \ref{griffiths} below). After that, using the Grothendieck comparison theorem, we prove that the differential form $\beta$ of Theorem \ref{main} may be freely assumed algebraic (see Theorem \ref{grothendieck}). 
We follow the notation of Lewis \cite[Lec.~9]{Lewis:1999aa}. We work in $\mathbb{C}^{n + 1}$, with volume form $\operatorname{vol} \coloneqq dZ_0 \wedge \cdots \wedge dZ_n$. We write $\nu \coloneqq \sum_{i = 0}^n Z_i \cdot \frac{\partial}{\partial Z_i}$ for the \textit{Euler} vector field on $\mathbb{C}^{n + 1}$ and $\Omega \coloneqq \left< \nu, \operatorname{vol} \right> = \sum_{i = 0}^n (-1)^i Z_i \cdot dZ_0 \wedge \cdots \wedge \widehat{dZ_i} \wedge \cdots \wedge dZ_n$. 

By \cite[9.17.~Cor.]{Lewis:1999aa}, each rational $n$-form on $\mathbb{P}^n$ admits an expression of the form $\frac{A \cdot \Omega}{B}$, where $A$ and $B$ are homogeneous and $\deg B = n + 1 + \deg A$. Moreover, for $V \coloneqq \left( \frac{A_0}{B}, \ldots , \frac{A_n}{B} \right)$ an arbitrary tuple of homogeneous rational functions in $n + 1$ variables, each of degree $-n$, 
the expression
\begin{equation}\label{eta}\eta \coloneqq \frac{1}{B} \cdot \sum_{i < j} \left( Z_i \cdot A_j - A_i \cdot Z_j \right) \cdot \Omega_{ij},\end{equation}
where $\Omega_{ij} \coloneqq \left< \frac{\partial}{\partial Z_i}, \left< \frac{\partial}{\partial Z_j}, \operatorname{vol} \right> \right>$, represents a well-defined $n - 1$-form on $\mathbb{P}^n$, and conversely every rational $n - 1$-form on $\mathbb{P}^n$ arises in this way.

We now fix a smooth hypersurface $X \subset \mathbb{P}^n$ of degree $d$ and a pair $p + q = n - 1$. If the pole order of $\eta$ above along $X$ is at most $q$, say, then, up to adding to $V$ some rational multiple of the Euler field (which has no effect on $\eta$), we can further write $B = F^q \cdot D$, for some denominator $D \notin (F)$, as well as $V = \left( \frac{v_0}{F^q}, \ldots , \frac{v_n}{F^q} \right)$, where, for each $i \in \{0, \ldots , n\}$, $v_i \coloneqq \frac{A_i}{D}$ is \textit{itself} a rational function, of total degree $d \cdot q - n$. We further define $v \coloneqq \left( v_0, \ldots , v_n \right)$, so that $V = \frac{v}{F^q}$. 
We identify $v = (v_0, \ldots , v_n)$ with $v \coloneqq \sum_{i = 0}^n v_i \cdot \frac{\partial}{\partial Z_i}$, and also write $\operatorname{div}(v) \coloneqq \sum_{i = 0}^n \frac{\partial v_i}{\partial Z_i}$ for $v$'s \textit{divergence}.


We note first that, by manipulating the expression \eqref{eta}, we may reëxpress $F^q \cdot \eta = \sum_{i < j} \left( Z_i \cdot v_j - v_i \cdot Z_j \right) \cdot \left< \frac{\partial}{\partial Z_i}, \left< \frac{\partial}{\partial Z_j}, \operatorname{vol} \right> \right> = \left< \nu, \sum_{i = 0}^n v_i \cdot \left< \frac{\partial}{\partial Z_i}, \operatorname{vol} \right> \right> = -\sum_{i = 0}^n v_i \cdot \left< \frac{\partial}{\partial Z_i}, \Omega \right> = -\left< v, \Omega \right>$. The first equality is essentially \cite[9.19.~Ex]{Lewis:1999aa}; to obtain the second, we interchange the first expression's contractions by $\nu$ and $\frac{\partial}{\partial Z_i}$ (a similar expression appears within the proof of \cite[Lem.~6.11]{Voisin:2003aa}). Moreover, we have:
\begin{lemma}[{Griffiths \cite[(4.5)]{Griffiths:1969aa}}] \label{griffiths}
$\partial \eta = \frac{\Omega}{F^{q + 1}} \cdot \left( q \cdot dF(v) - F \cdot \operatorname{div}(v) \right)$.
\end{lemma}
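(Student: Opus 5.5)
We start from the identity established just before the lemma, namely $\eta = -\left< V, \Omega \right>$ with $V = \frac{v}{F^q}$. Everything is computed on $\mathbb{C}^{n + 1}$ and then descended, since $\eta$ is already known to be a well-defined form on $\mathbb{P}^n$. The approach is to compute $\partial \left< V, \Omega \right>$ with Cartan's formula. To do so, we write $\Omega = \left< \nu, \operatorname{vol} \right>$ and exploit that contractions anticommute, so that $\left< V, \Omega \right> = -\left< \nu, \left< V, \operatorname{vol} \right> \right>$.

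The first step uses the Lie derivative $L_\nu = \partial \left< \nu, \cdot \right> + \left< \nu, \partial \cdot \right>$. This gives
\begin{equation*}\partial \left< \nu, \left< V, \operatorname{vol} \right> \right> = L_\nu \left< V, \operatorname{vol} \right> - \left< \nu, \partial \left< V, \operatorname{vol} \right> \right>.\end{equation*}
For any vector field $W$ we have the standard identity $\partial \left< W, \operatorname{vol} \right> = \operatorname{div}(W) \cdot \operatorname{vol}$. Hence the second term is $\operatorname{div}(V) \cdot \Omega$.

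The Lie derivative term uses homogeneity. Each component $V_i$ has degree $-n$, so $\left< V, \operatorname{vol} \right> = \sum_i V_i \cdot \left< \frac{\partial}{\partial Z_i}, \operatorname{vol} \right>$ has coefficients of degree $-n$ multiplying monomial $n$-forms of weight $n$. Thus it is $\nu$-homogeneous of total weight $0$, and $L_\nu$ kills it. Concretely, $L_\nu$ multiplies a form of the shape $g \, dZ_{i_1} \wedge \cdots$ by $\deg g$ plus the number of differentials. Combining the signs, we obtain $\partial \eta = \partial \left< \nu, \left< V, \operatorname{vol} \right> \right> = -\operatorname{div}(V) \cdot \Omega$.

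It remains to expand $\operatorname{div}(V)$ with $V = v / F^q$. By the product rule,
\begin{equation*}\operatorname{div}\left( \frac{v}{F^q} \right) = \frac{\operatorname{div}(v)}{F^q} - \frac{q}{F^{q + 1}} \sum_i v_i \frac{\partial F}{\partial Z_i} = \frac{F \cdot \operatorname{div}(v) - q \cdot dF(v)}{F^{q + 1}}.\end{equation*}
Negating this yields exactly $\frac{\Omega}{F^{q + 1}} \left( q \cdot dF(v) - F \cdot \operatorname{div}(v) \right)$.

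The main obstacle is sign bookkeeping. Two places need care: the anticommutation of contractions in the identity $\eta = -\left< v, \Omega \right>/F^q$, and the sign in Cartan's formula. I would double-check both by testing a simple case, say $n = 1$ with explicit $v$.
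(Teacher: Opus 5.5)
Your proof is correct, and it takes a different route from the paper's.

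The paper keeps $v$ and $F^{-q}$ separate. It applies the Leibniz rule to $-\langle v,\Omega\rangle/F^q$. It rewrites $dF\wedge\langle v,\Omega\rangle = dF(v)\cdot\Omega - d\cdot F\cdot\langle v,\operatorname{vol}\rangle$ using Cartan's formula and Euler's relation $dF(\nu) = d\cdot F$. It then computes $\partial\langle v,\Omega\rangle = \operatorname{div}(v)\cdot\Omega - q\cdot d\cdot\langle v,\operatorname{vol}\rangle$ through $\mathscr{L}_v$, the bracket $[v,\nu] = (n+1-d\cdot q)\cdot v$ and $\operatorname{div}(\nu) = n+1$. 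The two terms $\pm q\cdot d\cdot\langle v,\operatorname{vol}\rangle/F^q$ then cancel.

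You instead absorb $F^{-q}$ into $V = v/F^q$ at the outset. Then $V$ has degree exactly $-n$, and $\langle V,\operatorname{vol}\rangle$ has $\nu$-weight zero. Cartan's formula for $\mathscr{L}_\nu$ (rather than $\mathscr{L}_v$) gives $\partial\eta = -\operatorname{div}(V)\cdot\Omega$ outright. All the dependence on $F$ is then handled by the product rule for divergence.

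What each route buys:
\begin{itemize}
\item Your route is shorter and has no cancelling terms. It isolates a clean general identity, $\partial\langle\nu,\langle V,\operatorname{vol}\rangle\rangle = -\operatorname{div}(V)\cdot\Omega$ for any rational field $V$ of degree $-n$. Its weight-zero hypothesis is the same one that makes $\eta$ descend to $\mathbb{P}^n$.
\item The paper's route shows exactly where Euler's relation for $F$ and the homogeneity of $v$ itself enter. That bookkeeping is closer to Griffiths's original computation.
\end{itemize}

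Both sign points you flagged check out. The anticommutation of contractions gives $\eta = \langle\nu,\langle V,\operatorname{vol}\rangle\rangle$. In the test case $n = 1$ one gets $\eta = Z_1V_0 - Z_0V_1$, and Euler's formula for $V_0, V_1$ (each of degree $-1$) yields $\partial(Z_1V_0 - Z_0V_1) = -\operatorname{div}(V)\cdot(Z_0\,dZ_1 - Z_1\,dZ_0)$, as your formula predicts.
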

\begin{proof}
Indeed, we calculate:
\begin{align*}
\partial \eta &= \partial \left( \frac{-\left< v, \Omega \right>}{F^q} \right) \tag{just argued.} \\
&= \frac{q \cdot dF \wedge \left< v, \Omega \right>}{F^{q + 1}} - \frac{ \partial \left< v, \Omega \right>}{F^q} \tag{by Leibniz's rule.} \\
&= \frac{q \cdot dF(v) \cdot \Omega}{F^{q + 1}} - \frac{q \cdot d \cdot \left< v, \operatorname{vol} \right>}{F^q} - \frac{\partial \left< v, \Omega \right>}{F^q} \tag{by Cartan's formula; see below.} \\
&= \frac{q \cdot dF(v) \cdot \Omega}{F^{q + 1}} - \frac{q \cdot d \cdot \left< v, \operatorname{vol} \right>}{F^q} - \frac{\operatorname{div}(v) \cdot \Omega}{F^q} + \frac{q \cdot d \cdot \left< v, \operatorname{vol} \right>}{F^q} \tag{by Cartan's formula again; see below.} \\
&= \frac{\Omega}{F^{q + 1}} \cdot \left( q \cdot dF(v) - F \cdot \operatorname{div}(v) \right). \tag{cancel terms.}
\end{align*}
To achieve the third equality, we use Cartan's formula twice. To wit, $dF \wedge \left< v, \Omega \right> = dF(v) \cdot \Omega - \left< v, dF \wedge \Omega \right> = dF(v) \cdot \Omega - \left< v, dF(\nu) \cdot \operatorname{vol} - \left< \nu, dF \wedge \operatorname{vol} \right> \right> = dF(v) \cdot \Omega - d \cdot F \cdot \left< v, \operatorname{vol} \right>$; in the last step, we use Euler's formula to the effect that $dF(\nu) = d \cdot F$. To achieve the second-to-last equality, we use two further variants of Cartan's formula, now pertaining to the Lie derivative $\mathscr{L}_v$ along $v$. Indeed,
\begin{align*}
\partial \left< v, \Omega \right> &= \mathscr{L}_v \Omega - \left< v, \partial \Omega \right> \tag{using Cartan's formula $\mathscr{L}_v \Omega = \partial \left< v, \Omega \right> + \left< v, \partial \Omega \right>$.} \\
&= \mathscr{L}_v \Omega - \left< v, (n + 1) \cdot \operatorname{vol} \right> \tag{using $\operatorname{div}(\nu) \cdot \operatorname{vol} = \mathscr{L}_\nu \operatorname{vol} = \partial \left< \nu, \operatorname{vol} \right> + \left< \nu, \partial \operatorname{vol} \right> = \partial \left< \nu, \operatorname{vol} \right>$.} \\
&= \left< \nu, \mathscr{L}_v \operatorname{vol} \right> + \left< [v, \nu], \operatorname{vol} \right> - \left< v, (n + 1) \cdot \operatorname{vol} \right> \tag{using $\mathscr{L}_v\left< \nu, \operatorname{vol} \right> = \left< \nu, \mathscr{L}_v \operatorname{vol} \right> + \left< [v, \nu], \operatorname{vol} \right>$.} \\
&= \left< \nu, \mathscr{L}_v \operatorname{vol} \right> + \left(n + 1 - q \cdot d \right) \cdot \left< v, \operatorname{vol} \right> - (n + 1) \cdot \left< v, \operatorname{vol} \right>  \tag{using $[v, \nu] = (n + 1 - d \cdot q) \cdot v$.} \\
&= \left< \nu, \operatorname{div}(v) \cdot \operatorname{vol} \right> - q \cdot d \cdot \left< v, \operatorname{vol} \right> \tag{using $\mathscr{L}_v \operatorname{vol} = \operatorname{div}(v) \cdot \operatorname{vol}$.} \\
&= \operatorname{div}(v) \cdot \Omega - q \cdot d \cdot \left< v, \operatorname{vol} \right>;
\end{align*}
this calculation completes the proof of the lemma.
\end{proof}

For $\omega_A = \frac{A \cdot \Omega}{F^{q + 1}}$ and $\eta$ as above, Lemma \ref{griffiths} implies that $\omega_A - \partial \eta = \frac{\Omega}{F^{q + 1}} \cdot \left( A - q \cdot dF(v) + F \cdot \operatorname{div}(v) \right)$.
In particular, $\omega_A - \partial \eta$ is holomorphic if and only if, in the local ring $\mathbb{C}[Z_0, \ldots , Z_n]_{(F)}$,
\begin{equation}\label{thing}F^{q + 1} \mid A - q \cdot dF(v) + F \cdot \operatorname{div}(v).\end{equation}

Below, we 
assume $q \in \{1, \ldots , n - 1\}$ (the case $q = 0$ is trivial). We
further let $A$ be homogeneous of degree $d \cdot (q + 1) - (n + 1)$, set $\omega_A$ again as above, 
and let $U \subset \mathbb{P}^n$ be a Zariski-open set for which $X \cap U \neq \varnothing$. 
\begin{theorem} \label{grothendieck}
Suppose that $\beta \in \Gamma\left( U, \Omega^{n - 1}_{\mathbb{P}^n}(qX) \right)$ is such that $\left. \omega_A \right|_U - \partial \beta \in \Gamma\left( U,  \Omega^n_{\mathbb{P}^n} \right)$. Then up to possibly shrinking $U$, there exists an algebraic differential form $\eta \in \Gamma\left( U, \Omega^{n - 1}_{\mathbb{P}^n}(qX) \right)$ such that $\left. \omega_A \right|_U - \partial \eta \in \Gamma\left( U,  \Omega^n_{\mathbb{P}^n} \right)$.
\end{theorem}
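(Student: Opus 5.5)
Here $\beta$ is only a holomorphic section, on $U$ viewed analytically, of the locally free sheaf $\Omega^{n-1}_{\mathbb{P}^n}(qX)$. The plan is to replace it by an algebraic one using the Grothendieck comparison theorem, and then to use the finite pole order, via the algebraic pole-order reduction of Lemma \ref{griffiths}, to control the pole order of the algebraic replacement along $X$.

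First, I shrink $U$ to a principal affine open $U = \{G \neq 0\}$ meeting $X$, and write $W \coloneqq U \setminus X$, which is again affine. On $W$, the form $\omega_A - \partial\beta$ is holomorphic and $\partial$-closed, being a top-degree form. Moreover, $\omega_A|_W$ is algebraic, and it equals $\partial\beta$ plus a holomorphic $n$-form on $U$.

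Second, I pass to de Rham classes and apply Grothendieck's comparison theorem on $W$. This gives $H^n_{\mathrm{dR}}(W) \cong H^n(W^{\mathrm{an}},\mathbb{C})$, and likewise on $U$. The class of $\omega_A$ on $W$ equals the class of $\mu \coloneqq \omega_A - \partial\beta$, which extends holomorphically across $X \cap U$. Since $U$ is affine, $\mu$ defines a class in $H^n(U^{\mathrm{an}},\mathbb{C})$, and by comparison on $U$ this class is represented by an algebraic closed form $\mu' \in \Gamma(U,\Omega^n_{\mathbb{P}^n})$. Hence $\mu - \mu' = \partial\kappa$ for some analytic $\kappa$ on $U$, by Stein-ness and the holomorphic Poincaré lemma. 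Then $\omega_A - \mu'$ is an algebraic form on $W$ whose class in $H^n(W^{\mathrm{an}},\mathbb{C})$ vanishes, since it equals $\partial(\beta+\kappa)$. By comparison on $W$, we get $\omega_A - \mu' = \partial\eta_0$ for some algebraic $\eta_0 \in \Gamma(W,\Omega^{n-1})$. Thus $\eta_0$ is a rational $(n-1)$-form, regular on $U$ away from $X$, with poles of some unknown finite order $m$ along $X$.

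Third, I reduce the pole order. If $m \le q$ we are done. Otherwise, I write $\eta_0$ in the form \eqref{eta} with $B = F^{m}D$, where $D \notin (F)$ is supported outside $U$, so that $\eta_0 = -\langle v,\Omega\rangle/F^{m}$. Comparing with $\omega_A - \mu'$, whose pole order along $X$ is at most $q+1 \le m$, Lemma \ref{griffiths} (with $q$ replaced by $m$) gives the congruence $m\cdot dF(v) \equiv 0 \pmod F$ in the local ring. Since $X$ is smooth, the partials of $F$ form a regular sequence, and in the local ring the Koszul relations show that $v \equiv gF\,\cdot + (\text{Koszul term}) + c\,\nu$ modulo $F$. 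Concretely, we get $v = \langle \partial F$-syzygy$\rangle + F w + c\nu$, where the syzygy is of the form $\sum_{i<j} a_{ij}(F_j e_i - F_i e_j)$. The $\nu$-part contributes nothing to $\eta$. The Koszul part contributes a term $\langle \cdot,\Omega\rangle/F^m$ which is, modulo forms with pole order $m-1$, exact: it equals $\partial$ of an explicit form with pole order $m-1$, by the same Cartan-formula computation as in Lemma \ref{griffiths}. Subtracting that exact correction from $\eta_0$ lowers its pole order to $m-1$ without changing $\partial\eta_0$. I then iterate until the pole order is at most $q$. At each stage the new denominators involve only $D$ and $F$, possibly after a further shrinking of $U$ to invert the factors of $D$ that appear, which is where the phrase \emph{up to possibly shrinking $U$} is used.

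Finally, I must absorb $\mu'$. The resulting $\eta$ satisfies $\omega_A - \partial\eta = \mu'$, which is algebraic and regular on $U$, as required.

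\textbf{Main obstacle.} The delicate step is the third. One must check that the exact correction has algebraic coefficients regular on $U$ off $X$. One must also check that the syzygy decomposition holds in the appropriate semi-local ring (localized at $(F)$ and with $D$ inverted) rather than merely at the generic point of $X$. If the decomposition only holds after inverting further polynomials, I would shrink $U$ accordingly.
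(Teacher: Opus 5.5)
Your argument is correct, and it takes a different route from the paper's.

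\textbf{The paper's route.} It shrinks $U$ into $D(\partial F/\partial Z_j)$. It then lowers the pole order of $\omega_A$ itself to one, algebraically, using the one-coordinate fields $\frac{a_{h+1}}{h\,\partial F/\partial Z_j}\frac{\partial}{\partial Z_j}$ and Lemma \ref{griffiths}. Next it lowers the pole order of $\beta-\alpha_1$ analytically, which shows that $\operatorname{Res}_X(\omega_A-\partial\alpha_1)$ is an algebraic exact form on $X\cap U$. It applies the comparison theorem once, on $X\cap U$ in degree $n-1$, and lifts the algebraic primitive $\kappa$ to a logarithmic form.

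\textbf{Your route.} You apply comparison in degree $n$ on $U$ and on $U\setminus X$. You then reduce the pole order of the resulting algebraic primitive $\eta_0$, which amounts to the algebraic exactness of Griffiths's pole-order sequence on the affine $U$.

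The step you flag as delicate needs no further shrinking:
\begin{itemize}
\item Since $F$ is irreducible and $D\notin(F)$, your congruence gives $\sum_i A_i\frac{\partial F}{\partial Z_i}=F\cdot H$ already in $\mathbb{C}[Z_0,\ldots,Z_n]$.
\item By Euler's identity, $(A_i-\frac{H}{d}Z_i)_i$ is a syzygy of the partials. These form a regular sequence because $X$ is smooth, so the syzygy equals $\sum_{i<j}a_{ij}\bigl(\frac{\partial F}{\partial Z_j}e_i-\frac{\partial F}{\partial Z_i}e_j\bigr)$ with polynomial $a_{ij}$.
\item The form $\lambda=\frac{\pm1}{(m-1)DF^{m-1}}\sum_{i<j}a_{ij}\langle\frac{\partial}{\partial Z_i},\langle\frac{\partial}{\partial Z_j},\Omega\rangle\rangle$ then satisfies $\partial\lambda\equiv\eta_0$ modulo pole order $m-1$. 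Its denominators are only $D$ and $F$, and $m\geq q+1\geq 2$ makes the division by $m-1$ legitimate.
\end{itemize}
Alternatively, inverting one partial, as the paper does, makes the syzygy decomposition trivial.

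\textbf{What each route buys.} Your route never uses the pole order of $\beta$, or even its finiteness. It works for any holomorphic $\beta$ on $U\setminus X$ such that $\omega_A|_U-\partial\beta$ extends holomorphically across $X\cap U$, including $\beta$ with essential singularities along $X$. This removes the dependence that the paper's overview calls ``apparently essential.'' The paper's route, in exchange, needs only the elementary one-coordinate reduction of $\omega_A$ (no Koszul complex) and a single comparison on the lower-dimensional $X\cap U$. Its cost is the residue formalism and a non-algebraic intermediate lift.
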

\begin{proof}
We pick some $j \in \{0, \ldots , n \}$ for which $F_j \coloneqq \frac{\partial F}{\partial Z_j}$ is not identically zero. Up to replacing $U$ with $U \cap D(F_j)$, and possibly further shrinking it, we may assume that $U \subset D(F_j)$, as well as that $U$ is affine. 

\begin{construction}[Construction of Algebraic Primitive] \label{grothendieck_construction}
\textit{Input.} A holomorphic differential form $\beta \in \Gamma\left( U, \Omega^{n - 1}_{\mathbb{P}^n}(qX) \right)$ such that $\left. \omega_A \right|_U - \partial\beta \in \Gamma\left( U,  \Omega^n_{\mathbb{P}^n} \right)$. \\
\textit{Output.} An algebraic differential form $\eta \in \Gamma\left( U, \Omega^{n - 1}_{\mathbb{P}^n}(qX) \right)$ such that $\left. \omega_A \right|_U - \partial\eta \in \Gamma\left( U,  \Omega^n_{\mathbb{P}^n} \right)$.
\begin{itemize}
\item Initialize $\alpha_{q + 1} \coloneqq 0$ in $\Gamma\left( U, \Omega^{n - 1}_{\mathbb{P}^n}(qX) \right)$.
\item \textbf{for} $h \in \{q, \ldots , 1\}$ \textbf{do}:
\begin{itemize}
\item By induction on $h$, $\left. \omega_A \right|_U - \partial \alpha_{h + 1}$ is algebraic, with pole order at most $h + 1$, and so takes the shape $\left. \omega_A \right|_U - \partial \alpha_{h + 1} = \frac{a_{h + 1} \cdot \Omega}{F^{h + 1}}$, for $a_{h + 1}$ homogeneous and rational of degree $d \cdot (h + 1) - (n + 1)$. 
\item Define $v_h \coloneqq \left( 0, \ldots , \frac{a_{h + 1}}{h \cdot F_j}, \ldots , 0 \right)$ and set $\gamma_h \coloneqq \frac{-\left< v_h, \Omega \right>}{F^h}$. By Lemma \ref{griffiths}, this choice yields
\begin{equation*}\partial \gamma_h = \partial \left( \frac{-\left< v_h, \Omega \right>}{F^h} \right) = \frac{\Omega}{F^{h + 1}} \cdot \left( h \cdot dF(v_h) - F \cdot \operatorname{div}(v_h)\right) = \frac{a_{h + 1} \cdot \Omega}{F^{h + 1}} - \frac{\operatorname{div}(v_h) \cdot \Omega}{F^h},\end{equation*}
so that the pole order of $\left. \omega_A \right|_U - \partial \alpha_{h + 1} - \partial \gamma_h$ along $X$ is at most $h$. Update $\alpha_h \coloneqq \alpha_{h + 1} + \gamma_h$.
\end{itemize}
\item Inducting, we see that $\alpha_1 \in \Gamma\left( U, \Omega^{n - 1}_{\mathbb{P}^n}(qX) \right)$ is algebraic and satisfies $\left. \omega_A \right|_U - \partial \alpha_1 \in \Gamma\left( U, \Omega^n_{\mathbb{P}^n}(X) \right)$. In particular, $\partial \beta - \partial \alpha_1 = \left(\omega_A|_U - \partial \alpha_1 \right) - \left( \omega_A|_U - \partial \beta\right)$ has simple (equivalently, logarithmic) poles.
\item Initialize $\mu_q \coloneqq 0$ in $\Gamma\left( U, \Omega^{n - 2}_{\mathbb{P}^n}((q - 1)X) \right)$.
\item \textbf{for} $h \in \{q - 1, \ldots , 1\}$ \textbf{do}:
\begin{itemize}
\item By induction on $h$, $\beta - \alpha_1 - \partial \mu_{h + 1} \in \Gamma\left( U, \Omega^{n - 1}_{\mathbb{P}^n}((h + 1)X) \right)$; moreover, its differential $\partial \beta - \partial \alpha_1$ has logarithmic poles (so \textit{a fortiori} order-$h + 1$ poles), by the above. Thus we can lift it under
\begin{equation*}\Omega_{\mathbb{P}^n}^{n - 2}(hX) \xlongrightarrow{\partial} \Omega_{\mathbb{P}^n}^{n - 1}((h + 1)X) \mathbin{/} \Omega_{\mathbb{P}^n}^{n - 1}(hX) \xlongrightarrow{\partial} \Omega_{\mathbb{P}^n}^n((h + 2)X) \mathbin{/} \Omega_{\mathbb{P}^n}^n((h + 1)X)\end{equation*}
to an element $\chi_h \in \Gamma\left( U, \Omega^{n - 2}_{\mathbb{P}^n}(hX) \right)$, say---\textit{not} in general algebraic---such that $\beta - \alpha_1 - \partial \mu_{h + 1} - \partial \chi_h$ has order-$h$ poles at most. Update $\mu_h \coloneqq \mu_{h + 1} + \chi_h$.
\end{itemize}
\item Now $\operatorname{Res}_X \left( \left. \omega_A \right|_U - \partial \alpha_1 \right) = 
\operatorname{Res}_X\left( \partial \beta - \partial \alpha_1 \right) = \partial \operatorname{Res}_X \left( \beta - \alpha_1 - \partial \mu_1 \right)$;
to obtain the first equality, we use our hypothesis on $\beta$, which implies that $\operatorname{Res}_X(\omega_A|_U - \partial \beta) = 0$.
We conclude that $\operatorname{Res}_X \left( \left. \omega_A \right|_U - \partial \alpha_1 \right) \in \Gamma\left( X \cap U, \Omega^{n - 1}_X \right)$
is algebraic and exact. 
By the Grothendieck comparison theorem (see André, Baldassarri, and Cailotto \cite[Cor.~31.1.2]{Andre:2020aa}),
there exists an algebraic primitive $\kappa \in \Gamma\left(X \cap U, \Omega^{n - 2}_X \right)$ such that $\partial \kappa = \operatorname{Res}_X \left( \left. \omega_A \right|_U - \partial \alpha_1 \right)$. By the surjectivity of the residue,
we may find some algebraic $\gamma_0 \in \Gamma\left( U, \Omega^{n - 1}_{\mathbb{P}^n}(\log X) \right)$ such that $\operatorname{Res}_X(\gamma_0) = \kappa$. Setting $\alpha_0 \coloneqq \alpha_1 + \gamma_0$, we obtain $\operatorname{Res}_X \left( \left. \omega_A \right|_U - \partial \alpha_0 \right) = 0$,
so that $\left. \omega_A \right|_U - \partial \alpha_0$ is holomorphic, as desired.
\item Output $\eta \coloneqq \alpha_0$.
\end{itemize}
\end{construction}
The correctness of Construction \ref{grothendieck_construction} is explained inline; that construction establishes the theorem.
\end{proof}

\begin{example} \label{long_example}
For illustration, we discuss the specialization of Construction \ref{construction} to the case $n = 5$, $d = 6$, and $q \in \{1, 2\}$, that of interest to us in Section \ref{solve} below. We fix $\mathscr{U} = \left\{ U_j \right\}_{j \in J} \coloneqq \left\{ D\left( \frac{\partial F}{\partial Z_j}\right) \right\}_{j = 0}^5$ once and for all; we note that this cover's components are affine. We begin with the case $q = 1$. In that case, Construction \ref{construction}, on input $A \in R^6$, begins with the Griffiths form $\omega_A \coloneqq \frac{A \cdot \Omega}{F^2}$ in $\Gamma\left( \mathbb{P}^5, \Omega_{\mathbb{P}^5}^5(2X) \right)$, and constructs from it the Čech 0-cycle $\omega^{(0)} \coloneqq \left\{ \left. \omega_A\right|_{U_{i}} \right\}_{i}$. The total cochain $\left( \omega^{(0)}, 0, \ldots , 0 \right)$ of course doesn't take values in the logarithmic complex $\check{C}(\mathscr{U}, \Omega^\bullet_{\mathbb{P}^5}(\log X))$ ($\omega_A$'s poles are of order 2 along $X$). To correct that cochain, Construction \ref{construction} runs its inner loop exactly once. That inner loop's exact sequence specializes in this case---i.e., in the case $\ell = 1$ and $h = 1$---to
\begin{equation*}\Omega^4_{\mathbb{P}^5}(X) \to \Omega^5_{\mathbb{P}^5}(2X) \mathbin{/} \Omega^5_{\mathbb{P}^5}(X) \to 0;\end{equation*}
the inner loop's body prescribes that a 0-cochain $\gamma_1^{(0)} \in \check{C}^0\left( \mathscr{U}, \Omega_{\mathbb{P}^5}^4(X) \right)$ be chosen in such a way that $\omega^{(0)} - \partial\gamma_1^{(0)}$ has but simple poles. (That we can necessarily do this without refining the cover again owes to the affineness of that cover's components and Cartan's Theorem B.)
That loop then sets $\alpha_1^{(0)} \coloneqq \gamma_1^{(0)}$ and $\vartheta^{(0)} \coloneqq \alpha_1^{(0)}$. Finally, the algorithm sets $\omega^{(1)} \coloneqq -\delta \vartheta^{(0)}$, outputs $\left( \omega^{(0)} - \partial \vartheta^{(0)}, \omega^{(1)}, 0, \ldots , 0 \right)$ and terminates. That total cocycle indeed takes values in the logarithmic complex. Indeed, first of all, $\omega^{(0)} - \partial \vartheta^{(0)}$'s poles are simple by the construction of $\vartheta^{(0)}$, and ``simple poles'' and ``logarithmic poles'' along $X$ coincide for 5-forms on $\mathbb{P}^5$. As for $\omega^{(1)}$, as the proof of Lemma \ref{inductive_construction} points out, $\omega^{(1)} = -\delta \vartheta^{(0)}$ of course has simple poles, and in fact so too does $\partial \omega^{(1)} = -\partial \delta \vartheta^{(0)} = \delta \left( \omega^{(0)} - \partial \vartheta^{(0)} \right)$, since, as we just showed, $\omega^{(0)} - \partial \vartheta^{(0)}$ has simple poles (to achieve the last equality, we use the Čech-closedness of $\omega^{(0)}$).

In fact, it's possible to write down the lift $\vartheta^{(0)}$ above---or rather, a suitable such lift---concretely. We fix a component $U_j$, for $j \in \{0, \ldots , 5\}$, of our cover. Using the notation introduced at the beginning of this subsection, we set $D \coloneqq \frac{\partial F}{\partial Z_j}$, so that $B = F \cdot D$, and set $(A_0, \ldots , A_5) \coloneqq \left(0, \ldots , A, \ldots , 0 \right)$, where the element $A$ resides in the $j$\textsuperscript{th} component; we write $v \coloneqq \left( \frac{A_0}{D}, \ldots , \frac{A_5}{D} \right)$ as usual. 
We note that the total degree of $v_j$ is $6 - 5 = d \cdot q - n$, as required.
By the discussion above, the datum $v$ yields a well-defined element---say, $\vartheta^{(0)}_j$---of $\Gamma \left(U_j, \Omega^4_{\mathbb{P}^5}(X) \right)$; concretely, it takes the form $\vartheta^{(0)}_j = -\sum_{i = 0}^5 \frac{v_i}{F} \cdot \left< \frac{\partial}{\partial Z_i}, \Omega \right> = \frac{-A}{F \cdot \frac{\partial F}{\partial Z_j}} \cdot \left< \frac{\partial}{\partial Z_j}, \Omega \right>$. Moreover, since $dF(v) = \sum_{i = 0}^5 v_i \cdot \frac{\partial F}{\partial Z_i} = A$, Lemma \ref{griffiths} immediately implies that
\begin{equation*}\partial \vartheta^{(0)}_j = \frac{\Omega}{F^2} \cdot \left( dF(v) - F \cdot \operatorname{div}(v) \right) = \frac{\Omega}{F^2} \cdot \left( A - F \cdot \operatorname{div}(v) \right) = \frac{A \cdot \Omega}{F^2} - \frac{\operatorname{div}(v) \cdot \Omega}{F}\end{equation*}
on $U_j$.
We see immediately that $\left. \omega_A \right|_{U_j} - \partial \vartheta^{(0)}_j$ has simple poles along $X$, which is what we wanted to show.

In the case $q = 2$, Construction \ref{construction}, on input $A \in R^{12}$, begins again with $\omega_A \coloneqq \frac{A \cdot \Omega}{F^3}$ in $\Gamma\left( \mathbb{P}^5, \Omega^5_{\mathbb{P}^5}(3X) \right)$ and its associated Čech 0-cycle $\omega^{(0)}$. In the first iteration $\ell = 2$ of its outer loop, its inner loop runs twice (i.e., for $h \in \{2, 1\}$), constructing along the way both $\gamma_2^{(0)}$ and $\gamma_1^{(0)}$. Using the same reasoning just used, we can again describe these 0-cochains explicitly. Indeed, to construct $\gamma_2^{(0)}$, we may use, on each open $U_j$, the 4-form in $\Gamma \left( U_j, \Omega^4_{\mathbb{P}^5}(2X) \right)$ associated to the data $D \coloneqq \frac{\partial F}{\partial Z_j}$ (so that $B = F^2 \cdot D$) and $(A_0, \ldots , A_5) \coloneqq \left( 0, \ldots , \frac{1}{2} \cdot A, \ldots , 0 \right)$. This choice causes $\omega^{(0)} - \partial\alpha_2^{(0)}$'s component at $U_j$ to equal $\frac{\Omega}{F^2} \cdot \frac{\partial}{\partial Z_j} \frac{A}{2 \cdot \frac{\partial F}{\partial Z_j}}$. To construct $\gamma_1^{(0)}$, we can use on $U_j$ the data $D \coloneqq \frac{\partial F}{\partial Z_j}$ (so that $B = F \cdot D$) and $(A_0, \ldots , A_5) \coloneqq \left( 0, \ldots , \frac{\partial}{\partial Z_j} \frac{A}{2 \cdot \frac{\partial F}{\partial Z_j}}, \ldots , 0 \right)$.

As a direct calculation shows, the resulting element $\omega^{(1)} \coloneqq -\delta \vartheta^{(0)}$ equals, on each overlap $U_i \cap U_j$,
\begin{equation}\label{annoying}\frac{A}{2 \cdot F^2} \cdot \left( \frac{\left< \frac{\partial}{\partial Z_j}, \Omega \right>}{\frac{\partial F}{\partial Z_j}} - \frac{\left< \frac{\partial}{\partial Z_i}, \Omega \right>}{\frac{\partial F}{\partial Z_i}} \right)\end{equation}
plus something whose poles along $X$ are simple. In the second iteration $\ell = 1$ of Construction \ref{construction}'s outer loop, that construction's inner loop runs just once (i.e., for $h = 1$). We can once again explicitly write down a suitable choice for that inner loop's 1-cochain $\gamma^{(1)}_1$. Indeed, over each overlap $U_i \cap U_j$, it's enough to use the 3-form $\frac{-A}{2 \cdot F \cdot \frac{\partial F}{\partial Z_i} \cdot \frac{\partial F}{\partial Z_j}} \cdot \left< \frac{\partial}{\partial Z_i}, \left< \frac{\partial}{\partial Z_j}, \Omega \right> \right>$. A direct, albeit cumbersome, calculation akin to that carried out during the proof of Lemma \ref{griffiths} shows that the differential of this form equals \eqref{annoying}, modulo first-order poles.
\end{example}


\section{Producing Explicit Primitives} \label{solve}

In this section, and for the rest of the paper, we turn our attention to the algebraic partial differential equation \eqref{thing}. We fix $n = 5$ and $d = 6$ for good. We work in coordinates $(Z_0, \ldots , Z_5) = (U_0, U_1, U_2, V_0, V_1, V_2)$ equipped with the \textit{block swap} involution
\begin{equation*}\sigma : (U_0, U_1, U_2, V_0, V_1, V_2) \mapsto (V_0, V_1, V_2, U_0, U_1, U_2).\end{equation*}
As we have explained, Voisin \cite{Voisin:2015aa} has asked about a case of the generalized Hodge conjecture pertaining to smooth sextics $X \subset \mathbb{P}^5$ invariant under the involution
\begin{equation*}\iota : (X_0, X_1, X_2, Y_0, Y_1, Y_2) \mapsto i \cdot (Y_0, Y_1, Y_2, -X_0, -X_1, -X_2).\end{equation*}
Up to the projective linear change of coordinates
\begin{equation}\label{change}p : (X_0, X_1, X_2, Y_0, Y_1, Y_2) \mapsto (X_0, X_1, X_2, i \cdot Y_0, i \cdot Y_1, i \cdot Y_2) \eqqcolon (U_0, U_1, U_2, V_0, V_1, V_2),\end{equation}
the involutions $\iota$ and $\sigma$ identify with each other, so our consideration rather of $\sigma$ is no loss.
In fact, Voisin begins rather with a third involution, the \textit{sign flip} involution
\begin{equation*}\pi : (L_0, L_1, L_2, M_0, M_1, M_2) \to (L_0, L_1, L_2, -M_0, -M_1, -M_2).\end{equation*}
The involutions $\pi$ and $\iota$ too identify, this time under a further change of coordinates $(X_0, X_1, X_2, Y_0, Y_1, Y_2) \to (L_0, L_1, L_2, M_0, M_1, M_2)$.
While $\pi$ is the simplest and most attractive of the three involutions, the Shioda-type sextics $f(X_0, X_1, X_2) - f(Y_0, Y_1, Y_2)$ do not identify---under the change of coordinates just above---with sextics in $(L_0, L_1, L_2, M_0, M_1, M_2)$ of any particularly simple or recognizable shape. On the other hand, under the change of coordinates \eqref{change}, these sextics assume the shape $f(U_0, U_1, U_2) + f(V_0, V_1, V_2)$.
For this reason, we work throughout this section in $(U_0, U_1, U_2, V_0, V_1, V_2)$ coordinates, and with the involution $\sigma$. 

We fix an arbitrary smooth sextic $X \subset \mathbb{P}^5$ defined by a $\sigma$-invariant equation $F(U_0, U_1, U_2, V_0, V_1, V_2)$. As Voisin explains, the induced action $\sigma^* : H^4(X, \mathbb{C}) \to H^4(X, \mathbb{C})$ acts as $-1$ on $H^{4, 0}(X) \subset H^4(X, \mathbb{C})$. Indeed, it follows from Griffiths's theory (see again Lewis \cite[Lec.~9]{Lewis:1999aa}) that the nowhere-vanishing Calabi--Yau $4$-form $\omega \in H^{4, 0}(X)$ arises as $\operatorname{Res}_X \left( \frac{\Omega}{F} \right)$, where, again, $\Omega \coloneqq \left< \nu, \operatorname{vol} \right>$ 
is the contraction of the volume form on $\mathbb{C}^6$ with the Euler vector field. Since $F$ is $\sigma$-invariant by hypothesis and $\operatorname{Res}_X$ is $\sigma$-equivariant, it suffices to ascertain how $\Omega = \left< \nu, \operatorname{vol} \right>$ transforms under $\sigma^*$. Since, under each linear automorphism $\Sigma : \mathbb{C}^6 \to \mathbb{C}^6$, the Euler vector field $\nu = \sum_{i = 0}^5 Z_i \cdot \frac{\partial}{\partial Z_i}$ is preserved, we have $\sigma^* \Omega = \sigma^* \left< \nu, \operatorname{vol} \right> = \left< \nu, \sigma^* \operatorname{vol} \right> = \det\Sigma \cdot \Omega$; moreover, for our particular involution $\sigma$, we further have $\det\Sigma = \det \left(\left[\begin{array}{c|c}
\phantom{I_3} & I_3 \\
\hline
I_3 & \phantom{I_3}
\end{array}\right] \right) = -1$.
This completes our demonstration that $H^4(X, \mathbb{Q})^+ \subset H^4(X, \mathbb{Q})$ is of Hodge coniveau at least 1 (see \cite[7.11.~Def]{Lewis:1999aa} for definitions). More generally, the same reasoning shows that, for each $q \in \{0, \ldots , 4\}$ and each $A \in R^{6 \cdot q}$, the differential form $\omega_A = \frac{A \cdot \Omega}{F^{q + 1}}$ is $\sigma$-invariant if and only if $A$ is $\sigma$-\textit{anti-invariant}.

In any case, the generalized Hodge conjecture predicts that, for each $\gamma \in H^4(X, \mathbb{Q})^+$, there should exist a divisor $Y \subset X$ (depending on $\gamma$ in general) such that $\gamma \in \ker\left( H^4(X, \mathbb{Q}) \to H^4(X \setminus Y, \mathbb{Q}) \right)$ (see e.g.\ \cite[7.8.~Prop-Def.]{Lewis:1999aa}). Equivalently, we may ask instead that, for each $\gamma \in H^4(X, \mathbb{C})^+$, there exist a divisor $Y \subset X$ such that $\gamma \in \ker\left( H^4(X, \mathbb{C}) \to H^4(X \setminus Y, \mathbb{C}) \right)$. Indeed, the equivalence of these two formulations is a standard consequence of the functoriality of the tensor product and the fact that $\sigma$ is defined over $\mathbb{Q}$.
By the Lefschetz theorem for Picard groups (see e.g.\ Lazarsfeld \cite[Ex~3.1.25.]{Lazarsfeld:2004aa}), we may freely consider just divisors $Y \subset X$ that arise as $Y = V \cap X$, for $V \subset \mathbb{P}^5$ a hypersurface. By Hodge symmetry, we may freely restrict our attention to the cases $q \in \{0, 1, 2\}$ (in the vacuous case $q = 0$, there's nothing to show). Finally, the non-primitive invariant class $h^2 \in H^4(X, \mathbb{Q})$ presents no issue.

These remarks, together with the results of our Section \ref{basic} (in particular, that section's Theorem \ref{containment}, Theorem \ref{main}, and Theorem \ref{grothendieck}) imply that the prediction of the generalized Hodge conjecture as regards $H^4(X, \mathbb{Q})^+ \subset H^4(X, \mathbb{Q})$, in case $X \subset \mathbb{P}^5$ is defined by a $\sigma$-invariant sextic $F(U_0, U_1, U_2, V_0, V_1, V_2)$, is \textit{equivalent} to the following question. 
\begin{question} \label{question}
For smooth $X \subset \mathbb{P}^5$ defined by a $\sigma$-invariant sextic $F(U_0, U_1, U_2, V_0, V_1, V_2)$, for each $q \in \{1, 2\}$ and each $\sigma$-anti-invariant $A \in R^{6 \cdot q}$, does there exist a vector field $v = (v_0, \ldots , v_5)$, whose components are degree-$6 \cdot q - 5$ homogeneous rational functions with denominators not divisible by $F$, for which \eqref{thing} holds?
\end{question}

The main purpose of this section is to answer Question \ref{question} for $F(U_0, U_1, U_2, V_0, V_1, V_2)$ of the form $f(U_0, U_1, U_2) + f(V_0, V_1, V_2)$, for $f(U_0, U_1, U_2)$ a plane sextic of Waring rank 3 (see Theorem \ref{last}).


\subsection{The Reduced Algorithm} \label{reduced_subsection}

The following algorithm plays a key internal role in our solution. It solves \eqref{thing} for $F(U_0, U_1, U_2, V_0, V_1, V_2) = U_0^6 + U_1^6 + U_2^6 + V_0^6 + V_1^6 + V_2^6$ the Fermat sextic, $q \in \{1, 2\}$, and $A \in R^{6 \cdot q}$ an \textit{arbitrary} homogeneous polynomial (not necessarily $\sigma$-anti-invariant) each of whose monomials contains indeterminates raised to the power 4 at most. For $I \subset \{0, \ldots , 5\}$ a subset and $a = (a_0, \ldots , a_5)$ an exponent tuple, we use the notation $w_I(a) \coloneqq \sum_{i \in I} a_i$; we also write $\nu_I \coloneqq \sum_{i \in I} Z_i \cdot \frac{\partial}{\partial Z_i}$ for the ``partial Euler'' vector field determined by $I$. In this section, we use the notational abbreviation $v(F) \coloneqq dF(v) = \sum_{i = 0}^5 v_i \cdot \frac{\partial F}{\partial Z_i}$.

\begin{construction}[Solution For Jacobian-Reduced Input] \label{reduced}
\textit{Input.} An integer $q \in \{1, 2\}$ and a polynomial $A \in R^{6 \cdot q}$ whose monomials' exponents are all at most 4. \\
\textit{Output.} A solution $v = (v_0, \ldots , v_5)$ solving \eqref{thing} for $A$ and for $F(U_0, U_1, U_2, V_0, V_1, V_2)$ the Fermat sextic.
\begin{itemize}
\item Initialize $v \coloneqq 0$.
\item \textbf{for each} monomial $c_a \cdot Z^a$ in $A$ \textbf{do}:
\begin{itemize}
\item Find a nonempty and proper subset $I \subset \{0, \ldots , 5\}$ such that $w_I(a) + |I| = 6$.
\item Update $v \mathrel{+}= c_a \cdot Z^a \cdot \frac{\nu_I}{q \cdot \nu_I(F)}$.
\end{itemize}
\item Output $v$.
\end{itemize}
\end{construction}

\begin{theorem} \label{reduced_proof}
For $F(U_0, U_1, U_2, V_0, V_1, V_2)$ the Fermat sextic, each $q \in \{1, 2\}$, and each input $A \in R^{6 \cdot q}$ fulfilling the requirement set forth in Construction \ref{reduced}, that construction's output $v = (v_0, \ldots , v_5)$ solves \eqref{thing}.
\end{theorem}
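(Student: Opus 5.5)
The plan is to reduce to a single monomial by linearity and then verify \eqref{thing} for each monomial summand by a direct calculation. The expression $A - q \cdot dF(v) + F \cdot \operatorname{div}(v)$ is $\mathbb{C}$-linear in the pair $(A, v)$, and divisibility by $F^{q + 1}$ in $\mathbb{C}[Z_0, \ldots , Z_5]_{(F)}$ is preserved under sums. So it suffices to show that for a single monomial $A = Z^a$, with each $a_i \leq 4$ and $\sum_i a_i = 6 \cdot q$, the summand $v = Z^a \cdot \frac{\nu_I}{q \cdot \nu_I(F)}$ solves \eqref{thing}.

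First I would check that this summand is well defined and legitimate.
\begin{itemize}
\item A nonempty proper $I \subset \{0, \ldots , 5\}$ with $w_I(a) + |I| = 6$ must exist. Setting $b_i \coloneqq a_i + 1 \in \{1, \ldots , 5\}$, this asks for a subfamily of the $b_i$ summing to $6$, where $\sum_i b_i = 6q + 6 \in \{12, 18\}$. This is the combinatorial fact flagged in the introduction (Lemma \ref{combinatorial}), which I would prove by finite enumeration of the multisets $\{b_i\}$. I expect this to be the main point needing care, though it involves no theory.
\item For the Fermat sextic, $\nu_I(F) = 6 \cdot G_I$ with $G_I \coloneqq \sum_{i \in I} Z_i^6$. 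Since $I$ is nonempty, $G_I \neq 0$. Since $I$ is proper, $G_I$ is a nonzero sextic distinct from any scalar multiple of $F$, so $G_I \notin (F)$ and the denominator is admissible.
\item The components $Z^a Z_i / (6q \cdot G_I)$ have degree $6q + 1 - 6 = 6q - 5$, as required.
\end{itemize}

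Next I would compute the two terms of \eqref{thing}. Write $g \coloneqq Z^a / (q \cdot \nu_I(F))$, so that $v = g \cdot \nu_I$.
\begin{itemize}
\item We have $q \cdot dF(v) = q \cdot g \cdot \nu_I(F) = Z^a = A$, so the first two terms cancel exactly.
\item Since $\operatorname{div}(\nu_I) = |I|$, we get $\operatorname{div}(v) = \nu_I(g) + |I| \cdot g$.
\item The vector field $\nu_I$ acts on any monomial as multiplication by its $I$-weight. Hence $\nu_I(Z^a) = w_I(a) \cdot Z^a$ and $\nu_I(G_I) = 6 \cdot G_I$, which gives $\nu_I(g) = (w_I(a) - 6) \cdot g$.
\item Therefore $\operatorname{div}(v) = (w_I(a) + |I| - 6) \cdot g = 0$ by the choice of $I$.
\end{itemize}
The whole expression $A - q \cdot dF(v) + F \cdot \operatorname{div}(v)$ therefore vanishes identically, so \eqref{thing} holds trivially, and summing over monomials gives the theorem.

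Nothing here uses $q \in \{1, 2\}$ or the exponent bound $a_i \leq 4$ except through the existence of $I$. So the only real obstacle is that combinatorial existence step, which is exactly where Remark \ref{fails} says the argument breaks down for $q > 2$.
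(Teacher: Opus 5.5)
Your proposal is correct and follows the paper's proof essentially step for step: reduction to monomials by linearity, the subset-sum existence of $I$ (Lemma \ref{combinatorial}, which the paper also settles by a finite case check), admissibility of the denominator $\nu_I(F)$, and the partial-Euler/quotient-rule computation giving $q \cdot dF(v_a) = Z^a$ and $\operatorname{div}(v_a) = 0$ (Lemma \ref{quotient}). The one difference is how you show $\nu_I(F) \notin (F)$: you check directly that $\nu_I(F) = 6 \sum_{i \in I} Z_i^6$ is not proportional to $F$, whereas the paper's Lemma \ref{lie} argues via the finiteness of the group of linear automorphisms of $X$; your check is more elementary and suffices for the Fermat sextic, while the paper's argument works for any smooth hypersurface of degree at least 3.
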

\begin{proof}
We proceed with the aid of a handful of lemmas.
The following lemma shows that the key line of Construction \ref{reduced} will actually succeed, so that that construction makes sense.
\begin{lemma} \label{combinatorial}
For each $q \in \{1, 2\}$ and each term $a = (a_0, \ldots , a_5)$ such that $a_i \in \{0, \ldots ,4 \}$ for each $i \in \{0, \ldots , 5\}$ and $\sum_{i = 0}^5 a_i = 6 \cdot q$, there exists a nonempty proper subset $I \subset \{0, \ldots , 5\}$ such that $w_I(a) + |I| = 6$.
\end{lemma}
\begin{proof}
The lemma's statement can easily be checked by brute enumeration. We record a minimal proof. Up to defining $b_i \coloneqq a_i + 1$ for each $i \in \{0, \ldots , 5\}$, we may equivalently ask instead whether each tuple $b = (b_0, \ldots , b_5)$ of \textit{positive} integers $b_i \in \{1, \ldots , 5\}$ whose sum is $6 \cdot (q + 1)$ admits a subset sum $\sum_{i \in I} b_i = 6$, for some index subset $I \subset \{0, \ldots , 5\}$ (itself necessarily nonempty and proper).

\paragraph{Case $q = 1$.} We proceed by case enumeration. If $(b_0, \ldots , b_5)$ contains a 5, then it necessarily also contains a 1 (or else its sum would be at least $5 + 5 \cdot 2 = 15 > 12$); these two elements yield $I$. Similarly, if $\max(b_0, \ldots , b_5) = 4$, then $b$ necessarily contains at least \textit{two} 1s (or else its sum would be at least $4 + 1 + 4 \cdot 2 = 13 > 12$); the resulting three elements work. In the case $\max b = 3$, if $b$ contains a $2$, then it must also contain a $1$ (or else its sum would be at least $3 + 5 \cdot 2 = 13 > 12$), so that any subset consisting of a $3$, a $2$ and a $1$ works; otherwise, $b$ consists of exactly three $3$s and exactly three $1$s, and any subset consisting of two of the three $3$s suffices. Finally, if $\max b = 2$, then we must have $b = (2, 2, 2, 2, 2, 2)$ and any size-3 subset $I$ suffices.

\paragraph{Case $q = 2$.} We again enumerate cases. If $(b_0, \ldots , b_5)$ contains both a $5$ and a $1$, then we're done. If $\max b = 5$ and $b$ \textit{doesn't} contain a $1$, then the remaining five items are in $\{2, 3, 4, 5\}$, and further add to $13$. Each latter such sequence (in fact, there are only three, up to permutation) necessarily contains either three $2$s or two $3$s; these suffice for $I$. In the case $\max b \leq 4$, if there are three $2$s, a $4$ and a $2$, or two $3$s, then we're done. If none of these conditions hold, then the counts $c_j \coloneqq \left| \set*{i \in \{0, \ldots , 5\} ; b_i = j} \right|$ are such that $c_3 \leq 1$ and $c_2 \leq 2$, as well as that $c_4$ and $c_2$ aren't \textit{both} positive. Finally, $\sum_{i = 1}^4 c_i = 6$ and $\sum_{i = 1}^4 i \cdot c_i = 18$. Subtracting thrice the first of these two equations from the second, we obtain $c_4 = 2 \cdot c_1 + c_2$. If $c_2 > 0$ held, then we'd obtain $0 = c_4 = 2 \cdot c_1 + c_2 > 0$; we thus conclude $c_2 = 0$, whence $c_4 = 2 \cdot c_1$, so that the first sum above yields $3 \cdot c_1 + c_3 = 6$. Using $c_3 \leq 1$, we conclude finally that $c_3 = 0$, whence $c_1 = 2$ and $c_4 = 4$. In other words, $b$ consists of four $4$s and two $1$s; taking any of the $4$s together with the two $1$s suffices for $I$. 

These two cases taken together suffice to establish the lemma.
\end{proof}

The following lemma shows that the denominator $\nu_I(F)$ is allowed.
\begin{lemma} \label{lie}
For each nonempty and proper subset $I \subset \{0, \ldots , 5\}$, $\nu_I(F) \notin (F)$.
\end{lemma}
\begin{proof}
We write $\operatorname{Lin} X \subset \text{PGL}_6(\mathbb{C})$ for the group of projective linear automorphisms of $X$. It is classical that this very group is finite (see e.g.\ Poonen \cite[Thm.~1.3]{Poonen:2005aa}). We conclude that the Lie algebra $\operatorname{Lie}(\operatorname{Lin}\, X) = 0$.

For contradiction, we assume that $I \subset \{0, \ldots , 5\}$, nonempty and proper, is such that $\nu_I(F) \in (F)$. For degree reasons, we necessarily have $\nu_I(F) = c \cdot F$ for some constant $c$. We denote by $(\varepsilon_0, \ldots , \varepsilon_5) \coloneqq \left( \mathbf{1}_I(0), \ldots , \mathbf{1}_I(5) \right)$ the membership indicators of $I$. The diagonal matrix $A_I \coloneqq \text{diag}\left(\varepsilon_0, \ldots , \varepsilon_5\right)$ in $\mathfrak{gl}_6(\mathbb{C})$ yields the one-parameter subgroup $g_t \coloneqq \operatorname{exp}(t \cdot A_I) = \operatorname{diag} \left( e^{\varepsilon_0 \cdot t}, \ldots , e^{\varepsilon_5 \cdot t} \right)$ of $\text{GL}_6(\mathbb{C})$. Moreover, writing $F_t(Z) \coloneqq F(g_t \cdot Z)$, we obtain from the chain rule that
\begin{equation*}\frac{d}{dt} F_t(Z) = \sum_{i = 0}^5 \varepsilon_i \cdot e^{\varepsilon_i \cdot t} \cdot Z_i \cdot \frac{\partial F}{\partial Z_i}(g_t \cdot Z) = \sum_{i \in I} \left( g_t \cdot Z \right)_i \cdot \frac{\partial F}{\partial Z_i}(g_t \cdot Z) = \nu_I(F)(g_t \cdot Z) = c \cdot F_t(Z);\end{equation*}
the last equality is our hypothesis on $I$. In particular, $F(g_t \cdot Z) = e^{c \cdot t} \cdot F(Z)$ holds for each $t$. We see that $g_t$ preserves the cone in $\mathbb{C}^6$ over $X$, and hence that $[g_t] \in \operatorname{Lin}\, X$ for each $t$.

Equivalently, the class of $A_I$ in $\mathfrak{pgl}_6(\mathbb{C})$ is tangent at the identity to a one-parameter subgroup of $\operatorname{Lin}\, X$, and so lies in $\operatorname{Lie}(\operatorname{Lin}\, X)$. On the other hand, this class is nonzero; indeed, since $I$ is nonempty and proper, $A_I$ is not a scalar matrix. This fact contradicts our initial conclusion $\operatorname{Lie}(\operatorname{Lin}\, X) = 0$, established above, and so shows that $\nu_I(F) \notin (F)$.
\end{proof}
A conclusion analogous to Lemma \ref{lie}'s in fact holds for each degree-$d$ hypersurface $X \subset \mathbb{P}^n$ for which $n \geq 2$ and $d \geq 3$.

The next lemma is our key correctness claim.

\begin{lemma} \label{quotient}
Each vector field $v_a \coloneqq Z^a \cdot \frac{\nu_I}{q \cdot \nu_I(F)}$ in Construction \ref{reduced} satisfies $q \cdot v_a(F) = Z^a$ and $\operatorname{div}(v_a) = 0$.
\end{lemma} 
\begin{proof}
To prove the first of the lemma's two assertions, we note that
\begin{equation*}q \cdot dF(v_a) = q \cdot Z^a \cdot \sum_{i \in I} \frac{Z_i}{q \cdot \nu_I(F)} \cdot \frac{\partial F}{\partial Z_i} = Z^a.\end{equation*}
To prove the second, we note first of all the partial Euler relation $\nu_I(Z^a) = \sum_{i \in I} Z_i \cdot \frac{\partial Z^a}{\partial Z_i} = w_I(a) \cdot Z^a$; moreover, for $F(U_0, U_1, U_2, V_0, V_1, V_2)$ the Fermat sextic and each $I \subset \{0, \ldots , 5\}$, we have $\nu_I(\nu_I(F)) = 6 \cdot \nu_I(F)$. Using these observations and the quotient rule, we obtain
\begin{equation*}\operatorname{div}(v_a) = \frac{q \cdot Z^a \cdot \left( \left( w_I(a) + |I| \right) \cdot \nu_I(F) - \nu_I(\nu_I(F)) \right)}{\left(q \cdot \nu_I(F) \right)^2} = \frac{Z^a \cdot \nu_I(F) \cdot \left( w_I(a) + |I| - 6 \right)}{q \cdot \left(\nu_I(F) \right)^2} = 0;\end{equation*}
to achieve the final equality, we use the defining property $w_I(a) + |I| = 6$ of $I$.
\end{proof}
Lemmas \ref{lie} and \ref{quotient} immediately imply that for each $q \in \{1, 2\}$ and each monomial $Z^a$ satisfying the condition put forth in Construction \ref{reduced}, the summand vector field $v_a$ solves \eqref{thing} for $F$ the Fermat sextic on the input $Z^a$. This fact implies the statement of the theorem, by the linearity of \eqref{thing}.
\end{proof}

\begin{remark} \label{fails}
We note that Theorem \ref{reduced_proof}'s Jacobian-reduction assumption---i.e., whereby $a_i \leq 4$ for each $i \in \{0, \ldots , 5\}$---is necessary. 
Indeed, the example $a = (6, 0, 0, 0, 0, 0)$ shows that, already in the case $q = 1$, the conclusion of Lemma \ref{combinatorial} fails if the assumption $a_i \leq 4$ is relaxed. If that assumption weren't necessary, then the subsection \textit{after} this one would be unnecessary. We also note that the technique of this work seems not to work unchanged for higher-dimensional Calabi--Yau hypersurfaces. Indeed, to prove the analogue of this work's result for the Fermat sixfold $X \subset \mathbb{P}^7$ of degree 8, we would need to find, for each $q \in \{1, 2, 3\}$ and each tuple $a = (a_0, \ldots , a_7)$ such that $a_i \in \{0, \ldots, 6\}$ holds for each $i \in \{0, \ldots , 7\}$ and $\sum_{i = 0}^7 a_i = 8 \cdot q$, a nonempty proper subset $I \subset \{0, \ldots , 7\}$ such that $\sum_{i \in I} a_i + |I| = 8$. Already in the case $q = 2$, this becomes impossible; for example, the tuple $a = (2, 2, 2, 2, 2, 2, 2, 2)$ shows as much (no subset of $b = (3, 3, 3, 3, 3, 3, 3, 3)$ sums to $8$). Finally, Lemma \ref{combinatorial} fails if we allow $q = 3$. Indeed, $a = (3, 3, 3, 3, 3, 3)$ is a counterexample (no subset of $b = (4, 4, 4, 4, 4, 4)$ sums to 6). This last fact shows that we really must use Hodge symmetry; our method can't explicitly treat all indices $q \in \{0, \ldots , 4\}$. Separately, we note that we have not assumed $A$ $\sigma$-anti-invariant at all in this subsection; that assumption is used crucially in the sequel (see Lemma \ref{final}).
\end{remark}

\subsection{The Full Algorithm} \label{full}

In this subsection, we fully solve \eqref{thing} for $F$ the Fermat sextic, making use of the algorithm of Subsection \ref{reduced_subsection}. We fix $F(U_0, U_1, U_2, V_0, V_1, V_2) = U_0^6 + U_1^6 + U_2^6 + V_0^6 + V_1^6 + V_2^6$ for the entirety of this subsection.

For preliminaries in multivariate division, we refer to von zur Gathen and Gerhard \cite{Gathen:2013aa}. In particular, in the construction below, we fix an \textit{arbitrary} monomial order on $\mathbb{C}[Z_0, \ldots , Z_5]$ (see \cite[Def.~21.4]{Gathen:2013aa}); we moreover make use of multivariate division with remainder (see \cite[Alg.~21.11]{Gathen:2013aa}). 

We introduce a convenient notational device. For each $q > 0$, we write $T : R^{6 \cdot q} \to R^{6 \cdot (q - 1)}$ for the map that, on input $A \in R^{6 \cdot q}$, first multivariate-divides $\frac{A}{q}$ with respect to $\left( \frac{\partial F}{\partial Z_0}, \ldots , \frac{\partial F}{\partial Z_5} \right)$, so obtaining $\frac{A}{q} \eqqcolon \sum_{i = 0}^5 v'_i \cdot \frac{\partial F}{\partial Z_i} + r$ say, and then outputs $\operatorname{div}(v')$. We write $T^{(q)}$ for the $q$-fold iterate of $T$.

In the following lemma, we show that each $\sigma$-anti-invariant polynomial $A$ iterates under $T$ to 0.
\begin{lemma} \label{final}
For each integer $q \geq 0$ and each $\sigma$-anti-invariant polynomial $A \in R^{6 \cdot q}$, we have $T^{(q)}(A) = 0$.
\end{lemma}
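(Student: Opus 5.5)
The plan is to compute $T$ explicitly on monomials, using the fact that for the Fermat sextic every divisor in the multivariate division is a monomial. This will show that $T^{(q)}$ sends each monomial $Z^a$ to a constant depending only on $a$ \emph{up to permutation of coordinates}. Anti-invariance under $\sigma$ then forces the total to cancel.

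\textbf{Step 1: $T$ acts monomial by monomial.} Since $\frac{\partial F}{\partial Z_i} = 6 Z_i^5$, each divisor is a single monomial. Division with remainder therefore never produces cross terms: each term of $\frac{A}{q}$ is handled independently, regardless of the monomial order. Concretely, consider a term $c \cdot Z^a$. If no $a_i \geq 5$, the term goes to the remainder $r$ and contributes nothing to $v'$. Otherwise, let $i$ be the first index in the divisor list with $a_i \geq 5$. Then the term contributes $v'_i = \frac{c}{6q} Z^{a - 5e_i}$, whose divergence is $\frac{c\,(a_i - 5)}{6q} Z^{a - 6e_i}$. Hence $T$ is linear, and sends each monomial to a scalar multiple of a monomial (possibly zero). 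I should double-check that the algorithm's choice of index depends only on the current monomial. This is true for the standard division algorithm, and in any case only the chain of exponents matters below.

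\textbf{Step 2: evaluate $T^{(q)}(Z^a)$ for $a$ of degree $6q$.} Iterating $T$ subtracts $6e_{i}$ at each step, for some $i$ with current exponent at least $5$. The result is therefore $0$ unless the process runs all $q$ steps and reaches the zero exponent. If an exponent ever equals $5$ when selected, the factor $(a_i - 5)$ kills the term. Exponents that are not multiples of $6$ eventually get stuck below $5$ but nonzero, or pass through $5$. So $T^{(q)}(Z^a) = 0$ unless $a = 6b$ with $\sum b_i = q$. In that case coordinate $i$ is selected exactly $b_i$ times, with successive factors $6b_i - 5,\ 6b_i - 11, \ldots, 1$. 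The denominators are $6q, 6(q-1), \ldots, 6$, in that order, whatever interleaving of indices occurs. Hence
\begin{equation*}
T^{(q)}(Z^{6b}) = \frac{1}{6^q\, q!} \prod_{i=0}^{5} \prod_{j=1}^{b_i} (6j - 5),
\end{equation*}
which is independent of the order of choices. It is also visibly invariant under any permutation of $b$, in particular under $\sigma$.

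\textbf{Step 3: use anti-invariance.} Write $A = \sum_a c_a Z^a$. Anti-invariance means $c_{\sigma(a)} = -c_a$; in particular $c_a = 0$ whenever $\sigma(a) = a$. By linearity and Step 2,
\begin{equation*}
T^{(q)}(A) = \sum_b c_{6b}\, \kappa(b), \qquad \kappa(b) = \kappa(\sigma b).
\end{equation*}
The terms pair off as $b \leftrightarrow \sigma b$ and cancel, while the $\sigma$-fixed terms vanish, so $T^{(q)}(A) = 0$. The case $q = 0$ is trivial, since an anti-invariant constant is $0$.

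The main obstacle is Step 2's order-independence. The division step picks an index asymmetrically (the "first" one in the list), so $T$ itself is not $\sigma$-equivariant. The argument must therefore establish that the final scalar depends only on the multiset of exponents, not on which index was chosen at each stage. The bookkeeping above — the denominators depend only on the step count, and each coordinate's numerators form a fixed chain — is exactly what makes this work.
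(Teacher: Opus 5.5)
Your proposal is correct and follows essentially the same route as the paper: $T$ acts monomial by monomial because each $\partial F/\partial Z_i = 6Z_i^5$ is a monomial, only the terms $Z^{6b}$ with $\sum b_i = q$ survive all $q$ iterations, their final value (your $\frac{1}{6^q q!}\prod_i\prod_{j=1}^{b_i}(6j-5)$, which agrees with the paper's $\lambda(b)$) is permutation-invariant, and the $\sigma$-paired terms therefore cancel. Your degree-counting reason for why only $a = 6b$ survives (after $q$ steps the exponent vector must be $0$) is a slightly cleaner variant of the paper's $a_i = 6b_i + r_i$ bookkeeping, and your explicit check of order-independence spells out what the paper leaves implicit.
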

\begin{proof}
Because each $\frac{\partial F}{\partial Z_i} = 6 \cdot Z_i^5$ is a monomial, the action of multivariate division proceeds independently on each monomial of $A$. We thus first study the case in which $A = Z^a = Z_0^{a_0} \cdot \cdots Z_5^{a_5}$ is a monomial. In this case, the composed mapping $T : A \mapsto v' \mapsto \operatorname{div}(v')$ operates as follows. If $a_i \leq 4$ for each $i \in \{0, \ldots , 5\}$, then $T : Z^a \mapsto (0, \ldots , 0) \mapsto 0$; otherwise, writing $i(a) \coloneqq \min\set*{i \in \{0, \ldots , 5 \}; a_i \geq 5}$, we obtain
\begin{equation*}T : Z^a \mapsto \left( 0, \ldots , \frac{Z^{a - 5 \cdot e_{i(a)}}}{6 \cdot q}, \ldots , 0 \right) \mapsto \frac{a_{i(a)} - 5}{6 \cdot q} \cdot Z^{a - 6e_{i(a)}}\end{equation*}
unless $a_{i(a)} = 5$, in which case again $T(Z^a) = 0$. Above, we write $e_{i(a)}$ for the $i(a)$\textsuperscript{th} standard basis vector.
Decomposing, for each $i \in \{0, \ldots , 5\}$, $a_i \eqqcolon 6 \cdot b_i + r_i$, where $r_i \in \{0, \ldots , 5\}$, we see that $Z^a$ survives $T$ for at most $\sum_{i = 0}^5 b_i$ iterations of that map (and no more).

If, for some $i \in \{0, \ldots , 5\}$, $r_i > 0$ holds, then from $6 \cdot q = \sum_{i = 0}^5 a_i = 6 \cdot \sum_{i = 0}^5 b_i + \sum_{i = 0}^5 r_i$, we deduce that $\sum_{i = 0}^5 b_i < q$, so that $Z^a$ does \textit{not} survive $q$ iterations of the map. In other words, each term that survives all $q$ iterations necessarily takes the form $Z^{6 \cdot b} = Z_0^{6 \cdot b_0} \cdot \cdots Z_5^{6 \cdot b_5}$, where now $\sum_{i = 0}^5 b_i = q$ (and in fact, this condition is sufficient). Moreover, for each such term, the final value that term attains is a constant, equal to
\begin{equation*}\lambda(b) \coloneqq \frac{1}{q!} \cdot \prod_{i = 0}^5 \underbrace{\left( \frac{6 \cdot b_i - 5}{6} \cdot \frac{6 \cdot b_i - 11}{6} \cdot \cdots \frac{1}{6} \right)}_{\text{$b_i$ terms in product}} = \frac{1}{q!} \cdot \prod_{i = 0}^5 \prod_{j = 0}^{b_i - 1} \frac{6 \cdot j + 1}{6}.\end{equation*}
This expression is clearly permutation-invariant in the tuple $b = (b_0, \ldots , b_5)$. On the other hand, by hypothesis, $A$'s nonzero monomials can be partitioned into pairs of the form $c \cdot Z^a - c \cdot Z^{a'}$, where the tuples $a$ and $a'$ constitute a $\sigma$-orbit, and so \textit{a fortiori} differ by a permutation. Each pair of that form either dies before the $q$\textsuperscript{th} iteration, or else yields $c \cdot \lambda(b) - c \cdot \lambda(b') = 0$; here, we write $a \eqqcolon 6 \cdot b$ and $a' \eqqcolon 6 \cdot b'$, and use the permutation-invariance of $\lambda$. We conclude that $A$ itself dies; this completes the proof of the lemma.
\end{proof}

We now present the main construction of this subsection.

\begin{construction}[Solution For Fermat Sextic] \label{full_construction}
\textit{Input.} An integer $q \in \{0, 1, 2\}$ and a polynomial $A \in R^{6 \cdot q}$ whose $q$-fold iterate satisfies $T^{(q)}(A) = 0$. \\  
\textit{Output.} A solution $v = (v_0, \ldots , v_5)$ solving \eqref{thing} for $A$ and for $F(U_0, U_1, U_2, V_0, V_1, V_2)$ the Fermat sextic.
\begin{itemize}
\item \textbf{if} $q = 0$ \textbf{then return} $v \coloneqq (0, 0, 0, 0, 0, 0)$.
\item Using multivariate division with respect to $\left( \frac{\partial F}{\partial Z_0}, \ldots , \frac{\partial F}{\partial Z_5} \right)$, decompose $\frac{A}{q} \eqqcolon \sum_{i = 0}^5 v'_i \cdot \frac{\partial F}{\partial Z_i} + r$.
\item Call this construction recursively on the inputs $q - 1$ and $\operatorname{div}(v')$, so obtaining $v''$, say.
\item Call Construction \ref{reduced} on the inputs $q$ and $q \cdot r$, so obtaining $v'''$, say.
\item Output $v \coloneqq v' + F \cdot v'' + v'''$.
\end{itemize}
\end{construction}

\begin{theorem} \label{final_correctness}
Construction \ref{full_construction} is correct; that is, on input an integer $q \in \{0, 1, 2\}$ and a polynomial $A \in R^{6 \cdot q}$ satisfying $T^{(q)}(A) = 0$, that construction's output $v$ solves \eqref{thing} for $A$, and for $F$ the Fermat sextic.
\end{theorem}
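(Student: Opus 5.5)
Argue by induction on $q$. The base case $q = 0$ is immediate: there \eqref{thing} reads $F \mid A$ in the local ring with $A$ a constant, and the hypothesis $T^{(0)}(A) = A = 0$ makes $v = 0$ a solution. For $q \in \{1, 2\}$, write $\frac{A}{q} = \sum_i v'_i \cdot \frac{\partial F}{\partial Z_i} + r$ as in Construction \ref{full_construction}. Then $T(A) = \operatorname{div}(v') \in R^{6(q-1)}$ and $T^{(q-1)}(\operatorname{div}(v')) = T^{(q)}(A) = 0$, so the recursive call is legitimate. By the inductive hypothesis, $v''$ satisfies $F^{q} \mid \operatorname{div}(v') - (q-1) \cdot dF(v'') + F \cdot \operatorname{div}(v'')$. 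Since each $\frac{\partial F}{\partial Z_i} = 6 Z_i^5$ is a monomial, the remainder $r$ has all exponents at most $4$, and it is homogeneous of degree $6q$. Hence Construction \ref{reduced} applies to $q \cdot r$, and Lemma \ref{quotient} gives $q \cdot dF(v''') = q \cdot r$ and $\operatorname{div}(v''') = 0$. Note that $v'$ is polynomial, $v''$ has denominators outside $(F)$ by induction, and $v'''$ has denominators $\nu_I(F) \notin (F)$ by Lemma \ref{lie}; so all computations take place in the local ring $\mathbb{C}[Z]_{(F)}$.

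Next, expand $E \coloneqq A - q \cdot dF(v) + F \cdot \operatorname{div}(v)$ for $v = v' + F v'' + v'''$. Using $\operatorname{div}(F v'') = dF(v'') + F \operatorname{div}(v'')$ and $dF(v') = \frac{A}{q} - r$, we get
\begin{align*}
E &= A - q\left(\tfrac{A}{q} - r\right) - qF\, dF(v'') - q r + F \operatorname{div}(v') + F\left(dF(v'') + F \operatorname{div}(v'')\right) + F \cdot 0 \\
&= F \cdot \left( \operatorname{div}(v') - (q-1)\, dF(v'') + F \operatorname{div}(v'') \right).
\end{align*}
The inductive hypothesis says the bracket is divisible by $F^{q}$, so $F^{q+1} \mid E$, which is \eqref{thing} at level $q$.

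The only delicate point is the bookkeeping of the coefficients $q$ versus $q - 1$, which must cancel exactly so that $-q\,F\,dF(v'') + F\,dF(v'')$ yields the coefficient $-(q-1)$ required at level $q - 1$. This is where the choice of $F v''$, rather than some other scaling, matters. I would also check degrees: $v'$ has degree $6q - 5$, $v''$ has degree $6(q-1) - 5$, so $F v''$ has degree $6q - 5$, and $v'''$ has degree $6q - 5$; thus $v$ has the form demanded by Question \ref{question}.
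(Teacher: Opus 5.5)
Your proposal is correct and follows the paper's proof. Both argue by induction on $q$, justify the recursive call via $T^{(q-1)}(\operatorname{div}(v')) = T^{(q)}(A) = 0$, and expand $A - q\,dF(v) + F\operatorname{div}(v)$ for $v = v' + F v'' + v'''$ into $F$ times the level-$(q-1)$ expression. Your extra checks (the remainder $r$ has all exponents at most $4$, the denominators lie outside $(F)$, and the degrees match) make explicit what the paper leaves implicit or defers to Remark~\ref{suitable}.
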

\begin{proof}
We prove the theorem by induction. In the base case $q = 0$, we have $T^{(0)}(A) = A = 0$ by hypothesis on $A$; the construction's return value $v \coloneqq (0, 0, 0, 0, 0, 0)$ is thus tautologically correct in that case.

We now let $q \in \{1, 2\}$. We note that, if $A$ satisfies the hypothesis of the construction, then the recursive input $\operatorname{div}(v')$ also does, since $0 = T^{(q)}(A) = T^{(q - 1)} \left( \operatorname{div}(v') \right)$. We may thus invoke the theorem inductively. In this way, we conclude that the recursively constructed quantity $v''$ satisfies
\begin{equation*}F^q \cdot B' = \operatorname{div}(v') - (q - 1) \cdot dF(v'') + F \cdot \operatorname{div}(v'')\end{equation*}
for some homogeneous rational factor $B'$ of degree $-6$ (in fact, $B'$ will be 0 for us). Moreover, by Theorem \ref{reduced_proof}, we have the guarantees $q \cdot dF(v''') = q \cdot r$ (i.e., $dF(v''') = r$) and $\operatorname{div}(v''') = 0$. Using these facts, together with the expression $v \coloneqq v' + F \cdot v'' + v'''$ and linearity, we obtain
\begin{align*}
A - q \cdot dF(v) + F \cdot \operatorname{div}(v) &= A - q \cdot \bigl( dF(v') + F \cdot dF(v'') + dF(v''') \bigr) + F \cdot \bigl( \operatorname{div}(v') + F \cdot \operatorname{div}(v'') + dF(v'') \bigr) \\
&= F \cdot \bigl( \operatorname{div}(v') - (q - 1) \cdot dF(v'') + F \cdot \operatorname{div}(v'') \bigr) \tag{using $A = q \cdot \left( dF(v') + r \right)$.} \\
&= F \cdot F^q \cdot B' \tag{by the recursive identity above.} \\
&= F^{q + 1} \cdot B',
\end{align*}
a calculation that completes the proof of the theorem.
\end{proof}

Combining Lemma \ref{final} and Theorem \ref{final_correctness}, we deduce that Construction \ref{full_construction} solves \eqref{thing} for each $q \in \{1, 2\}$, each $\sigma$-anti-invariant $A \in R^{6 \cdot q}$, and $F$ the Fermat sextic.

\begin{remark} \label{symmetrization}
While it would have been nice to prove Lemma \ref{final} by arguing directly that the operation $T : A \mapsto \operatorname{div}(v')$ preserves anti-invariance, it in fact doesn't in general. Thus our recursive calls to Construction \ref{full_construction} indeed supply to that construction arguments $A$ that, in general, are \textit{not} anti-invariant, though they still of course satisfy the requirement put forth by that construction.
At the cost of explicitly anti-symmetrizing $v'$ and $r$ at each level of Construction \ref{full_construction}'s recursion, we could have alternatively bypassed Lemma \ref{final} entirely, and in fact achieved something like that lemma's conclusion for \textit{each} $\sigma$-invariant $F$. Since Construction \ref{reduced} of course needs $F$ to be Fermat, this generalization wouldn't have done much for us, and in any case our approach above lets us write down a simpler algorithm (that anti-symmetrization would have turned out superfluous in the Fermat case, as Lemma \ref{final} shows).
\end{remark}

\begin{remark} \label{suitable}
Lemma \ref{final} in fact has nothing to do with $q$, and works for $q \in \{0, \ldots , 4\}$ arbitrary (or presumably even higher). Theorem \ref{final_correctness}'s proof uses its hypothesis $q \in \{0, 1, 2\}$ only to guarantee that Construction \ref{full_construction}'s calls to Construction \ref{reduced} fulfill the hypothesis of Theorem \ref{reduced_proof} (recall also Remark \ref{fails}). Theorem \ref{final_correctness}'s proof otherwise goes through without issue on arbitrary $q \in \{0, \ldots , 4\}$. In fact, something like Construction \ref{full_construction}---augmented, that is, with the symmetrization step sketched in Remark \ref{symmetrization}---would work for arbitrary $\sigma$-invariant sextic fourfolds $X \subset \mathbb{P}^5$ (or even higher-dimensional varieties, in principle), given a suitable replacement for Construction \ref{reduced}.
\end{remark}

\begin{remark} \label{bls}
Our Construction \ref{full_construction} above is related to Bostan, Lairez and Salvy's \textit{Griffiths--Dwork reduction} \cite[Alg.~1]{Bostan:2013aa}, and can be viewed as an enrichment of that algorithm, albeit specific to our Fermat case (though see Remark \ref{suitable} above). It differs from that algorithm in two important ways. For one, Construction \ref{full_construction}'s calls to Construction \ref{reduced}, and indeed that entire latter construction, are new. Further, we prove (i.e., in Lemma \ref{final}) that our algorithm's base case input vanishes (at least if its initial input $A$ is anti-invariant). Bostan, Lairez and Salvy's algorithm \cite[Alg.~1]{Bostan:2013aa} instead accumulates all of its remainders $r$, as well as its base input, and simply reports that $\omega_A$ is exact on $\mathbb{P}^n \setminus X$ if and only if all of these quantities vanish \cite[Thm.~1]{Bostan:2013aa}. In our setting, we must rather decide whether $\omega_A$ becomes exact (up to holomorphic error) on some nontrivial proper open subset $U \setminus X$ of $\mathbb{P}^n \setminus X$; this question is outside the scope of \cite[Alg.~1]{Bostan:2013aa}. 
\end{remark}

\begin{remark}
The solutions to \eqref{thing} that we've managed to attain---i.e., using Construction \ref{full_construction}---solve that equation ``strictly'', in the sense that they cause its right-hand side to vanish entirely (as opposed to merely modulo $F^{q + 1}$). We don't expect that strict solutions should exist for general $\sigma$-invariant sextics $F$; the fact that we haven't managed to find \textit{non-strict} solutions yet suggests that we're leaving something on the table.
\end{remark}

\subsection{The Main Theorem}

This subsection's main theorem assembles the various results we've accumulated.

We begin with a few preliminaries. Given an invertible linear map $M : \mathbb{C}^6 \to \mathbb{C}^6$, we have the usual \textit{pullback} map $M^*$ on $\mathbb{C}[Z_0, \ldots , Z_5]$ (effected by precomposition). Moreover, for $v = (v_0, \ldots , v_5)$ a vector field with components in $\mathbb{C}[Z_0, \ldots , Z_5]_{(F)}$, we define the \textit{pullback} of $v$ as
\begin{equation*}M^*v \coloneqq \begin{bmatrix} \vphantom{\begin{bmatrix} M^*v_0 \\ \vdots \\ M^*v_5 \end{bmatrix}} \mathmakebox[4em][c]{M} \end{bmatrix}^{-1} \cdot \begin{bmatrix} M^* v_0 \\ \vdots \\ M^* v_5\end{bmatrix}.\end{equation*}
This coordinate-change rule coincides with that which transports $v$, interpreted as a \textit{derivation} mapping $\mathbb{C}[Z_0, \ldots , Z_5]_{(F)} \to \mathbb{C}[Z_0, \ldots , Z_5]_{(F)}$, along the isomorphism $M^* : \mathbb{C}[Z_0, \ldots , Z_5]_{(F)} \to \mathbb{C}[Z_0, \ldots , Z_5]_{(M^* F)}$.

This rule plays nicely with the various operations we've been using.
\begin{lemma} \label{pullback}
For each $M$, $F$, and $v$, we have $M^* \left( dF(v) \right) = d\left(M^*F\right) \left( M^*v \right)$ and $M^* \left( \operatorname{div}(v) \right) = \operatorname{div}\left( M^* v \right)$.
\end{lemma}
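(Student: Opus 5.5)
The plan is to prove both identities by direct coordinate computation, treating $M^*$ as precomposition with the linear map $Z \mapsto MZ$. I write $M = (m_{ij})$, so that $(M^*G)(Z) = G(MZ)$ for every $G$ in the localized ring.

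\textbf{First identity.} I would begin with the chain rule,
\begin{equation*}\frac{\partial (M^*F)}{\partial Z_k} = \sum_i m_{ik} \cdot M^*\!\left(\frac{\partial F}{\partial Z_i}\right).\end{equation*}
In matrix form this reads $\nabla(M^*F) = M^T \cdot M^*(\nabla F)$. By definition, $M^*v = M^{-1} \cdot M^*(v)$, where $M^*(v)$ denotes the componentwise pullback. Then
\begin{equation*}d(M^*F)(M^*v) = \nabla(M^*F)^T \cdot M^{-1} \cdot M^*(v) = M^*(\nabla F)^T \cdot M \cdot M^{-1} \cdot M^*(v) = M^*\!\left(\sum_i v_i \cdot \frac{\partial F}{\partial Z_i}\right),\end{equation*}
which is $M^*(dF(v))$. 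Equivalently, one can use the derivation interpretation noted just before the lemma: $M^*v$ is precisely the derivation $M^* \circ v \circ (M^*)^{-1}$, and applying it to $M^*F$ gives $M^*(v(F))$ at once. I would present the matrix computation and mention the derivation viewpoint as the conceptual reason.

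\textbf{Second identity.} Write $w \coloneqq M^*v$, with components $w_k = \sum_i (M^{-1})_{ki} \cdot M^*v_i$. By the chain rule,
\begin{equation*}\frac{\partial w_k}{\partial Z_k} = \sum_{i,j} (M^{-1})_{ki} \cdot m_{jk} \cdot M^*\!\left(\frac{\partial v_i}{\partial Z_j}\right).\end{equation*}
Summing over $k$ uses $\sum_k m_{jk}(M^{-1})_{ki} = \delta_{ji}$, and leaves $\sum_i M^*\!\left(\frac{\partial v_i}{\partial Z_i}\right) = M^*(\operatorname{div} v)$. In other words, the divergence is the trace of the Jacobian matrix, and that trace is invariant under the conjugation $M^{-1}(\cdot)M$.

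\textbf{Main obstacle.} The only delicate point is getting the conventions to match: the pullback of vectors uses $M^{-1}$, while gradients transform by $M^T$. I would fix the convention $(M^*G)(Z) = G(MZ)$ explicitly at the start. If the paper's intended convention turns out to be the other one, with $M$ and $M^{-1}$ interchanged, the same two computations go through verbatim with the roles swapped. I would also remark that everything stays inside the local rings: $M^*$ maps $\mathbb{C}[Z]_{(F)}$ isomorphically onto $\mathbb{C}[Z]_{(M^*F)}$, so all the rational functions involved remain admissible.
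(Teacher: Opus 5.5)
Your proposal is correct and is essentially the paper's own proof. Both identities are checked by the chain rule in coordinates: the factor $M$ coming from the gradient of $M^*F$ cancels the $M^{-1}$ in the definition of $M^*v$, and the divergence computation collapses via $\sum_k m_{jk}(M^{-1})_{ki} = \delta_{ji}$, exactly as in the paper's argument.
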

\begin{proof}
The first assertion is a basic consequence of the chain rule. Indeed,
\begin{equation*}
d\left(M^*F\right) \left( M^*v \right) = \begin{bmatrix}\horzbar\, \frac{\partial}{\partial Z_i} \left( F \circ M \right) \,\horzbar \end{bmatrix} \cdot \begin{bmatrix} \vphantom{\begin{bmatrix} M^*v_0 \\ \vdots \\ M^*v_5 \end{bmatrix}} \mathmakebox[4em][c]{M} \end{bmatrix}^{-1} \cdot \begin{bmatrix} M^* v_0 \\ \vdots \\ M^* v_5\end{bmatrix} = \begin{bmatrix}\horzbar\, \frac{\partial F}{\partial Z_i} \circ M \,\horzbar\end{bmatrix}\cdot \begin{bmatrix} M^* v_0 \\ \vdots \\ M^* v_5\end{bmatrix} = M^* \left( dF(v) \right).
\end{equation*}
To prove the second assertion, we note that $\operatorname{div}(M^* v) = \sum_i \frac{\partial}{\partial Z_i} \left( \sum_j M^{-1}_{ij} \cdot \left( v_j \circ M \right) \right) =\sum_i \sum_j M^{-1}_{ij} \cdot \frac{\partial}{\partial Z_i} \left( v_j \circ M \right) = \sum_i \sum_j M^{-1}_{ij} \cdot \sum_k \left( \left( \frac{\partial v_j}{\partial Z_k} \circ M\right) \cdot M_{ki} \right) = \sum_j \sum_k \left( \sum_i M_{ki} \cdot M^{-1}_{ij} \right) \cdot \frac{\partial v_j}{\partial Z_k} \circ M = M^* \left( \operatorname{div}(v) \right)$; the last equality amounts to $\sum_i M_{ki} \cdot M^{-1}_{ij} = \delta_{kj}$.
\end{proof}
Lemma \ref{pullback} implies that solutions $F$, $A$ and $v$ to \eqref{thing} transport under pullback, since if $F^{q + 1} \mid A - q \cdot dF(v) + F \cdot \operatorname{div}(v)$, then $M^* F^{q + 1} \mid M^* A - q \cdot d\left( M^* F \right)(M^*v) + M^* F\cdot \operatorname{div}(M^* v)$.

\begin{theorem} \label{last}
Let $f(X_0, X_1, X_2)$ be a ternary sextic of Waring rank 3 whose associated plane curve $C \subset \mathbb{P}^2$ is smooth. Then the smooth sextic $X \subset \mathbb{P}^5$ defined by $f(X_0, X_1, X_2) - f(Y_0, Y_1, Y_2)$ is such that the $\iota$-invariant coniveau-1 Hodge substructure $H^4(X, \mathbb{Q})^+ \subset H^4(X, \mathbb{Q})$ is annihilated outside of a divisor $Y \subset X$.
\end{theorem}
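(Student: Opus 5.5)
The plan is to reduce the statement to Question~\ref{question} for the Fermat sextic, where the earlier constructions already answer it, and then transport the Fermat solution by a $\sigma$-equivariant linear change of coordinates using Lemma~\ref{pullback}.

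\emph{Reduction to $\sigma$.} Write $f = L_0^6 + L_1^6 + L_2^6$ with $L_0, L_1, L_2$ linear forms. Smoothness of $C$ forces the $L_k$ to be linearly independent: otherwise $f$ would depend on at most two variables, and $C$ would be a union of concurrent lines, hence singular. So $N \coloneqq (L_0, L_1, L_2)$ is an element of $\mathrm{GL}_3(\mathbb{C})$. Apply the change of coordinates \eqref{change}. Then $X$ is defined by $F \coloneqq f(U_0,U_1,U_2) + f(V_0,V_1,V_2)$, and $\iota$ becomes $\sigma$. The paragraph preceding Question~\ref{question} already records that, via Theorems~\ref{containment}, \ref{main} and~\ref{grothendieck} together with Lefschetz and Hodge symmetry, the desired annihilation statement is equivalent to an affirmative answer to Question~\ref{question} for this $F$. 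Concretely, we need: for $q\in\{1,2\}$ and each $\sigma$-anti-invariant $A \in R^{6q}$, a rational $v$ with denominators not in $(F)$ satisfying \eqref{thing}. The divisor $Y$ is then $V\cap X$, where $V$ is cut out by the product of all denominators occurring for a basis of the anti-invariant $A$'s in both degrees (together with the auxiliary $D(F_j)$ shrinking from Theorem~\ref{grothendieck}, if that is needed). Since $H^4(X,\mathbb{Q})^+$ is finite-dimensional, a single $Y$ suffices.

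\emph{Transport.} Let $M \coloneqq \operatorname{diag}(N,N)$ acting blockwise on $(U,V)$. Then $M$ commutes with the block swap $\sigma$, and $M^*F_{\mathrm{Fermat}} = F$. Given a $\sigma$-anti-invariant $A$ for $F$, set $A_0 \coloneqq (M^{-1})^*A$. This is again $\sigma$-anti-invariant because $M$ commutes with $\sigma$. Lemma~\ref{final} gives $T^{(q)}(A_0)=0$, so Theorem~\ref{final_correctness} yields $v_0$ solving \eqref{thing} for $(F_{\mathrm{Fermat}}, A_0)$. Its denominators are products of the nonzero polynomials $\nu_I(F_{\mathrm{Fermat}})$, which do not lie in $(F_{\mathrm{Fermat}})$ by Lemma~\ref{lie}, and $F_{\mathrm{Fermat}}$ is irreducible. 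By Lemma~\ref{pullback} and the remark following it, $v \coloneqq M^*v_0$ solves \eqref{thing} for $(M^*F_{\mathrm{Fermat}}, M^*A_0) = (F, A)$. Its denominators are $M^*$ of the old ones, hence not divisible by $M^*F_{\mathrm{Fermat}} = F$.

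\emph{Main obstacle.} The substantive mathematics is already done in the earlier results. The step needing the most care is the bookkeeping in the equivalence: passing from ``$\omega_A|_U - \partial\eta$ holomorphic'' (the sufficiency direction of Theorem~\ref{main}) to the vanishing of the residue class on $X\cap U$, then to Theorem~\ref{containment}'s spanning statement restricted to invariant classes, and finally to rational classes. For the last point I would use that $\sigma$ is defined over $\mathbb{Q}$, so $H^4(X,\mathbb{C})^+ = H^4(X,\mathbb{Q})^+\otimes\mathbb{C}$. I also need that invariant classes are spanned by residues of $\omega_A$ with $A$ anti-invariant, plus $h^2$. For this I would average over $\sigma$: because the residue and Construction~\ref{construction} are $\sigma$-equivariant up to the determinant sign computed earlier, projecting onto the invariant part replaces $A$ by its anti-invariant part. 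The class $h^2$ restricts to zero on the complement of any hyperplane section, so it poses no problem.
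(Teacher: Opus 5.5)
Your proposal is correct and follows essentially the paper's proof. You take the Fermat solution from Lemma~\ref{final} and Theorem~\ref{final_correctness}, transport it via Lemma~\ref{pullback} along the blockwise, $\sigma$-commuting map $\operatorname{diag}(N,N)$, and combine this with the change of coordinates \eqref{change} and the reduction to Question~\ref{question}. The differences are cosmetic: you apply \eqref{change} first, and you pull back $(M^{-1})^*A$ rather than noting that $L^*$ preserves anti-invariants. You also spell out bookkeeping that the paper leaves implicit, namely the denominators not lying in $(F)$ and the choice of a single divisor $Y$.
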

\begin{proof}
We fix $q \in \{0, 1, 2\}$ and let $A \in R^{6 \cdot q}$ be arbitrary and $\sigma$-anti-invariant. By Lemma \ref{final} and Theorem \ref{final_correctness}, Construction \ref{full_construction} yields a vector field $v$ that solves \eqref{thing} for $A$ and for $F(U_0, U_1, U_2, V_0, V_1, V_2)$ the Fermat sextic. Applying our hypothesis on $f$, we write $f(U_0, U_1, U_2) = L_0(U_0, U_1, U_2)^6 + L_1(U_0, U_1, U_2)^6 + L_2(U_0, U_1, U_2)^6$, for linear forms $L_0$, $L_1$ and $L_2$. Since $C$ is necessarily singular at each projective point where the hyperplanes defined by $L_0$, $L_1$, and $L_2$ intersect, we conclude that $L_0$, $L_1$ and $L_2$ are linearly independent.

Pulling back both $v$ and $A$ along the change of coordinates
\begin{equation*}
L :
\left[ \begin{array}{c}
U_0 \\
U_1 \\
U_2 \\
V_0 \\
V_1 \\
V_2
\end{array} \right]
\mapsto
\left[\begin{array}{c|c}
\begin{array}{@{}c@{}}
\horzbar\,L_0\,\horzbar \\
\horzbar\,L_1\,\horzbar \\
\horzbar\,L_2\,\horzbar
\end{array}
&
\\
\hline
&
\begin{array}{@{}c@{}}
\horzbar\,L_0\,\horzbar \\
\horzbar\,L_1\,\horzbar \\
\horzbar\,L_2\,\horzbar
\end{array}
\end{array}\right] \cdot
\left[ \begin{array}{c}
U_0 \\
U_1 \\
U_2 \\
V_0 \\
V_1 \\
V_2
\end{array} \right],
\end{equation*}
we obtain a solution, namely $L^* v$, of \eqref{thing} for $f(U_0, U_1, U_2) + f(V_0, V_1, V_2)$ and $L^* A$. This solves Question \ref{question} for $f(U_0, U_1, U_2) + f(V_0, V_1, V_2)$, since $L^*$ isomorphically maps the space of $\sigma$-anti-invariant polynomials $A$ to itself (in fact, $L$ and $\sigma$ commute).

Pulling back both $L^* v$ and $L^* A$ along \eqref{change}, we obtain a solution, namely $p^* L^* v$, to \eqref{thing} for $f(X_0, X_1, X_2) - f(Y_0, Y_1, Y_2)$ and for $p^* L^* A$. These latter pullbacks exhaust the space of $\iota$-anti-invariant polynomials of degree $6 \cdot q$, since \eqref{change} identifies $\iota$ and $\sigma$. By the discussion at the beginning of Section \ref{solve}, this fact confirms the prediction made by the generalized Hodge conjecture as regards $H^4(X, \mathbb{Q})^+ \subset H^4(X, \mathbb{Q})$.
\end{proof}

\printbibliography

\end{document}